\theoremstyle{plain}
\newtheorem{theorem}{Theorem}[section]
\newtheorem{proposition}[theorem]{Proposition}
\newtheorem{corollary}[theorem]{Corollary}
\newtheorem{lemma}[theorem]{Lemma}
\theoremstyle{definition}
\newtheorem{remark}[theorem]{Remark}
\newtheorem{definition}[theorem]{Definition}
\newcommand{\RR}{{\mathbb R}}
\newcommand{\NN}{{\mathbb N}}
\newcommand{\cal}{\mathcal}
\renewcommand{\epsilon}{\varepsilon}
\newcommand{\dist}{\operatorname{dist}}
\newcommand{\vari}{\operatorname{var}}
\newcommand{\cF}{\EuScript{F}}
\newcommand{\cP}{\EuScript{P}}
\newcommand{\sgn}{\operatorname{sgn}}
\def \NN {{\mathbb N}}
\def \RR {{\mathbb R}}
\def \cA {{\mathcal A}}
\def \cF {{\mathcal F}}
\def \cP {{\mathcal P}}
\def \cS {{\mathcal S}}
\def \cV {{\mathcal V}}
\title[Lorenz:Decay,Hitting,Recurrence]
{Lorenz like flows: exponential decay of correlations for the Poincar\'e map, logarithm law, quantitative recurrence}
\author{S. Galatolo, M.J. Pacifico}
\date{\today}
\begin{document}

\address{S. Galatolo, Dipartimento di Matematica Applicata via Buonarroti 1 Pisa} \email{s.galatolo@docenti.ing.unipi.it}
\urladdr{http://www2.ing.unipi.it/~d80288/}

\address{Maria Jos\'e Pacifico, Instituto de Matem\'atica,
Universidade Federal do Rio de Janeiro,
C. P. 68.530, 21.945-970 Rio de Janeiro, Brazil}
\email{pacifico@im.ufrj.br}

\begin{abstract}

In this paper we prove that the Poincar\'e map associated to  a Lorenz like flow has exponential decay of correlations with respect to Lipschitz observables.
This implies that the hitting time associated to the flow satisfies a logarithm law.
The hitting time $\tau _r(x,x_0)$ is the time needed for the orbit of a point $x$ to enter for the first time in a ball $B_r(x_0)$ centered at $x_0$,
with small radius $r$.
As the radius of the ball decreases to $0$ its asymptotic behavior is
a power law whose exponent is related to the local dimension of the SRB measure at $x_0$:
for each $x_0$ such that the local dimension $d_{\mu}(x_0)$ exists,
\begin{equation*}
\lim_{r\rightarrow 0} \frac{\log \tau _r(x,x_0)}{-\log r} =
d_{\mu}(x_0)-1
\end{equation*}
holds for $\mu$ almost each $x$. In a similar way it is possible to consider a quantitative recurrence indicator quantifying the speed of coming back of an orbit to its starting point. Similar results holds for this recurrence indicator.

 \end{abstract}
\maketitle

\setcounter{tocdepth}{2}
\tableofcontents


\section{Introduction}

It is well known that in a chaotic dynamics the pointwise, future  behavior of an initial condition is unpredictable and even impossible to be described by using a finite quantity of information. On the other hand many of its statistical properties  are rather regular and often described by suitable versions of classical theorems from probability theory: law of large numbers, central limit theorem, large deviations estimations, correlation decay, hitting times,  various kind of quantitative recurrence and so on.

In this article we consider a class of flows which contain the
celebrated Geometric Lorenz flow and we will study some of its
statistical features by a sharp estimation for the decay of
correlations of its first return map on a suitable Poincar\'e
section. This will give a quantitative recurrence estimation and an
estimation for the scaling behavior of the time which is needed to hit small targets (logarithm law).

Let $\Phi^t$ be a $C^1$  flow in ${\mathbb R}^3$. Quantitative recurrence estimations and logarithm laws can be seen in the
following framework: we are interested in a quantitative estimation of the
speed of approaching of a certain orbit $\Phi ^{t}(x)$ (starting from the
point $x$) of the system to a given target point $x_{0}$. Let $B_{r}(x_{0})$
be a ball with radius $r$ centered at $x_{0}$. We consider the time
\[
\tau _{r}(x,x_{0})=\inf \{t\in \mathbb{R}^{+}:\Phi ^{t}(x)\in B_{r}(x_{0})\}
\]
needed for the orbit of $x$ to enter in $B_{r}(x_{0})$ for the first time and the asymptotic behavior of $\tau _{r}(x,x_{0})$ as $r$ decreases
to $0$. Often this is a power law of the type $\tau _{r}\sim r^{-d}$ and
then it is interesting to extract the exponent $d$ by looking at the
behavior of
\begin{equation}
R(x,x_{0})=\underset{r\rightarrow 0}{\lim }\frac{\log \tau _{r}(x,x_{0})}{%
-\log r}.
\end{equation}%
In this way, we have a hitting time indicator for  orbits of the
system.\footnote{ Another way to look at the same phenomena is by considering the behavior of the ratio of the distance $\frac{-\log d(\Phi ^{t}(x),x_{0})}{\log t}$ as $t\rightarrow \infty$ (for the equivalence see \cite{GP97}).}

If the orbit $\Phi ^{t}$ starts at $x_{0}$ itself and we consider the
second entrance time in the ball
\begin{equation}\label{eq-2}
\tau _{r}^{\prime }(x_{0})=\inf \{t\in \mathbb{R}^{+}:\Phi ^{t}(x_0)\in
B_{r}(x_{0}),\exists  i<t , s.t.\Phi ^{i}(x_0)\notin B_{r}(x_{0})\}
\end{equation}%
(because the orbit trivially starts inside the ball) with the same
construction as before, we have a quantitative recurrence indicator. If the
dynamics is chaotic enough, often the above indicators converge to a
quantity which is related to the local dimension of the invariant measure of
the system and in the hitting time case this relation is called {\em logarithm
law}.

Hitting time results of this kind (sometime replacing balls with
other suitable target sets) have been proved in many continuous time
dynamical systems of geometrical interest: geodesic flows, unipotent
flows, homogeneous spaces, etc.  (see e.g.
\cite{AM,HV,KM,Su,M,Mas}). For discrete time systems this kind of
results hold in general if the system has fast enough decay of
correlation (\cite{galatolo}). Mixing is however not sufficient,
since this relation does not hold in some slowly mixing system
having particular arithmetical properties (\cite{GP97}). Some
further connections with arithmetical properties are shown in
interesting examples as rotations and interval exchange maps
(see e.g. \cite{G2,kimseo,KM2}). This kind of problem is also
connected with the so called dynamical Borel Cantelli results (see
\cite{GK} and e.g. \cite{T,HV,GK,Dol}). Moreover, in the symbolic
setting, similar results about the hitting time are used in
information theory (see e.g. \cite{Sh,Ko}). About
quantitative recurrence, our approach follows a set of results
connecting a quantitative recurrence estimation with local dimension
(see e.g. \cite{STV,S06,BS,Bo}). We remark that the speed of
correlation decay for Lorenz like flows is not yet
known (although some are proved to be mixing, see \cite{LMP}) hence 
quantitative recurrence and hitting time results cannot be proved
directly using this tool, instead of this we will consider a
Poincar\'e section, estimating its correlation decay and work with
return times.

\subsection{Statement of results}

Let $I=[-\frac{1}{2},\frac{1}{2}]$ be a unit interval, we consider a
flow $X^{t}$ on $\mathbb{R}^{3}$ having a Poincar\'e section on a
square $\Sigma =I\times I$ satisfying the following properties:

\begin{description}
\item[1)] The flow induces\footnote{Up to zero Lebesgue measure sets.} a first return map $F:\Sigma \rightarrow \Sigma $ of the form $F(x,y)=(T(x),G(x,y))$ (preserves the natural vertical foliation of the
square) and:

\item[1.a)] There is $c\in I$ and $k\geq 0$ such that, if $x_{1},x_{2}$ are such that $c\notin \lbrack x_{1},x_{2}]$ then $\forall y\in
I:|G(x_{1},y)-G(x_{2},y)|\leq k\cdot |x_{1}-x_{2}|$

\item[1.b)] $F|_{\gamma }$ is $\lambda $-Lipschitz with $\lambda <1$ (hence
is uniformly contracting) on each vertical leaf $\gamma $:
$|G(x,y_1)-G(x,y_2)|\leq \lambda \cdot |y_{1}-y_{2}|$

\item[1.c)] $T:I\rightarrow I$ is\ onto and piecewise monotonic, with two $%
C^{1}$ increasing branches on the intervals $[-\frac{1}{2},c)$,$(c,\frac{1}{2%
}]$ and $T^{\prime }>1$ where it is defined\footnote{The condition
  $T^{\prime }>1$ can be relaxed to $\lambda [inf_{x\in I}(T'(x))]^{-1}<1  $
  provided that the map $T$ is eventually expanding in the sense of
 \cite{Vi97}, Chapter 3.}. Moreover $\underset{%
x\rightarrow c^{-}}{\lim }T(x)=\frac{1}{2},T(c)=-\frac{1}{2},$
$\underset{x\rightarrow c}{\lim } T^{\prime }(x)=\infty $.

\item[1.d)] $\frac{1}{|T^{\prime }|}$ has bounded variation.
\end{description}

By the statistical properties of the map $T$, which is piecewise expanding,
under the above assumptions, it turns out that $F$ has a unique SRB measure $ \mu _{F}.$ We then ask the following property for the flow:

\begin{description}
\item[2)] The flow $X^{t}$ is transversal to the section $\Sigma $ and its return time to $\Sigma$ is integrable with respect to $\mu_{F}$.
\end{description}

In Section \ref{sec:constr-geometr-model} we will describe the geometric Lorenz system and we show that it satisfies these properties.

The main results of the paper concern  some statistical properties of $X^{t}$
 and $F$, more precisely:

\textbf{Theorem A (decay of correlation for the Poincar\'e map) }\emph{The
unique SRB measure }$\mu _{F}$ \emph{of }$F$\emph{\ has exponential decay of
correlation with respect to Lipschitz observables.}\vspace{0.2cm}

This result is proved in Section \ref{2p1} (Theorem \ref{resuno}) where the reader can also find a precise definition of correlation decay.
 The proof also  uses a regularity estimation for the invariant measure $\mu _{F}$ which can be found in the Appendix I (Lemma 8.1) and is proved by sort of Lasota-Yorke inequality.
 We remark that a stretched-exponential bound for the decay of correlation for a two dimensional Lorenz like map was given in \cite{Bu83} and \cite{ACS}.

We say that a point $x_0 \in {\mathbb R}^3$ is {\em regular} if there are  $y_0 \in \Sigma $ and $t_0\geq 0 $   such that  $X^{t_0}$ induces a diffeomorphism between a neighborhood of $y_0$ and a neighborhood of $x_0$.

In Section \ref{sec:SBRfluxo} we recall how to construct an
SRB ergodic invariant measure for the flow $X^{t}$ which will be
denoted by $\mu _{X}$. It turns out that this measure has the
following property

\textbf{Theorem B (logarithm law for the flow) }\emph{For each regular } $x_{0}$ \emph{
such that the local dimension }$d_{\mu _{X}}(x_{0})$\emph{\ is defined it
holds }%
\begin{equation}
\underset{r\rightarrow 0}{\lim }\frac{\log \tau _{r}(x,x_{0})}{-\log r}%
=d_{\mu _{X}}(x_{0})-1
\end{equation}%
\emph{for a.e. starting point }$x$\emph{.}

This is proved in Section \ref{sec:loglaw} (Theorem \ref{main1}) and uses the above decay of correlation estimation for the first return map $F$, a result from \cite{galatolo} giving the hitting time estimation for  systems having faster than polynomial  decay of correlations and finally the integrability of return time is used to get the result for the flow.

Using the main result of \cite{S06}, by a similar construction,   if the flow also satisfies the following property

\begin{description}
\item[3)] the map $T$ has  derivative bounded by a power law near $c$: there is a $\beta >0 $ s.t.  $(x-c)^{\beta}T'(x)$ is bounded in a neighborhood of $c$
\end{description}

  we prove (in Section \ref{sec:recurrpoli}, Corollary 7.4) the following estimation for the return time 

\textbf{Theorem C (quantitative recurrence) }\emph{If the flow satisfies conditions 1),2), 3) above, then for a.e. }$x$ \emph{it holds }
\begin{equation}
\underset{r\rightarrow 0}{\lim \sup }\frac{\log \tau _{r}^{\prime }(x)}{%
-\log r}=\overline{d}_{\mu _{X}}(x)-1,\quad \underset{r\rightarrow 0}{\lim \inf }%
\frac{\log \tau _{r}^{\prime }(x)}{-\log r}=\underline{d}_{\mu _{X}}(x)-1.
\end{equation}

In the Appendix II we give an auxiliary result, using
a theorem by Steinberger \cite{S00} showing that the local dimension
is defined a.e. for the Geometric Lorenz system.


\section{Geometric Lorenz model}
\label{sec:Lorenzmodel}

In this section we will introduce and motivate the so-called Geometric
Lorenz system. This is the main example where our results will be
applied. Indeed we will see that assumption 1.a),...,1.d) and 2) of the introduction are verified for this model.
The results in this section are however not strictly necessary
for the proofs of our main theorems. The reader
familiar with the construction of such models can skip it
and start at Section \ref{sec:SBRfluxo}.

In 1963 the meteorologist Edward Lorenz published in the Journal of
Atmospheric Sciences (\cite{Lo63}) an example of a parametrized
$2$-degree polynomial system of differential equations
\begin{align}
   \label{e-Lorenz-system}
\dot x &= a(y - x)&
\quad
&a = 10 \nonumber
\\
\dot y &= rx -y -xz&
\quad
&r =28
\\
\dot z &= xy - bz&
\quad
&  b = 8/3 \nonumber
\end{align}
as a very simplified model for thermal fluid convection,
motivated by an attempt to understand the foundations of
weather forecast.

Numerical simulations performed by Lorenz for an open neighborhood
of the chosen parameters suggested that almost all points in phase space
tend to a {\em chaotic attractor}.

An {\em attractor} is a bounded region in phase-space, invariant
under time evolution, such that the forward trajectories of most
(positive probability) or, even, all nearby points converge to.
And what makes an attractor {\em chaotic} is the fact that trajectories
converging to the attractor are {\em sensitive with respect to
initial data}: trajectories of two any nearby points get apart under
time evolution.

\begin{figure}[h]
  \centering
  \includegraphics[width=9cm]{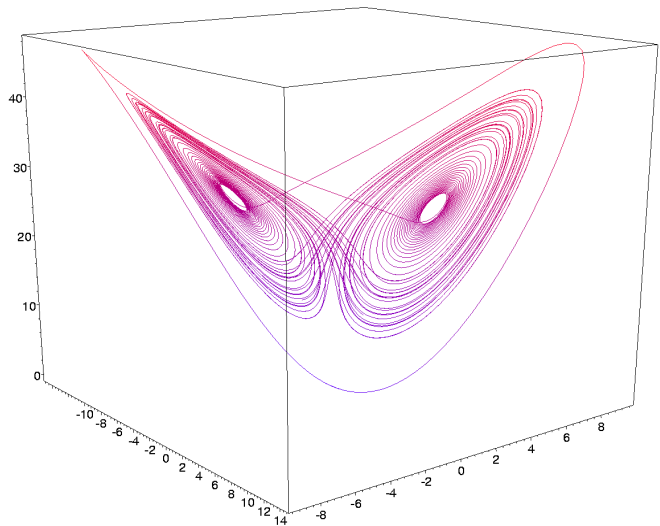}
  \caption{\label{fig-Lorenz3D} Lorenz chaotic attractor}
\end{figure}
Lorenz's equations proved to be very resistant to
rigorous mathematical analysis, and also presented
serious difficulties to rigorous numerical study.
As an example, the above stated existence of a chaotic attractor for the original Lorenz
system where not proved until the year 2000, when Warwick Tucker did it with
a computer aided proof (see~\cite{Tu99,Tu2}).

In order to construct a class of flows having properties which are
very similar to the Lorenz system  and are easier to be studied,
Afraimovich, Bykov and Shil'nikov \cite{ABS77}, and Guckenheimer,
Williams \cite{GW79}, independently constructed the so-called
\emph{geometric Lorenz models} for the behavior observed by Lorenz.
These models are flows in $3$-dimensions for which one can
rigorously prove the existence of a chaotic attractor that contains
an equilibrium point of the flow, which is an accumulation point of
typical regular solutions. Recall that $\gamma$ is a regular
solution for the flow $X^t$ if $X^t(x)\neq x$ for  all $x\in \gamma$ and $t>0$.
 The accumulation of regular orbits
near an equilibrium prevents such sets from being hyperbolic \cite{PM82}.
Furthermore, this attractor is robust (in the $C^1$ topology): it can not be
destroyed by any small perturbation of the original flow.

We point out that the robustness of this example provides an open set of flows
which are not Morse-Smale, nor hyperbolic, and also non-structurally stable (see \cite{PM82,BDV05} ).



\subsection{Construction of the geometric model: near the equilibrium}
\label{sec:near-singul}
\label{sec:constr-geometr-model}

The results of the paper will be given for a class of three dimensional flows which will be defined axiomatically.
To show that these axioms are verified in the geometric Lorenz models we give a detailed introduction to this model.

We first analyze the dynamics in a neighborhood of the
singularity at the origin, and then we complete the flow, imitating the butterfly shape of the original Lorenz flow (see Figure \ref{fig-Lorenz3D} and compare with Figure \ref{L3D}).

In the original Lorenz system the origin $p=0=(0,0,0)$ is an equilibrium of saddle type for the vector field defined by equations (\ref{e-Lorenz-system}) with real eigenvalues $\lambda_i$, $i\leq 3$ satisfying
\begin{equation}
\label{eigenvalues}
0<\frac{\lambda_1}{2}\leq-\lambda_3<\lambda_1<-\lambda_2
\end{equation}
(in the classical Lorenz system  $\lambda_1\approx 11.83$ , $\lambda_2\approx -22.83$, $\lambda_3=-8/3$).

If   certain nonresonance conditions are satisfied (see \cite{St58})
this  vector field is smoothly linearizable in a neighborhood of the
origin. To construct a model which is similar to the original Lorenz
one we start with a linear system $(\dot x, \dot y, \dot
z)=(\lambda_1 x,\lambda_2 y, \lambda_3 z)$, with $\lambda_i$, $1\leq
i\leq 3$ satisfying relation (\ref{eigenvalues}). This vector field
will be considered in the cube $[-1,1]^3$ containing the origin.

 For this linear flow, the trajectories are given by
\begin{align}\label{eq:LinearLorenz}
X^t(x_0,y_0,z_0)=
(x_0e^{\lambda_1t}, y_0e^{\lambda_2t}, z_0e^{\lambda_3t}),
\end{align}
where  $(x_0, y_0, z_0)\in\RR^3$ is an
arbitrary initial point near $p=(0,0,0)$.

Consider $\Sigma=\big\{ (x,y,1) : |x|\le
{\frac{1}{2}},\quad |y|\le{\frac{1}{2}}\big\}$ and
\begin{align*}
\Sigma^-&=\big\{ (x,y,1)\in \Sigma : x<0 \big\},&
\qquad
\Sigma^+&=\big\{ (x,y,1)\in \Sigma : x>0 \big\}\quad\text{and}
\\
\Sigma^*&=\Sigma^-\cup \Sigma^+=\Sigma\setminus\Gamma,& \quad\text{where}\quad
\Gamma&=\big\{(x,y,1)\in \Sigma : x=0 \big\}.
\end{align*}
 $\Sigma$  is a  transverse section to the linear flow and
every trajectory  crosses $\Sigma$ in the direction of
the negative $z$ axis.

Consider also
$\tilde{\Sigma}=\{ (x,y,z) : |x|=1\}=\tilde{\Sigma}^-\cup {\tilde{\Sigma}}^+$ with
${\tilde{\Sigma}}^{\pm}=\{ (x,y,z): x=\pm 1\}$.
For each $(x_0,y_0,1)\in \Sigma^*$ the time $t$
such that $X^{t}(x_0,y_0,1)\in\tilde{\Sigma}$ is given by
\begin{equation}\label{tempo}
t(x_0)=-\frac{1}{\lambda_1}\log{|x_0|}
\end{equation}
 which depends on $x_0\in \Sigma^*$ only and is such that $t(x_0)\to+\infty$
when $x_0\to0$.

 Hence, using (\ref{tempo}), we get
(where $\sgn(x)=x/|x|$ for $x\neq0$ )
\begin{align*}
X^{t(x_0)}(x_0,y_0,1)=
\big( \sgn(x_0),  y_0e^{\lambda_2\cdot t(x_0)},
e^{\lambda_3\cdot t(x_0)}\big)
=
\big( \sgn(x_0),
y_0|x_0|^{-\frac{\lambda_2}{\lambda_1}},
|x_0|^{-\frac{\lambda_3}{\lambda_1}}\big).
\end{align*}
Since $0<\frac{\lambda_1}{2}<-\lambda_3<\lambda_1<-\lambda_2$, we  have
$\frac{1}{2}<\alpha=-\frac{\lambda_3}{\lambda_1} <1
<\beta=-\frac{\lambda_2}{\lambda_1}$.

Consider
 $L:\Sigma^*\to\ {\tilde{\Sigma}}^{\pm}$  defined by
\begin{equation}\label{L}
L(x,y,1)=\big(\sgn(x),
y|x|^\beta,|x|^\alpha\big).
\end{equation}
\begin{figure}[h]
\begin{center}
\includegraphics[width=7cm]{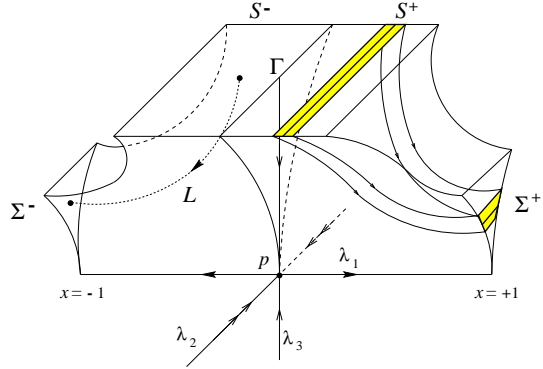}
\end{center}
\caption{\label{L3Dcusp}Behavior near the origin.}
\end{figure}
It is easy to see that $L(\Sigma^\pm)$ has the shape of a cusp
triangle without the vertex $(\pm 1,0,0)$.
In fact the vertex $(\pm 1,0,0)$ are
cusp points at the boundary of each of these sets.
The fact that $0<\alpha<1<\beta$ together with equation (\ref{L}) imply
that $L(\Sigma^\pm)$ are uniformly compressed in the $y$-direction.

 Clearly each segment $\Sigma^*\cap\{x=x_0\}$ is
taken by $L$ to another segment $\tilde{\Sigma}^\pm\cap\{z=z_0\}$ as
sketched in Figure~\ref{L3Dcusp}.
\begin{figure}[h]
\begin{center}
\includegraphics[width=6cm]{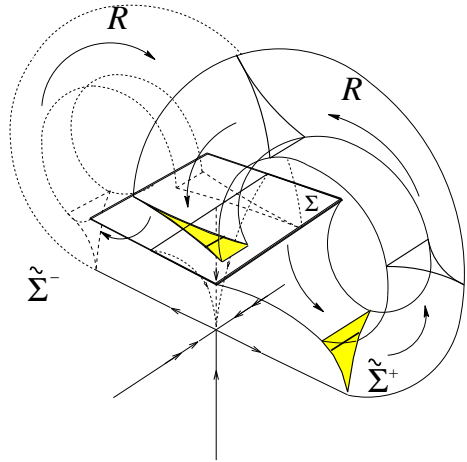}
\end{center}
\caption{\label{L3D}$T_\pm\circ R_\pm$ takes $\tilde\Sigma^\pm$ to $\Sigma$.}
\end{figure}

\subsection{The random turns around the origin}
\label{rotacao}

To imitate the random turns of a regular orbit around the origin and obtain
a butterfly shape for our flow, as it is in the original Lorenz flow depicted
at Figure \ref{fig-Lorenz3D},  we proceed as follows.

Recall that the equilibrium $p$ at the origin is hyperbolic and so its
stable $W^s(p)$ and unstable $W^u(p)$ manifolds are well defined, \cite{PM82}.
Observe that $W^u(p)$ has dimension one and so, it has two branches,
$W^{u,\pm}(p)$, and $W^u(p)=W^{u,+}(p)\cup\{p\}\cup W^{u,-}(p)$.

The sets $L(\Sigma^\pm)$ should return to the cross section $\Sigma$
through a  flow described by a suitable composition of a rotation $R_\pm$,
an expansion $E_{\pm\theta}$
and a translation $T_\pm$.

The rotation $R_\pm$ has axis parallel to the
$y$-direction. More
precisely is such that $(x,y,z)\in \tilde\Sigma^\pm$, then
\begin{align}
 \label{derivadadeR}
R_\pm(x,y,z)=
\left(
\begin{array}{cccc}
      0 & 0 & \pm 1       \\
 0 & 1 & 0\\
\pm 1 & 0 & 0
\end{array}
\right).
\end{align}
The expansion occurs only along the $x$-direction, so, the matrix of
$E_{\theta}$ is given by

\begin{align}
 \label{dilatacao}
E_{\pm\theta}(x,y,z)=
\left(
\begin{array}{cccc}
    \theta   & 0 & 0       \\
 0 & 1 & 0\\
0 & 0 & 1
\end{array}
\right)
\end{align}
with $\theta \cdot ({\frac{1}{2}}^\alpha )<1 $ and $\theta\cdot \alpha\cdot 2^{1-\alpha} > 1$. The first condition is to ensure that the image of the resulting map is contained in $\Sigma$, the second condition makes a certain one dimensional induced map to be piecewise expanding. This point will be discussed below.

$T_\pm:\RR^3\to \RR^3$ is chosen such that the unstable direction starting
from the origin is sent to the boundary of $\Sigma $ and the image
of both $\tilde\Sigma^\pm$ are disjoint.
 These transformations $R_\pm , E_{\pm\theta}, T_\pm $ take line
segments $\tilde\Sigma^\pm\cap\{z=z_0\}$ into line segments
$\Sigma\cap\{x=x_1\}$ as sketched in Figure~\ref{L3D}, and so does
the composition $T_\pm\circ E_{\pm\theta}\circ R_\pm$.

This composition of linear maps describes a vector field in a region outside
$[-1,1]^3$ in the sense that one can use the above matrices to define a
vector field $V$ such that the time one map of the associated flow
realizes $T_\pm\circ E_{\pm\theta}\circ R_\pm$ as a map $L(\Sigma^\pm) \to \Sigma$. This will not be explicit here, since the choice of the vector field  is not really important for our purposes (provided the return time is integrable).

The above construction allow to describe for each $t\in\mathbb{R} $
the orbit $X^t(x)$  of each point $x \in \Sigma$: the orbit will
start following the linear field until  $\tilde\Sigma^\pm$ and then
it will follow $V$ coming back to $\Sigma$ and so on. Let us denote with
${\cal  B}=\{ X^t(x),x\in \Sigma, t\in \mathbb{R}^+\} $ the set where this flow acts.
The geometric Lorenz flow is then the couple $({\cal B}, X^t )$ defined in this way.

The Poincar\'e first return map will be hence defined by
$F:\Sigma^*\to \Sigma$ as
\begin{equation}
 \label{F}
F(x,y)=\left\{
\begin{array}{ccc}
 T_+\circ E_{+\theta}\circ R_+\circ {L}(x,y,1) & \mbox{for }\, x>0\\
T_-\circ E_{-\theta}\circ R_-\circ {L}(x,y,1) &  \mbox{for }\, x < 0
\end{array}
\right.
\end{equation}

The combined effects of $T_\pm\circ R_\pm$ and ${L}$ on lines implies that
the foliation $\cF^s$ of $\Sigma$ given by the lines $\Sigma\cap\{x=x_0\}$ is
invariant under the return map. In another words, we have
\vspace{0.2cm}

$(\star)$ \/{\em for any given
leaf $\gamma$ of $\cF^s$, its image $F(\gamma)$ is
contained in a leaf of $\cF^s$}.
\vspace{0.2cm}

\subsection{An expression for the first return map and its differential}
\label{exprassaodeF}

Combining equations (\ref{L}) with the effect of the rotation composed with
the expansion and the translation,
we obtain that $F$ must have the form
\begin{equation}\label{Fgrande}
F(x,y)=\big(f_{Lo}(x),g_{Lo}(x,y)\big)
\end{equation}
where
 $f_{Lo}:I\setminus\{0\}\to I$ and
$g_{Lo}:(I\setminus\{0\})\times I\to I$ are given by
\begin{equation}
  \label{fLo}
f_{Lo}(x)=
\left\{
\begin{array}{cccc}
 f_1(x^\alpha) &  x < 0       \\
f_0(x^\alpha) &  x > 0
\end{array}
\right.
\quad \mbox{with $f_i =(-1)^{i}\theta\cdot x+b_i, i\in\{0,1\}$, and }
\end{equation}
\begin{equation}
  \label{gLo}
g_{Lo}(x,y)=
\left\{
\begin{array}{cccc}
g_1(x^\alpha,y\cdot x^\beta) &  x < 0       \\
g_0(x^\alpha,y\cdot x^\beta) &  x > 0,
\end{array}
\right.
\,\,
\end{equation}
where  $g_1|I^-\times I\to I $ and $g_0|I^+\times I\to I$ are suitable affine maps. Here $I^-=(-1/2,0)$, $I^+=(0,1/2)$.
\begin{figure}[htbp]
\begin{minipage}{5cm}
\centering{\includegraphics[width=5cm]{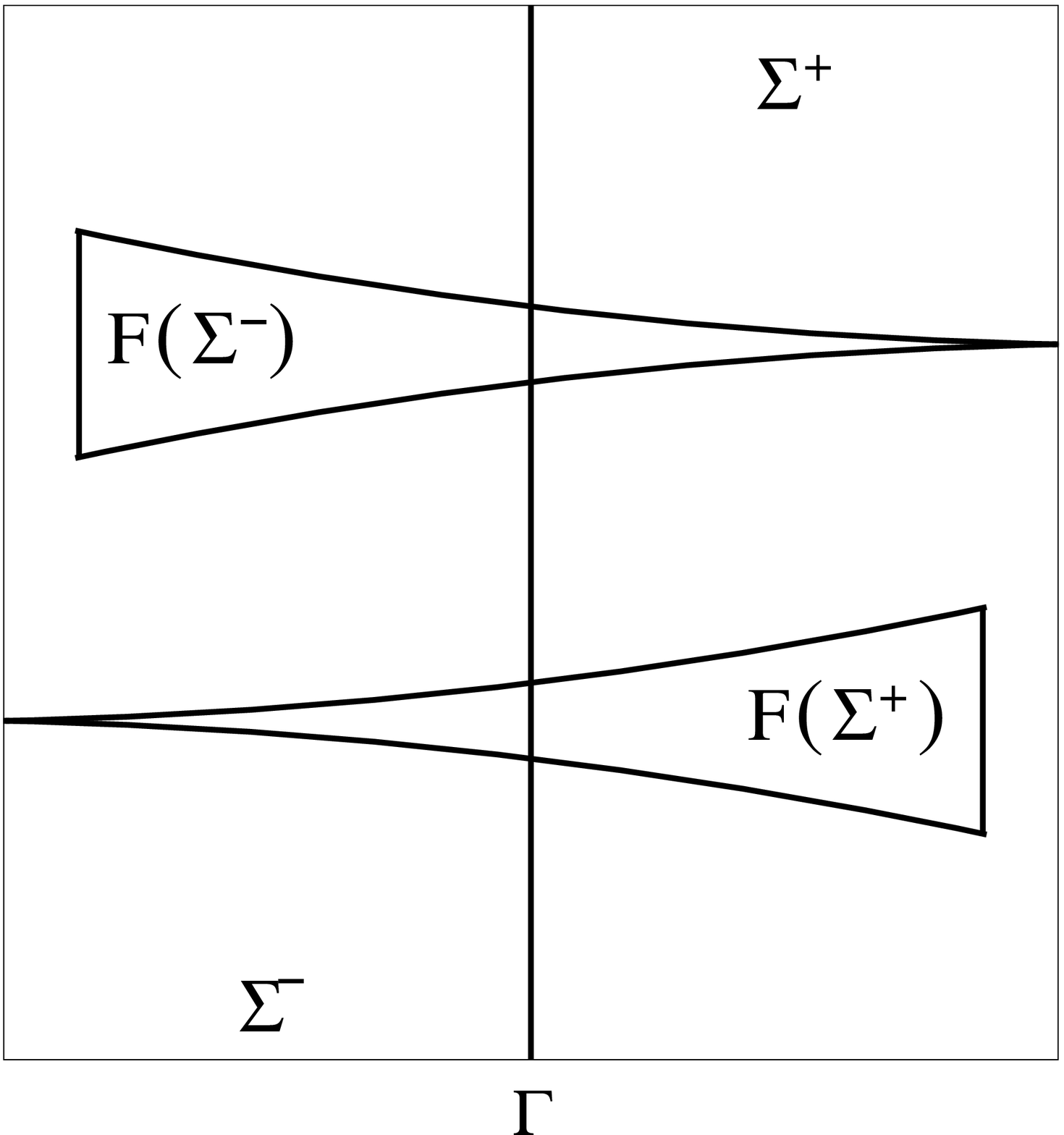}}
\caption{\label{L2D}{$F(\Sigma^*)$.}}
\end{minipage}
\hfill
\begin{minipage}{5cm}
\centering{\includegraphics[width=5cm]{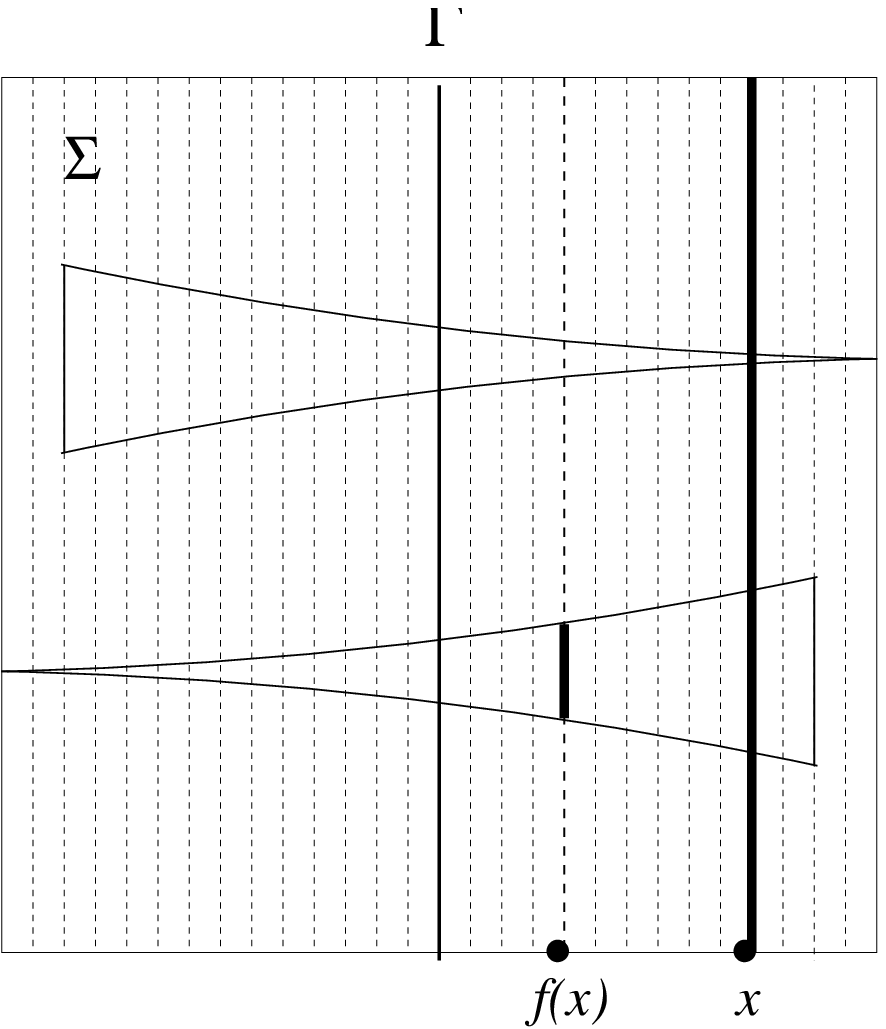}}
\caption{\label{folhea}{Projection on $I$.}}
\end{minipage}
\end{figure}

Now, to find an expression for $DF$ we proceed as follows.
Recall $F=T_\pm \circ E_{\pm \theta} \circ R_{\pm} \circ L$,
$L$ is as in  (\ref{L}),  $DR_\pm$ is as in (\ref{derivadadeR}).
Given $q=(x,y) \in \Sigma^*$ with $x>0$, we have

\begin{align*}
DL(x,y,1)=
\left(
\begin{array}{cccc}
\beta \cdot y\cdot x^{\beta-1} &  x^\beta       \\
\alpha \cdot x^{\alpha-1} &  0  \\
\end{array}
\right).
\end{align*}

Restricting the rotation and the other linear maps to $\tilde\Sigma^\pm$ and composing the resulting matrices we  get

\begin{align}
 \label{eq:derivadaF}
DF(x,y)=  \left(
\begin{array}{cccc}
    \theta\cdot\alpha\cdot x^{(\alpha-1)} &  0
  \\
  \beta\cdot
  yx^{(\beta-\alpha)}
  & x^\beta
\end{array}
\right).
\end{align}

The expression for $DF$ at $q=(x,y)$ with $x< 0$ is similar.


\subsection{Properties of the  map $g_{Lo}$}
\label{asegundacoordenada}

Observe that by construction $g_{Lo}$ in equation (\ref{F}) is piecewise $ C^2$.
Moreover, equation (\ref{eq:derivadaF}) implies the following bounds on
its partial derivatives :

\begin{enumerate}
 \item[(a)] For all $(x,y)\in\Sigma^*, x> 0$, we have
${\partial_y} g_{Lo}(x,y)=  x^\beta$. As $\beta>1$, $|x|\leq 1/2$,
there is $0<\lambda<1$ such that
\begin{equation}
 \label{gy}
|{\partial_y} g_{Lo}| < \lambda.
\end{equation}
The same bound works for $x < 0$.

\item[(b)] For all $(x,y)\in\Sigma^*, x \neq 0 $, we have
${\partial_x} g_{Lo}(x,y)=\beta\cdot x^{\beta-\alpha}$.
As $\beta-\alpha > 0$  and $|x|\leq 1/2$,  we get
\begin{equation}
 \label{gx}
|\partial_x g_{Lo}| < \infty.
\end{equation}
\end{enumerate}
Item (a) above implies that
the map $F=(f_{Lo},g_{Lo})$ is uniformly contracting on the leaves of the foliation $\cF^s$:
there is $ C >0$ such that, if $\gamma$ is a leaf of $\cF^s$ and  $x,y\in\gamma$ then
$\dist\big(F^n(x),F^n(y)\big)\leq {\lambda^ n}\cdot C\cdot  \dist(x,y)$
where $\lambda$ can be chosen as the one given by equation (\ref{gy}).

\begin{figure}[htpb]
\centerline{\includegraphics[width=4cm]{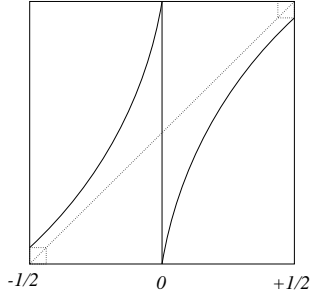}}
\caption{\label{L1D}{The Lorenz map $f_{Lo}$.}}
\end{figure}

\subsection{Properties of the one-dimensional map $f_{Lo}$}
\label{sec:propert-one-dimens}

Now let us outline the main properties of $f_{Lo}$.
  We recall that we chosen $\theta$ such that $\theta\cdot \alpha\cdot 2^{1-\alpha} > 1$. 

The following properties are easily implied from the
construction of $X^t$:
\begin{enumerate}
\item[(f1)] By equation (\ref{fLo}) and the way $T_\pm$ is defined,
$f_{Lo}$ is discontinuous at $x=0$. The lateral limits
  $f_{Lo}(0^\pm)$ do exist,  $f_{Lo}(0^\pm)=\pm\frac12$,

\item[(f2)] $f_{Lo}$ is $C^2$ on $I\setminus\{0\}$. By the choice of $\theta$  it holds $f'_{Lo}(1/2) >1$. By the convexity properties of $f_{Lo}$ we then obtain that
\begin{equation}
 \label{derivadamaiorqueum}
f'_{Lo}(x) > 1 \quad \quad \mbox{for all}\quad \quad x \in I\setminus\{0\}.
\end{equation}

\item[(f3)] The limits of $f_{Lo}'$ at $x=0$ are
  $\lim_{x\to 0 }f_{Lo}'(x)=+\infty$.
\end{enumerate}

We obtain that $f_{Lo}$ is a piecewise expanding map.
Moreover $f_{Lo}$ has a dense orbit, which in its turn implies
that the closure of the maximal invariant set by $f_{Lo}$
is the whole interval $I$, see \cite[Lemma 2.11]{APbook}.

Now recall that the {\em variation} $\vari \phi$ of a function $\phi:[0,1]\to \RR$ is defined by
$$
\vari \phi= \sup \sum_{i=1}^{n}|\phi(x_{i-1})-\phi(x_i)|
$$
where the supremum is taken over all finite partitions
$0=x_0<x_1<\cdots < x_n=1$, $n\geq 1$, of $[0,1]$. The variation
$\vari_J \phi=\vari(\phi|J)$ of $\phi$ over an arbitrary interval
$J\subset [0,1]$ is defined by a similar expression, with the
supremum taken over all the $x_0, x_1, \cdots , x_n \in J$, with
$\inf J \leq x_0< x_1 < \cdots < x_n \leq \sup J$.
One says that $\phi$ has {\em bounded variation}, or $\phi$ is BV for short,
 if $\vari  \phi <  \infty$.

The one dimensional map has the following property, which is important to obtain  the existence of an SRB invariant measure and its statistical properties.

\begin{lemma}
\label{boundedvariation}
 Let $X^t$ a $C^2$ geometric Lorenz flow as before and $f_{Lo}$ be the one-dimensional map
associated to $X^t$. Then $\frac{1}{f_{Lo}'}$ is BV.
\end{lemma}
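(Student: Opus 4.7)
My plan is to write down $1/f'_{Lo}$ in closed form from \reff{fLo}, observe that it extends continuously at the critical point $c=0$ to a function on $I$ which is piecewise monotone, and then read off BV directly from the monotonicity.

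First I would differentiate the two branches of \reff{fLo}. On $x>0$ one has $f_{Lo}(x)=\theta x^\alpha+b_0$; on $x<0$ one has $f_{Lo}(x)=-\theta|x|^\alpha+b_1$ (the notation $x^\alpha$ in \reff{fLo} must be read as $|x|^\alpha$ for $x<0$, since in the geometric construction this quantity is precisely the $z$-coordinate produced by the map $L$ in \reff{L}). Both branches give the same closed-form expression
\[
f'_{Lo}(x) \;=\; \theta\,\alpha\,|x|^{\alpha-1},\qquad x\in I\setminus\{0\},
\]
which is positive (consistent with 1.c)) and diverges at $c=0$. Hence
\[
\frac{1}{f'_{Lo}(x)} \;=\; \frac{|x|^{1-\alpha}}{\theta\,\alpha}.
\]

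Next I would use the fact that $\alpha\in(1/2,1)$, so that $1-\alpha>0$. Thus $|x|^{1-\alpha}\to 0$ as $x\to 0$, and $1/f'_{Lo}$ extends continuously to all of $I$ by assigning it the value $0$ at $c=0$; moreover $x\mapsto|x|^{1-\alpha}$ is strictly decreasing on $[-1/2,0]$ and strictly increasing on $[0,1/2]$, and the same then holds for the extended function $1/f'_{Lo}$. Since the variation of a continuous monotone function on an interval equals the absolute difference of its endpoint values, additivity of the variation over the two monotonicity pieces yields
\[
\vari\!\left(\frac{1}{f'_{Lo}}\right) \;=\; 2\cdot\frac{(1/2)^{1-\alpha}}{\theta\,\alpha} \;<\; \infty,
\]
which is the claimed BV property.

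I do not foresee any real difficulty. The only subtle point is that although $f'_{Lo}(x)\to\infty$ at $x=c$, the strict inequality $\alpha<1$ forces the reciprocal $1/f'_{Lo}$ to \emph{vanish} rather than blow up at $c$; once this continuous extension has been recognized, the BV estimate is just a one-line telescoping computation on each monotone branch.
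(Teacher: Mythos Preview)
Your proof is correct and follows essentially the same route as the paper: both arguments establish that $1/f'_{Lo}$ is bounded and monotone on each of the two branches, whence BV. The only difference is that you compute the explicit formula $1/f'_{Lo}(x)=|x|^{1-\alpha}/(\theta\alpha)$ and read off monotonicity and the exact variation from it, whereas the paper argues more abstractly that each branch of $f_{Lo}$, being an affine map composed with a power, is convex, so $f'_{Lo}$ (hence its reciprocal) is monotone on each branch, and boundedness follows from $f'_{Lo}>1$.
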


\begin{proof}
Each branch of $f_{Lo}$ is the composition of an affine map with $x^\alpha$ then it is a convex function.
Hence, the derivative $f'_{Lo}$ is monotonic on each branch, implying that $(f'_{lo})^{-1}$ is also monotonic.
 On the other hand, $(f'_{Lo})^{-1}$ is bounded because $f'_{lo}>1$.
Thus $(f'_{Lo})^{-1}$ is monotonic and bounded and hence is BV.
\end{proof}

We have seen that $f_{Lo}$ is a topologically transitive piecewise expanding map
with $\frac{1}{f_{Lo}'}$  BV. The statistical properties of such maps are well
known. Next we state a result about it, which will be used later:

\begin{proposition}
 \label{prop:densidadeBVdef}(\cite{Vi97}, Prop.3.8)
The one-dimensional  $f_{Lo}$ admits a unique invariant probability
$\mu_{f_{Lo}}$ which is absolutely continuous with respect to
Lebesgue measure $m$, it is ergodic and so a SRB measure for the
map. Moreover $d\mu_{f_{Lo}}/dm$ is  a BV function and in particular
it is bounded. Furthermore $f_{Lo}$ has exponential decay of
correlations for $L^1$ and BV observables and any a.c.i.m. converges
exponentially fast to the invariant measure:
 there are constants $C>0$ and $\lambda >0$, depending on the
system such that for each $n$ and observables $ f , g$:%
\begin{equation*}
\left|\int ~g(F^{n}(x))f(x)dm-\int g(x)d\mu \int f(x)dm \right|\leq C\cdot \Vert g\Vert
_{L_{1}}\cdot \Vert f\Vert _{BV}\cdot e^{-\lambda n}.
\end{equation*}
\end{proposition}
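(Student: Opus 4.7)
The plan is to prove this by the standard Lasota--Yorke / transfer operator machinery for piecewise expanding interval maps. Let $\cL\colon L^1(m)\to L^1(m)$ be the transfer (Perron--Frobenius) operator of $f_{Lo}$ with respect to Lebesgue measure, defined by
\[
\cL\phi(x)=\sum_{y\in f_{Lo}^{-1}(x)}\frac{\phi(y)}{|f_{Lo}'(y)|}.
\]
The first step is to establish a Lasota--Yorke inequality of the form
\[
\vari(\cL^{n}\phi)\leq A\sigma^{n}\vari(\phi)+B\|\phi\|_{L^{1}}
\]
for some $0<\sigma<1$ and constants $A,B>0$, valid for every BV function $\phi$. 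This rests precisely on the two ingredients guaranteed by our hypotheses: the expanding property $\inf f_{Lo}'>1$ from item (f2), which produces the contraction factor $\sigma$ on the variation, and the BV regularity of $1/f_{Lo}'$ from Lemma \ref{boundedvariation}, which controls the boundary error terms generated at the endpoints of the monotonicity partition (including the discontinuity at $c=0$, where $f_{Lo}'$ blows up).

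Given the Lasota--Yorke inequality, one invokes the Ionescu-Tulcea--Marinescu theorem (as presented in \cite{Vi97}) to conclude that $\cL$ acts quasi-compactly on $BV$: its essential spectral radius is at most $\sigma<1$ and its peripheral spectrum consists of finitely many eigenvalues on the unit circle with finite-dimensional generalized eigenspaces inside $BV$. Normalizing a nonnegative fixed point of $\cL$ produces a BV density $h=d\mu_{f_{Lo}}/dm$ of an absolutely continuous invariant probability $\mu_{f_{Lo}}$; since $h\in BV$ it is automatically bounded. Ergodicity and uniqueness of the absolutely continuous invariant measure follow from the topological transitivity recalled above (existence of a dense orbit, \cite[Lemma 2.11]{APbook}): any two ergodic ACIMs would have disjoint open supports contradicting transitivity.

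To get exponential mixing one upgrades quasi-compactness to a spectral gap, showing that $1$ is the only eigenvalue on the unit circle and is simple. This is a standard consequence of the topological mixing of $f_{Lo}$ on $I$, itself implied by the expansion together with the fact that each branch is onto $I$ (from (f1), $f_{Lo}(0^{\pm})=\pm 1/2$). With the spectral gap one writes $\cL=P+N$, where $P\phi=(\int\phi\,dm)\,h$ is the one-dimensional spectral projector on the fixed direction and $N$ has spectral radius $\rho<1$ on $BV$. Then for $\phi\in BV$ and $g\in L^{1}$,
\[
\int g\,(f_{Lo}^{n})\,\phi\,dm-\Bigl(\int g\,h\,dm\Bigr)\Bigl(\int \phi\,dm\Bigr)=\int g\,N^{n}\phi\,dm,
\]
after using the duality $\int g(f_{Lo}^{n}(x))f(x)\,dm=\int g\,\cL^{n}f\,dm$. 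Bounding the right-hand side by $\|g\|_{L^{1}}\|N^{n}\phi\|_{\infty}\leq C\|g\|_{L^{1}}\|\phi\|_{BV}\rho^{n}$ yields the stated estimate. Exponential convergence of any a.c.i.m.\ to $\mu_{f_{Lo}}$ follows from the same spectral decomposition applied to the difference of the densities.

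The only nontrivial analytic step, and the main obstacle, is the Lasota--Yorke inequality itself: the infinite derivative of $f_{Lo}$ at $c=0$ prevents one from using the standard proof based on $f_{Lo}''\in L^{\infty}$, and forces one to work directly with the BV regularity of $1/f_{Lo}'$ furnished by Lemma \ref{boundedvariation}. Once the inequality is in place, the remaining steps (quasi-compactness, uniqueness via transitivity, exponential decay from the spectral gap) are essentially automatic in the BV piecewise expanding framework of \cite{Vi97}.
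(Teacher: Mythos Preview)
The paper does not actually prove this proposition: it is stated as a citation of \cite[Prop.~3.8]{Vi97} and used as a black box. Your sketch is the standard Lasota--Yorke / Ionescu-Tulcea--Marinescu argument for piecewise expanding maps with $1/T'$ of bounded variation, which is precisely the content of the cited reference, so in that sense your approach coincides with what the paper relies on.

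One small inaccuracy: you justify the spectral gap by asserting that ``each branch is onto $I$''. This is not claimed in the paper and is generally false for geometric Lorenz maps (only the full map is asserted to be onto in property 1.c), and the endpoints $f_{Lo}(\pm 1/2)$ need not be $\pm 1/2$). The correct input is the topological transitivity of $f_{Lo}$ recalled just before the proposition (existence of a dense orbit, \cite[Lemma~2.11]{APbook}); for a piecewise expanding map with a unique a.c.i.m.\ this already forces exactness, hence mixing, hence the absence of peripheral eigenvalues other than $1$. With that correction your outline is sound.
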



Summarizing, for what it was said above, the study of the $3$-flow can be reduced to the study of a bi-dimensional map $F$. Moreover, the dynamics of this map
can be further reduced to a one-dimensional map, $f_{Lo}$ called one dimensional {\em{ Lorenz map}}.
Figure~\ref{L1D} shows the graph of this one-dimensional transformation, and Figure~\ref{L2D} sketches $F(\Sigma^*)$.


\section{A physical measure for a Lorenz like flow}
\label{sec:SBRfluxo}

In this section, following \cite{Vi97} we construct a physical measure for a flow $X^t$ which satisfies the assumptions 1a),...,1d),2) in the introduction. As noticed in the previous section, these assumptions 1a),...,1d) are satisfied by the Geometric Lorenz system.

Properties 1a),...,1d) implies that the flow Poincar\'e map has an invariant foliation and the one dimensional induced map $T$ is piecewise expanding.
Piecewise expanding maps (see Proposition \ref{prop:densidadeBVdef}) admits a unique invariant probability  measure $\mu_{T}$ which is absolutely continuous with respect to Lebesgue measure $m$.

From $\mu_{T}$ we may construct  a SRB measure $\mu_F$, for the first return map  $F$ through the following general procedure (\cite{Bo75,Vi97}).
Since $\mu_{T}$ is defined on the interval $I$ which can be
identified to the space of leaves of the contracting foliation $\cF^s$, we may
also think of it as a measure on the $\sigma$-algebra of Borel subsets of
$\Sigma$ which are union of entire leaves of $\cF^s$. Using the fact that
$F$ is uniformly contracting on leaves of $\cF^s$  we conclude that the sequence
$$
F^{*n}(\mu_{T}), \quad n\geq 1,
$$
of push-forwards of $\mu_{T}$ under $F$ is weak*-Cauchy:
given any continuous $\psi: \Sigma\to \RR$
$$
\int \psi d(F^{n*}\mu_{T})=\int (\psi\circ F^n) d{\mu_{T}}, \quad n \geq 1,
$$
is a Cauchy  sequence in $\RR$, see \cite[pp.173]{Vi97}.
Define $\mu_F$ to be the weak*-limit of this sequence, that is,
$$
\int \psi d\mu_F=\lim \int \psi d(F^{*n}\mu)
$$
for each continuous $\psi$.
Then $\mu_F$ is invariant under $F$, and it is an ergodic physical measure for $F$.
The last statement follows from the fact that $\mu_{T}$ is an ergodic physical measure for $T$, together with the fact that asymptotic time-averages of
continuous functions $\psi:\Sigma \to \RR$ are constant on the leaves of $\cF^s$.

Given any point $x$ whose orbit sooner or later will cross $\Sigma$ we denote with $t(x)$ the first strictly positive time such that $X^{t(x)}(x)\in\Sigma$ (the {\em return time} of $x$ to $\Sigma$). Coherently with the Geometric Lorenz system, we will denote by $\Sigma^* $ the (full measure, by the assumption 1 in the introduction) subset of $\Sigma $ where $t$ is defined.

Now we show how to construct an physical invariant measure for the flow, {\em when the return time is integrable}:
\begin{equation}
\int_{\Sigma^*} t d\mu_F < \infty.
\end{equation}

Denote by $\sim$ the equivalence relation on $\Sigma\times \RR$ given by
$(w,t(w))\sim(F(w),0).$

Let $N=(\Sigma^*\times\RR)/\sim$ and $\nu=\pi_*(\mu_F\times dt)$, where $\pi:\Sigma^*\times\RR\to  N$
is the quotient map and $dt$ is a Lebesgue measure in $\RR$.
Equation (\ref{eq:integraldotempo-limitado}) gives that $\nu$  is a finite measure.
Let $\phi:N\to \RR^3$ be defined by $\phi(w,t)=X^t(w)$.
Let $\mu_X=\phi_*\nu$.
The measure $\mu_X$ is a physical for the flow $X^t$:
$$
\frac{1}{T}\int_0^T \psi(X^t(w))dt\to \int \psi d\mu_X \quad \mbox{as}
\quad T\to \infty
$$
for every continuous function $\psi:\RR^3\to \RR$, and Lebesgue almost every point
$w\in \phi(N).$

We end the subsection remarking that the Geometric Lorenz flow has integrable return time, hence the above construction for the invariant measure can be applied to it. As before denote by $t:\Sigma\setminus\Gamma \to (0,\infty)$ the return time to $\Sigma$. Then, recalling Equation (\ref{tempo}) there are $K,C>0$ such that

\begin{equation*}\label{eq:tempo}
-K^{-1}\log (d(x,\Gamma))-{C}\leq t(x)\leq -K\log (d(x,\Gamma))+C .
\end{equation*}

Combining this with the definition of $\mu_F$ and the remark made above that $d\mu_{f_Lo}/dm$ is a bounded function,
we conclude that
\begin{proposition} The return time is integrable
\begin{equation}
 \label{eq:integraldotempo-limitado}
t_0=\int t d\mu_F < \infty.
\end{equation}
\end{proposition}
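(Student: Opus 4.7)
The strategy is to reduce the integral on the two-dimensional section $\Sigma$ to an integral on the interval $I$ against $\mu_{f_{Lo}}$, and then use the boundedness of the density of $\mu_{f_{Lo}}$ together with the integrability of $\log|x|$ near $0$.

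First I would use the given upper bound $t(x)\le -K\log(d(x,\Gamma))+C$. Since $\Gamma=\{(x,y,1)\in\Sigma:x=0\}$, we have $d((x,y),\Gamma)=|x|$, so it suffices to show
\begin{equation*}
\int_{\Sigma^*}|\log|x||\,d\mu_F(x,y)<\infty.
\end{equation*}
Because $F$ preserves the vertical foliation $\cF^s$ and induces $T=f_{Lo}$ on the leaf space $I$, the projection $\pi:\Sigma\to I$, $\pi(x,y)=x$, satisfies $\pi\circ F=T\circ\pi$. By uniqueness of the absolutely continuous invariant measure for the piecewise expanding map $f_{Lo}$ (Proposition \ref{prop:densidadeBVdef}), we conclude $\pi_*\mu_F=\mu_{f_{Lo}}$. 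The function $(x,y)\mapsto|\log|x||$ depends only on $x$, so
\begin{equation*}
\int_{\Sigma^*}|\log|x||\,d\mu_F(x,y)=\int_{I}|\log|x||\,d\mu_{f_{Lo}}(x).
\end{equation*}

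Next, by Proposition \ref{prop:densidadeBVdef} the density $h=d\mu_{f_{Lo}}/dm$ is BV and hence bounded, say $\|h\|_\infty\le M$. Therefore
\begin{equation*}
\int_{I}|\log|x||\,d\mu_{f_{Lo}}(x)\le M\int_{-1/2}^{1/2}|\log|x||\,dx<\infty,
\end{equation*}
since $\log|x|$ is locally integrable at the origin. Combining the displays above with the logarithmic bound for $t$ yields $\int t\,d\mu_F\le -K\int\log|x|\,d\mu_{f_{Lo}}+C<\infty$, which is the claim.

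The conceptual content of the argument is thin: the only potentially delicate point is the identification $\pi_*\mu_F=\mu_{f_{Lo}}$, but this is immediate from the semiconjugacy $\pi\circ F=T\circ\pi$ and the uniqueness part of Proposition \ref{prop:densidadeBVdef}. Every other step is a straightforward comparison with the one-dimensional Lebesgue integral of $\log|x|$, which is classical. I do not anticipate any genuine obstacle; the result is essentially a direct corollary of the regularity of the SRB measure of the Lorenz one-dimensional map and the at-most-logarithmic singularity of the return-time function along $\Gamma$.
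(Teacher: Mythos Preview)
Your argument is correct and follows essentially the same line as the paper's: the paper also invokes the logarithmic bound $t(x)\le -K\log(d(x,\Gamma))+C$ and the boundedness of the density $d\mu_{f_{Lo}}/dm$ to conclude. You have simply made explicit the intermediate steps (the identification $\pi_*\mu_F=\mu_{f_{Lo}}$ via the semiconjugacy and uniqueness, and the reduction to $\int_{-1/2}^{1/2}|\log|x||\,dx<\infty$) that the paper leaves implicit in a single sentence.
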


\subsection{Local dimension.}
\label{sec:invariantmeasure}

Let us recall the  definition of local dimension and fix some notations for what follows.

Let $(M,d)$ be a metric space and assume that $\mu$ is a
Borel probability measure on $M$.
Given $x\in M$, let $B_r(x)=\{y\in M; d(x,y)\leq r\}$ be the ball centered at $x$ with radius $r$.
The {\em local dimension} of $\mu$ at $x\in M$ is defined by
$$
d_\mu(x)=\lim_{r\to 0}\frac{\log\mu(B_r(x))}{\log r}
$$
if this limit exists.
In this case $\mu(B_r(x)) \sim r^{d_\mu(x)}$.

This notion characterizes the local geometric structure of an invariant measure with respect to the metric in the phase space of the system, see \cite{Yo82} and \cite{Pe97}.

We can always define the {\em upper} and the {\em lower} local dimension at $x$ as
$$
{\overline d}_\mu(x)=\lim \sup_{r\to 0}\frac{\log\mu(B_r(x))}{\log r}\,,\quad\quad
{\underline d}_\mu(x)=\lim \inf_{r\to 0}\frac{\log\mu(B_r(x))}{\log r}.
$$
If $d^+(x)=d^-(x)=d$ almost everywhere the system is called {\em exact dimensional}.
In this case many properties  of dimension of a measure coincide.
In particular, $d$ is equal to the infimum Hausdorff dimension of full measure sets:
$d=\inf\{\dim_H Z; \mu(Z)=1\}.$
This happens in a large class of systems, for example, in $C^{2}$ diffeomorphisms having non
zero Lyapunov exponents almost everywhere, \cite{Pe97}.

\subsection{Relation between local dimension for $F$ and for $X^t$}
\label{sec:local-hitting}

Let us establish a relation between  $d_{\mu_{F}}$ and $d_{\mu_X}$ which will be used in the following.

\begin{proposition}\label{dimensao}
Let $x\in \mathbb{R}^3$ and $\pi (x)$ be the projection on $\Sigma $ given by $\pi (x)=y $ if $x$ is on the orbit of $y\in \Sigma$ and the orbit from $y$ to $x$ does not cross $\Sigma$ (if $x \in \Sigma$ then $\pi (x)=x$).
For all regular points $x\in \mathbb{R}^3 $ 
 \begin{equation}
  \label{relacao}
{{\overline d}_{\mu_X}}(x)={{\overline d}_{\mu_F}}(\pi(x)) + 1,\ \ {{\underline d}_{\mu_X}}^{-}(x)={{\underline d}_{\mu_F}}^{-}(\pi(x)) + 1.
\end{equation}
\end{proposition}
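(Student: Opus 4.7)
The strategy is to introduce flow-box coordinates around the regular point $x$, inside which $\mu_X$ decomposes as a local product of $\mu_F$ on $\Sigma$ with Lebesgue measure along the flow direction. A small ball $B_r(x)$ will then be sandwiched between ``slabs'' of the form $D_{cr}(\pi(x))\times(-cr,cr)$, and the Lebesgue factor of width $\sim r$ produces exactly the extra $+1$ in the local dimension.

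Set $y_0=\pi(x)$ and let $t_0\geq 0$ be as in the definition of regular point, so that $X^{t_0}$ is a diffeomorphism between a neighborhood of $y_0$ in $\Sigma$ and a neighborhood of $x$. Since the flow is transverse to $\Sigma$, the map
\[
\Phi(y,s):=X^{t_0+s}(y)
\]
is a $C^1$ diffeomorphism from some $U\times(-\epsilon,\epsilon)\ni(y_0,0)$ onto a neighborhood of $x$. Having non-degenerate differential on a compact set, $\Phi$ is bi-Lipschitz, hence there exists $K\geq 1$ such that for all sufficiently small $r>0$,
\[
\Phi\bigl(D_{r/K}(y_0)\times(-r/K,r/K)\bigr)\;\subset\;B_r(x)\;\subset\;\Phi\bigl(D_{Kr}(y_0)\times(-Kr,Kr)\bigr),
\]
where $D_\rho(y_0)\subset\Sigma$ is the disk of radius $\rho$ around $y_0$ in the intrinsic metric of $\Sigma$.

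By the construction of $\mu_X$ recalled in Section \ref{sec:SBRfluxo}, and because regularity of $x$ guarantees that the suspension map $\phi$ is one-to-one onto a neighborhood of $x$, the measure $\Phi_*(\mu_F|_U\times ds)$ coincides, up to a fixed positive normalizing constant, with $\mu_X$ on this neighborhood. Combined with the sandwich this yields
\[
c_1\,r\,\mu_F\bigl(D_{r/K}(y_0)\bigr)\;\leq\;\mu_X\bigl(B_r(x)\bigr)\;\leq\;c_2\,r\,\mu_F\bigl(D_{Kr}(y_0)\bigr),
\]
for some $c_1,c_2>0$ independent of $r$. Taking logarithms, dividing by $\log r<0$, and passing to $\limsup$ and $\liminf$ as $r\to 0$, the linear factor $r$ contributes exactly $+1$, while $\log\mu_F(D_\rho(y_0))/\log\rho$ converges to $\overline{d}_{\mu_F}(y_0)$ or $\underline{d}_{\mu_F}(y_0)$ respectively (using $\log(Kr)/\log r\to 1$), which gives both identities in \reff{relacao}.

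The only technical point I would be careful about is identifying the intrinsic $\Sigma$-disks $D_\rho(y_0)$ appearing in the flow-box estimate with the traces on $\Sigma$ of ambient $\RR^3$-balls that enter the definition of $d_{\mu_F}$; this is routine since $\Sigma$ is a smooth flat square sitting in $\RR^3$, so the intrinsic and extrinsic metrics are Lipschitz equivalent on any compact subset, and any such distortion is absorbed into the constant $K$.
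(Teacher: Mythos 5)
Your proof is correct and follows essentially the same route as the paper: the paper's argument also observes that $\mu_X$ is locally the push-forward of the product $\mu_F\times dt$ under a bi-Lipschitz map at regular points, so that the Lebesgue factor contributes the $+1$, and you have simply made the flow-box coordinates, the sandwich of $B_r(x)$ between product slabs, and the resulting two-sided estimate $c_1 r\,\mu_F(D_{r/K})\leq\mu_X(B_r(x))\leq c_2 r\,\mu_F(D_{Kr})$ explicit. No gaps beyond those already present in the paper's own (much terser) proof.
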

\begin{proof}
First observe that for product measures as
${\mu_X}=\mu_F\times dt$, where $dt$ is the Lebesgue measure at the line,
the formula is trivially verified.
But, by construction 
$\mu_X=\phi_\ast(d\mu_F\times dt)$, where $\phi:\RR^3\to \RR^3$
is a local bi-Lipschitz map at each regular point.
Since the local dimension is invariant by local bi-Lipschitz maps, it follows  the required equation (\ref{relacao}).
\end{proof}


\section{Decay of correlations for two dimensional Lorenz maps}

\label{2p1} In this section we estimate the decay of correlations for a
class of Lorenz like maps containing the first return map of the geometric
Lorenz system described above. Inspired by a remark of R. S. McKay (see 
\cite{Mc}, p. 8), this will be done by estimating the speed of approaching
of iterates of suitable measures (corresponding to Lipschitz observables) to
the invariant measure. For this purpose we will consider the space of
measures on $\Sigma $ as a metric space, endowed with the
Wasserstein-Kantorovich distance, whose basic properties we are going to
describe.

\textbf{Notations. }Let us introduce some notations: we will consider the $%
\sup $ distance on $\Sigma =[-\frac{1}{2},\frac{1}{2}]^{2}$, so that the
diameter, $diam(\Sigma )=1$. This choice is not essential, but will avoid
the presence of many multiplicative constants in the following making
notations cleaner.

As before, the square $\Sigma $ will be foliate by stable, vertical leaves.
We will denote the leaf with $x$ coordinate by $\gamma _{x}$ or, with a
small abuse of notation, when no confusion is possible we will denote both
the leaf and its coordinate with $\gamma $.

Let $f\mu $ be the measure $\mu _{1}$ such that $d\mu _{1}=fd\mu $.
Moreover, let us sometime for short denote the integral by $\mu (f)=\int
fd\mu .$ Let $\mu $ a measure on $\Sigma $. In the following, such measures
on $\Sigma $ will be often disintegrated in the following way: for each
Borel set $A$%
\begin{equation}
\mu (A)=\int_{\gamma \in I}\mu _{\gamma }(A\cap \gamma )d\mu _{x}
\label{dis}
\end{equation}%
with $\mu _{\gamma }$ being probability measures on the leaves $\gamma $ and 
$\mu _{x}$ is the marginal on the $x$ axis which will be an absolutely
continuous probability measure. We will also denote by $\phi _{x}$ its
density.

Let us consider the projection $\pi _{y}$ on the $y$ coordinate. Let us
denote the "restriction" of $\mu $ on the leaf $\gamma $ by 
\begin{equation*}
\mu |_{\gamma }=\pi _{y}^{\ast }(\phi _{x}(\gamma )\mu _{\gamma }).
\end{equation*}%
This is a measure on $I$ and it is not normalized. We remark that $\mu
|_{\gamma }(I)=\phi _{x}(\gamma )$. If $Y$ is a metric space, we denote by $%
PM(Y)$ the set of Borel probability measures on $Y$. Let us finally denote
by $L(g)$ be the best Lipschitz constant of $g:$ $L(g)=\sup_{x,y}\frac{%
|g(x)-g(y)|}{|x-y|}$ and set $\Vert g\Vert _{lip}=\Vert g\Vert _{\infty
}+L(g).$

\subsection{The Wasserstein-Kantorovich distance\label{sec:W-Kdistance}}

Let us consider a bounded metric space $Y$ and let us consider the following
notion of distance between measures: given two probability measures $\mu
_{1} $ and $\mu _{2}$ on $Y$

\begin{equation*}
W_{1}(\mu _{1},\mu _{2})=\underset{g\in 1lip(Y)}{\sup }(|\int_{Y}g~d\mu
_{1}-\int_{Y}g~d\mu _{2}|)
\end{equation*}%
where $1lip(Y)$ is the space of $1$-Lipschitz functions on $Y.$ We remark
that adding a constant to the test function $g$ does not change the above
difference $\int g~d\mu _{1}-\int g~d\mu _{2}.$ The above defined $W_{1}$
has moreover the following basic properties.

\begin{proposition}
\cite[Prop 7.1.5]{AGS}\label{prop:ambros} The following properties hold

\begin{enumerate}
\item $W_{1}$ is a distance and if $Y$ is separable and complete, then $%
PM(Y) $ with this distance is a separable and complete metric space.

\item  A sequence is convergent for the $W_{1}$ metrics if
and only if it is convergent for the weak topology.
\end{enumerate}
\end{proposition}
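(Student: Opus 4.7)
The plan is to treat parts (1) and (2) separately, using the boundedness of $Y$ throughout so that every element of $1lip(Y)$ is automatically bounded up to an additive constant, which means every bounded Lipschitz test function is (after rescaling) admissible.

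For part (1), I would first check the metric axioms directly from the definition. Symmetry follows by replacing $g$ with $-g$ in the supremum, since $1lip(Y)$ is symmetric under sign change. The triangle inequality is obtained by adding and subtracting $\int g\, d\mu_2$ inside the absolute value and then taking suprema. For the identity of indiscernibles, if $\int g\, d\mu_1 = \int g\, d\mu_2$ for every $g \in 1lip(Y)$, then by scaling the equality holds for every Lipschitz $g$; since Lipschitz functions are dense in $C_b(Y)$ (McShane extension plus uniform approximation), a standard Portmanteau argument forces $\mu_1=\mu_2$. For separability of $PM(Y)$, I would fix a countable dense subset $\{y_j\}\subset Y$, let $\mathcal{D}$ be the countable set of rational convex combinations $\sum q_j \delta_{y_j}$, and given $\mu$ partition $Y$ into small-$\diam$ Borel sets near the $y_j$ to produce an explicit $W_1$-approximation of $\mu$ by an element of $\mathcal{D}$.

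The delicate step is completeness. Given a $W_1$-Cauchy sequence $(\mu_n)$, one must produce a weak limit. The plan is to invoke Kantorovich-Rubinstein duality $W_1(\mu,\nu) = \inf_{\pi} \int d(x,y)\, d\pi(x,y)$ over couplings $\pi$ with marginals $\mu,\nu$, to construct couplings $\pi_{n,m}$ whose transport cost matches $W_1(\mu_n, \mu_m)$ up to $2^{-n}$. Transporting compact sets of high $\mu_m$-mass through these couplings yields uniform tightness of $(\mu_n)$, and Prokhorov then produces a weakly convergent subsequence with limit $\mu$. A final application of the lower semicontinuity of $W_1$ under weak convergence, together with the Cauchy property, upgrades this to $W_1(\mu_n,\mu) \to 0$. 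The hardest part of the whole proposition is this tightness argument.

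For part (2), the implication $W_1(\mu_n,\mu) \to 0 \Rightarrow$ weak convergence is immediate, since every bounded Lipschitz function, rescaled by its Lipschitz constant, lies in $1lip(Y)$ and such functions form a determining class. For the converse, suppose $\mu_n \to \mu$ weakly; by Prokhorov the family is tight, so for each $\epsilon>0$ there is a compact $K \subset Y$ with $\mu_n(K^c),\mu(K^c)<\epsilon$ uniformly. Normalizing $g \in 1lip(Y)$ by subtracting $g(y_0)$ at a fixed point $y_0$ makes this family uniformly bounded by $\diam(Y)$ and equicontinuous, so Arzela-Ascoli on $K$ produces a finite $\epsilon$-net $\{g_1,\dots,g_N\}$ in sup-norm. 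Weak convergence gives $|\int g_i\, d\mu_n - \int g_i\, d\mu| < \epsilon$ eventually for each $i$, while the contribution from outside $K$ is controlled by $\diam(Y)\cdot\epsilon$; taking the supremum over $1lip(Y)$ concludes that $W_1(\mu_n,\mu) \to 0$.
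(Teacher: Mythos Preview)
The paper does not prove this proposition at all: it is stated with a direct citation to \cite[Prop.~7.1.5]{AGS} and no argument is given. So there is no ``paper's own proof'' to compare against; your proposal is a self-contained argument where the authors simply quote a reference.

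That said, your sketch is essentially the standard route and is correct in outline. One small inaccuracy: the claim that ``Lipschitz functions are dense in $C_b(Y)$'' is not true in the uniform norm on a general (even bounded) metric space, and McShane extension does not give this. What you actually need, and what holds, is the weaker fact that bounded Lipschitz functions form a measure-determining class (this is part of the Portmanteau theorem), so $\int g\,d\mu_1=\int g\,d\mu_2$ for all such $g$ already forces $\mu_1=\mu_2$. With that correction the identity-of-indiscernibles step goes through. The completeness argument via Kantorovich--Rubinstein duality, tightness, and Prokhorov is the standard one, and your treatment of part~(2) correctly exploits the standing assumption that $Y$ is bounded (so that normalized $1$-Lipschitz functions are uniformly bounded by $\diam(Y)$), together with separability/completeness of $Y$ so that Prokhorov applies.
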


\begin{remark}
\label{conv}(distance and convex combinations) If $a+b=1,a\geq 0,b\geq 0$
then%
\begin{equation}
W_{1}(a\mu _{1}+b\mu _{2},a\mu _{3}+b\mu _{4})\leq a\cdot W_{1}(\mu _{1},\mu
_{3})+b\cdot W_{1}(\mu _{2},\mu _{4}).
\end{equation}%
Indeed%
\begin{equation*}
W_{1}(a\mu _{1}+b\mu _{2},a\mu _{3}+b\mu _{4})=\underset{g\in 1lip(Y)}{\sup }%
(|\int g~d(a\cdot \mu _{1}+b\cdot \mu _{2})-\int g~d(a\cdot \mu _{3}+b\cdot
\mu _{4})|)=
\end{equation*}%
\begin{equation*}
=\underset{g\in 1lip(Y)}{\sup }(|a\cdot \int g~d\mu _{1}+b\cdot \int g~d\mu
_{2}-a\cdot \int g~d\mu _{3}-b\cdot \int g~d\mu _{4}|)
\end{equation*}%
\begin{equation*}
\leq \underset{g\in 1lip(Y)}{\sup }(|a\int g~d\mu _{1}-a\cdot \int g~d\mu
_{3}|+|b\cdot \int g~d\mu _{2}-b\cdot \int g~d\mu _{4}|)=
\end{equation*}%
\begin{equation*}
\underset{g\in 1lip(Y)}{\sup }(a\cdot |\int g~d\mu _{1}-\int g~d\mu
_{3}|+b\cdot |\int g~d\mu _{2}-\int g~d\mu _{4}|)\leq a\cdot W_{1}(\mu
_{1},\mu _{3})+b\cdot W_{1}(\mu _{2},\mu _{4}).
\end{equation*}%
We also remark that the same kind of estimation can be done if the convex
combination has more than 2 summands.
\end{remark}

\begin{remark}
If $g$ is $\ell $-Lipschitz and $\mu _{1},\mu _{2}$ are probability measures
then 
\begin{equation*}
|\int_{Y}g~d\mu _{1}-\int_{Y}g~d\mu _{2}|\leq \ell \cdot W_{1}(\mu _{1},\mu
_{2}).
\end{equation*}
\end{remark}

\subsection{Wassertein distance and decay of correlations over Lipschitz
observables}

We give some general facts on the relation between $W_{1}$ distance and
decay of correlations.

Let $(Y,F,\mu )$ be a dynamical system on a metric space with invariant
probability\ measure $\mu $. The transfer operator associated to $F$ will be
indicated with $F^{\ast }$.

\begin{proposition}[decay as function of distance]
\label{prop:1}Let  $g\in lip(Y)$ and $f\in L^1(Y,\mu )$, $f\geq 0$. Let $\mu _{1}$ be a probability measure which is absolutely continuous with respect to $\mu ,$ and \thinspace $d\mu _{1}=\frac {f(x)}{||f||_{L^1}}d\mu $. Then
\begin{equation}\label{decwass}
|\int g(F^{n}(x))~f(x)d\mu -\int f(x)d\mu\int g(x)d\mu |\leq L(g)\cdot {||f||_{L^1}} \cdot W_{1}((F^{\ast
})^{n}(\mu _{1}),\mu ).
\end{equation}
\end{proposition}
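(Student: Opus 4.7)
The plan is to reduce the correlation integral to a Wasserstein-type integral comparison and then apply the Lipschitz-to-$W_1$ estimate recorded in the second remark of Subsection \ref{sec:W-Kdistance}.

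First I would normalize so that $\mu_1 = \frac{f}{\|f\|_{L^1}}\mu$ is a probability measure; this is where the factor $\|f\|_{L^1}$ in the bound comes from. Then I would rewrite the ``time-$n$ side'' of the correlation by transporting the dynamics onto the measure via the definition of the transfer operator: for any bounded measurable $g$,
\begin{equation*}
\int g(F^n(x))\, f(x)\, d\mu = \|f\|_{L^1}\int g\circ F^n \, d\mu_1 = \|f\|_{L^1}\int g \, d\bigl((F^{\ast})^n\mu_1\bigr).
\end{equation*}
On the ``product side'', invariance of $\mu$ gives $\int g\, d\mu = \int g\, d((F^{\ast})^n\mu)$, and of course $\int f\, d\mu = \|f\|_{L^1}$, so
\begin{equation*}
\int g(x)\, d\mu \cdot \int f(x)\, d\mu = \|f\|_{L^1}\int g\, d\mu.
\end{equation*}

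Subtracting, the difference the statement bounds equals
\begin{equation*}
\|f\|_{L^1}\left|\int g\, d\bigl((F^{\ast})^n\mu_1\bigr) - \int g\, d\mu\right|.
\end{equation*}
Since $g$ is $L(g)$-Lipschitz, the remark preceding this proposition (which is just the definition of $W_1$ rescaled by the Lipschitz constant) yields
\begin{equation*}
\left|\int g\, d\bigl((F^{\ast})^n\mu_1\bigr) - \int g\, d\mu\right| \leq L(g)\cdot W_1\bigl((F^{\ast})^n\mu_1, \mu\bigr),
\end{equation*}
and combining these two displays gives exactly \eqref{decwass}.

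There is essentially no obstacle here: the proof is a short chain of identities plus one application of the Kantorovich--Rubinstein duality in the form already recorded. The only point to be a little careful about is treating the degenerate case $\|f\|_{L^1}=0$ separately (both sides vanish), and noting that the normalization by $\|f\|_{L^1}$ is necessary to make $\mu_1$ a probability measure so that $W_1$ applies to it in the form stated in Proposition \ref{prop:ambros}.
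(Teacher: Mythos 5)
Your proof is correct and follows essentially the same route as the paper: normalize $f$ so that $\mu_1$ is a probability measure, use the transfer-operator identity $\int g\circ F^n\,d\mu_1=\int g\,d((F^\ast)^n\mu_1)$, and conclude with the Lipschitz-versus-$W_1$ remark (Kantorovich--Rubinstein duality). The extra remarks about the degenerate case $\|f\|_{L^1}=0$ and the (unneeded) invariance of $\mu$ on the product side are harmless.
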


\begin{proof}
Dividing by $L(g)$ we can suppose $g\in 1lip(Y)$. As $\int g(F(x))\frac {f(x)}{||f||_{L^1}}d\mu
=\int g(x)d(F^{\ast }(\mu _{1}))$ then the decay of correlations between $f$
and $g$ can be estimated in function of the distance between $(F^{\ast
})^{n}(\mu _{1})$ and $\mu $ as$:$%
\begin{equation*}
L(g) ||f||_{L^1}|\int g(F^{n}(x))~\frac {f(x)}{||f||_{L^1}}d\mu -\int g(x)d\mu |=L(g) ||f||_{L^1}|\int g(x)d(F^{\ast n}(\mu
_{1}))-\int g(x)d\mu |
\end{equation*}

\begin{equation*}
\leq L(g) ||f||_{L^1} \underset{g\in 1lip(Y)}{\sup }(|\int g~d(F^{\ast }{}^{n}(\mu
_{1}))-\int g~d\mu |)= L(g) ||f||_{L^1} W_{1}((F^{\ast })^{n}(\mu _{1}),\mu ).
\end{equation*}
\end{proof}

Conversely,

\begin{proposition}[distance as function of decay]
\label{prop:2}If for each $f\in L^{1}(\mu ),$ $f\geq 0$ and $g\in lip(Y)$ it
holds 
\begin{equation*}
|\int g(F^{n}(x))~f(x)d\mu -\int f(x)d\mu \int g(x)d\mu |\leq C\cdot
L(g)\cdot \Vert f\Vert _{L^{1}}\cdot \Phi (n)
\end{equation*}%
then taking $d\mu _{1}=\frac{f(x)}{||f||_{L^{1}}}d\mu $ it holds%
\begin{equation*}
W_{1}((F^{\ast })^{n}(\mu _{1}),\mu )\leq C\cdot \Phi (n).
\end{equation*}
\end{proposition}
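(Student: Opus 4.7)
The plan is to unwind the definition of the Wasserstein distance and essentially read Proposition \ref{prop:1} in reverse. First I would fix an arbitrary $g\in 1lip(Y)$ (so $L(g)\le 1$). Using the duality between the transfer operator and composition with $F$, together with the definition $d\mu_{1}=f/\|f\|_{L^{1}}\,d\mu$, I would rewrite
\[
\int g\,d\bigl((F^{\ast})^{n}\mu_{1}\bigr)=\int g\circ F^{n}\,d\mu_{1}=\frac{1}{\|f\|_{L^{1}}}\int g(F^{n}(x))\,f(x)\,d\mu(x).
\]
Because $f\ge 0$, the normalization $\int f\,d\mu=\|f\|_{L^{1}}$ holds, so one also has $\int g\,d\mu=\frac{1}{\|f\|_{L^{1}}}\int f\,d\mu\int g\,d\mu$.

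Subtracting the two identities, the left-hand side becomes $\frac{1}{\|f\|_{L^{1}}}$ times exactly the correlation expression appearing in the hypothesis. Applying the hypothesis to this pair $(f,g)$ and using $L(g)\le 1$ yields
\[
\left|\int g\,d\bigl((F^{\ast})^{n}\mu_{1}\bigr)-\int g\,d\mu\right|\leq C\cdot\Phi(n).
\]
Taking the supremum over $g\in 1lip(Y)$ then produces the desired bound on $W_{1}\bigl((F^{\ast})^{n}\mu_{1},\mu\bigr)$.

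There is essentially no obstacle: the statement is the formal inverse of Proposition \ref{prop:1}, and its proof reduces to matching normalizations and invoking the Kantorovich--Rubinstein characterization of $W_{1}$. The only detail worth checking is that the factor $\|f\|_{L^{1}}$ in the hypothesis cancels exactly against the density normalization of $\mu_{1}$, which it does precisely because $f\ge 0$ forces $\|f\|_{L^{1}}=\int f\,d\mu$, making the computation purely mechanical.
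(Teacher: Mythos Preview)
Your proof is correct and follows essentially the same route as the paper: fix $g\in 1lip$, divide the decay hypothesis by $\|f\|_{L^1}$, identify the quotient with $|\int g\,d(F^{*n}\mu_1)-\int g\,d\mu|$ via the duality $\int g\,d(F^{*n}\mu_1)=\int g\circ F^n\,d\mu_1$, and take the supremum. The paper's argument is the same computation written slightly more tersely.
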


\begin{proof}
Consider $g\in 1lip$. Hence%
\begin{eqnarray*}
\frac{C\cdot L(g)\Vert f\Vert _{L^{1}}\cdot \Phi (n)}{||f||_{L^{1}}} &\geq &%
\frac{|\int g(F^{n}(x))~f(x)d\mu -\int f(x)d\mu \int g(x)d\mu |}{%
||f||_{L^{1}}}= \\
&=&|\int g(x)d(F^{\ast n}(\mu _{1}))-\int g(x)d\mu |
\end{eqnarray*}%
since this hold for each $g$ hence $W_{1}(F^{\ast }{}^{n}(\mu _{1}),\mu
)\leq C\cdot \Phi (n).$
\end{proof}

\subsection{Disintegration and Wasserstein distance}

We will consider maps having an invariant foliation, as we have seen in the
Lorenz map. The invariant measure will then be disintegrated as in Equation (%
\ref{dis}) into a family of measures $\mu _{\gamma }$ on almost each stable
leaf $\gamma $ and an absolutely continuous measure $\mu _{x}$ on the
unstable direction.

If $\mu ^{1}$ and $\mu ^{2}$ are two disintegrated measures as above, their $%
W_{1}$ distance can be estimated in function of some distance between their
respective marginals on the $x$ axis and measures on the leaves:

\begin{proposition}
\label{prod}Let $\mu ^{1}$, $\mu ^{2}$ be measures on $\Sigma $ as above,
such that for each Borel set $A$

\begin{itemize}
\item $\mu ^{1}(A)=\int_{\gamma \in I}\mu _{\gamma }^{1}(A\cap \gamma )d\mu
_{x}^{1}$

\item $\mu ^{2}(A)=\int_{\gamma \in I}\mu _{\gamma }^{2}(A\cap \gamma )d\mu
_{x}^{2}$
\end{itemize}

with $\mu _{x}^{i}$ absolutely continuous with respect to the Lebesgue
measure, moreover let us suppose

\begin{enumerate}
\item for almost each vertical leaf $\gamma $, $W_{1}(\mu _{\gamma }^{1},\mu
_{\gamma }^{2})\leq \epsilon $ and

\item $\sup_{\Vert h\Vert _{\infty }\leq 1}|\int hd\mu _{x}^{1}-\int hd\mu
_{x}^{2}|\leq \delta $
\end{enumerate}

then $W_{1}(\mu ^{1},\mu ^{2})\leq \epsilon +\delta .$
\end{proposition}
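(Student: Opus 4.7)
The plan is the standard disintegration argument: fix a test function $g\in 1\mathrm{lip}(\Sigma)$, write each $\int g\,d\mu^i$ as an iterated integral against the disintegration, and split the difference into a "leafwise" piece controlled by hypothesis 1 and a "transverse" piece controlled by hypothesis 2.

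\textbf{Step 1 (normalising the test function).} Adding a constant to $g$ does not alter $\int g\,d\mu^{1}-\int g\,d\mu^{2}$, and since $\diam(\Sigma)=1$ we may replace $g$ by $g-c$ for a suitable $c$ so that $\|g\|_{\infty}\le 1/2$. This is the only place where the choice of the sup metric on $\Sigma$ (with $\diam=1$) is used.

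\textbf{Step 2 (disintegration and splitting).} For each vertical leaf $\gamma$, set
\[
G_{i}(\gamma)=\int_{\gamma} g\,d\mu^{i}_{\gamma}, \qquad i=1,2.
\]
Since $\mu^{i}_{\gamma}$ is a probability measure on $\gamma$ and $\|g\|_{\infty}\le 1/2$, each $G_{i}$ is a bounded measurable function of $\gamma$ with $\|G_{i}\|_{\infty}\le 1/2$. Using the disintegration formulas for $\mu^{1}$ and $\mu^{2}$, I would write
\[
\int g\,d\mu^{1}-\int g\,d\mu^{2}=\underbrace{\int_{I}\bigl(G_{1}(\gamma)-G_{2}(\gamma)\bigr)\,d\mu^{1}_{x}}_{(\mathrm{I})}+\underbrace{\int_{I}G_{2}(\gamma)\,d(\mu^{1}_{x}-\mu^{2}_{x})}_{(\mathrm{II})}.
\]

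\textbf{Step 3 (leafwise term).} On a vertical leaf the sup distance on $\Sigma$ restricts to the $y$-distance on $I$, so $g|_{\gamma}$ is itself $1$-Lipschitz on $\gamma$. By the Wasserstein-Kantorovich duality and hypothesis 1,
\[
|G_{1}(\gamma)-G_{2}(\gamma)|=\Bigl|\int g\,d\mu^{1}_{\gamma}-\int g\,d\mu^{2}_{\gamma}\Bigr|\le W_{1}(\mu^{1}_{\gamma},\mu^{2}_{\gamma})\le \varepsilon
\]
for $\mu^{1}_{x}$-a.e.\ $\gamma$. Since $\mu^{1}_{x}$ is a probability measure, $|(\mathrm{I})|\le\varepsilon$.

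\textbf{Step 4 (transverse term).} Since $\|G_{2}\|_{\infty}\le 1/2\le 1$, the function $G_{2}$ is admissible in hypothesis 2, giving $|(\mathrm{II})|\le \delta$. Combining Steps 3 and 4 yields
\[
\Bigl|\int g\,d\mu^{1}-\int g\,d\mu^{2}\Bigr|\le \varepsilon+\delta,
\]
and taking the supremum over $g\in 1\mathrm{lip}(\Sigma)$ gives $W_{1}(\mu^{1},\mu^{2})\le \varepsilon+\delta$.

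The only delicate point is the measurability of $\gamma\mapsto G_{i}(\gamma)$, which follows from the standard theory of disintegration (Rokhlin), so the argument is essentially bookkeeping once the clean split in Step 2 is written down. There is no real obstacle; the main thing to keep track of is that $g|_{\gamma}$ inherits the Lipschitz constant of $g$ because of the choice of the sup metric on $\Sigma$.
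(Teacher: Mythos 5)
Your proposal is correct and follows essentially the same route as the paper: the same add-and-subtract decomposition into a leafwise term (controlled by hypothesis 1, using that $g$ restricted to a vertical leaf stays $1$-Lipschitz in the sup metric) and a transverse term (controlled by hypothesis 2 after normalising $g$ by a constant so that $\Vert g\Vert _{\infty }\leq 1$). The only cosmetic difference is that you normalise $g$ at the outset rather than when treating the transverse term, which if anything states the bound $\Vert G_{2}\Vert _{\infty }\leq 1$ a little more cleanly than the paper does.
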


\begin{proof}
Considering the $W_{1}$ distance and disintegrating $\mu ^{1}$ and $\mu ^{2}$%
:

\begin{equation}
W_{1}(\mu ^{1},\mu ^{2})\leq \sup_{g\in 1lip}|\mu ^{1}(g)-\mu ^{2}(g)|=
\end{equation}%
\begin{equation*}
=\sup_{g\in 1lip}|\int_{\gamma \in I}\int_{\gamma }g(\ast )d\mu _{\gamma
}^{1}d\mu _{x}^{1}-\int_{\gamma \in I}\int_{\gamma }g(\ast )d\mu _{\gamma
}^{2}d\mu _{x}^{2}|.
\end{equation*}%
Adding and subtracting $\int \int_{\gamma }g(\ast )d\mu _{\gamma }^{2}d\mu
_{x}^{1}$ the last expression is equivalent to 
\begin{eqnarray*}
&&\sup_{g\in 1lip}|\int_{I}\int_{\gamma }g(\ast )d\mu _{\gamma }^{1}d\mu
_{x}^{1}-\int_{I}\int_{\gamma }g(\ast )d\mu _{\gamma }^{2}d\mu _{x}^{1}+ \\
&&+\int_{I}\int_{\gamma }g(\ast )d\mu _{\gamma }^{2}d\mu
_{x}^{1}-\int_{I}\int_{\gamma }g(\ast )d\mu _{\gamma }^{2}d\mu _{x}^{2}|.
\end{eqnarray*}%
This becomes%
\begin{equation*}
\sup_{g\in 1lip}|\int_{I}(\int_{\gamma }g(\ast )d\mu _{\gamma }^{1}-g(\ast
)d\mu _{\gamma }^{2})d\mu _{x}^{1}+\int_{I}\int_{\gamma }g(\ast )d\mu
_{\gamma }^{2}d\mu _{x}^{1}-\int_{I}\int_{\gamma }g(\ast )d\mu _{\gamma
}^{2}d\mu _{x}^{2}|\leq
\end{equation*}%
\begin{equation*}
\leq \sup_{g\in 1lip}|\int_{I}\epsilon d\mu _{x}^{1}+\int_{I}\int_{\gamma
}g(\ast )d\mu _{\gamma }^{2}d\mu _{x}^{1}-\int_{I}\int_{\gamma }g(\ast )d\mu
_{\gamma }^{2}d\mu _{x}^{2}|\leq
\end{equation*}%
\begin{equation}
\leq \epsilon +|\int_{I}\int_{\gamma }g(\ast )d\mu _{\gamma }^{2}d\mu
_{x}^{1}-\int_{I}\int_{\gamma }g(\ast )d\mu _{\gamma }^{2}d\mu _{x}^{2}|
\end{equation}%
Since $g\in 1lip$ and $diam(\Sigma )=1$ (on the square we consider the sup distance), then by adding a constant to $g$
(which does not change $\int gd\mu _{\gamma }^{1}-\int gd\mu _{\gamma }^{2}$
) we can suppose without loss of generality that $g\leq 1$ and then for
almost each $\gamma $ it holds $h(\gamma )=|\int_{\gamma }g(\ast )d\mu
_{\gamma }^{2}|\leq 1$. Hence, by assumption (2) the statement is proved.
\end{proof}

\subsection{Exponential decay of correlations. \label{sec:primeiroretorno}}

Now we are ready to prove that a Lorenz like two dimensional map $F$ has
exponential decay of correlations with respect to its SRB measure $\mu $. We
recall (see Proposition \ref{prop:densidadeBVdef} ) that for a piecewise
expanding map of the interval $T$, there are constants $C>0$ and $\lambda >0$%
, depending on the system such that, if $g$ and $f$ are respectively $L^{1}$
and BV (bounded variation) observables on $I$ for each $n$ it holds:%
\begin{equation}
|\int ~g(T^{n}(x))f(x)dm-\int g(x)d\mu \int f(x)dm|\leq C\cdot \Vert g\Vert
_{L_{1}}\cdot \Vert f\Vert _{BV}\cdot e^{-\lambda n}  \label{l1bv}
\end{equation}%
(recall that $m$ is the Lebesgue measure above). This will be used in the proof of the following theorem
\begin{theorem}
\label{resuno}Let $F:\Sigma \rightarrow \Sigma $ a Borel function such that $%
F(x,y)=(T(x),G(x,y))$. Let $\mu $ be an invariant measure for $F$ with
marginal $\mu _{x}$ on the $x$-axis (which is invariant for $T:I\rightarrow
I $ ). Let us suppose that

\begin{enumerate}
\item $(T,\mu _{x})$ satisfies the above equation \ref{l1bv} and $T^{-1}(x)$
is finite for each $x\in I$.

\item $F$ is a contraction on each vertical leaf: $G$ is $\lambda $-Lipschitz in $y$ with $\lambda <1$ for each $x$ .

\item $\mu $ is regular enough that for each $\ell $-Lipschitz function $%
f:\Sigma \rightarrow \mathbb{R}$ the projection $\pi _{x}^{\ast }(f\mu )$
has bounded variation density $\overline{f}$ \footnote{%
which can also be expressed as $\overline{f}(x)=\int f(x,y)~d\mu |_{\gamma
_{x}}.$}, with%
\begin{equation}
var(\overline{f})\leq K\ell  \label{fbar}
\end{equation}
where $K$ is not depending on $f$.
\end{enumerate}

Then $(F,\mu )$ has exponential decay of correlation (with respect to
Lipschitz  and $L^1$ observables as in Equation \ref{decwass} ).
\end{theorem}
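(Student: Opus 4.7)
The plan is to prove the theorem by estimating the Wasserstein--Kantorovich distance $W_1((F^*)^n \mu_1, \mu)$, where $\mu_1 = (f/\|f\|_{L^1})\mu$ is the normalized measure attached to the observable $f$: indeed, by Proposition \ref{prop:1} an exponential bound on this $W_1$ distance immediately yields the correlation estimate in the form of Equation (\ref{decwass}). To produce such an exponential bound I would invoke Proposition \ref{prod} and reduce the problem to two independent estimates: (i) a dual-$L^\infty$ bound on the distance between the $x$-marginals of the two measures, and (ii) a $W_1$ bound between their conditional measures on $\mu_x$-almost every vertical leaf.

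For the marginal estimate (i), the $x$-marginal of $\mu_1$ has density $\bar{f}\phi_x/\|f\|_{L^1}$, where $\phi_x$ is the density of $\mu_x$ and $\bar{f}(x)=\int f(x,y)\,d\mu|_{\gamma_x}$. By hypothesis (3), bound (\ref{fbar}) says that $\bar{f}$ is BV with variation controlled by the Lipschitz norm of $f$; together with the BV regularity of $\phi_x$ provided by the Lasota--Yorke argument of Appendix I, this shows that the $x$-marginal of $\mu_1$ is itself BV. Hypothesis (1)---the exponential $L^1$--BV decay of correlations for $T$---applied with test functions $h$ with $\|h\|_\infty\leq 1$ (hence $\|h\|_{L^1}\leq 1$ on $I$), then produces the desired exponential bound on $\sup_{\|h\|_\infty\leq 1}|\int h\,d(T^{*n}\mu_{1,x})-\int h\,d\mu_x|$.

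For the leaf estimate (ii), I would exploit the contraction of $F$ on vertical fibres (hypothesis 2) together with the fact that $F$ preserves the vertical foliation. Disintegrating $(F^*)^n\mu_1$ along vertical leaves, the conditional on any leaf $\gamma$ decomposes as a convex combination, indexed by the preimage leaves $\gamma'\in T^{-n}(\gamma)$, of the pushforwards $F^n_*(\mu_1)_{\gamma'}$, each of which is supported inside the image set $F^n(\gamma')\subset\gamma$ of diameter at most $\lambda^n$. The invariant conditional $\mu_\gamma$ has the same structure, with weights inherited instead from $\mu_x$. Pairing the two decompositions term by term bounds the leafwise $W_1$ distance by $\lambda^n$ (from the diameter of each matched piece) plus a weight-discrepancy term controlled by the $T$-decay estimate already used in (i).

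The hardest step is making the leafwise bound uniform in $\gamma$ as required by Proposition \ref{prod}: the weight discrepancy between the two convex combinations must be estimated in a pointwise sense, not merely on average. This is where bound (\ref{fbar}) plays its role, keeping the BV norm of the $x$-marginal density of $\mu_1$ growing only linearly in $L(f)$, together with the essential boundedness of BV densities on $I$, which converts the $L^1$--BV estimate for $T$ into the pointwise control of weights that is needed. Once both (i) and (ii) are bounded by $C\,e^{-\alpha n}$ with constants depending only on $\|f\|_{\mathrm{lip}}$, Proposition \ref{prod} yields $W_1((F^*)^n\mu_1,\mu)\leq C\,e^{-\alpha n}$, and Proposition \ref{prop:1} converts this into the claimed exponential decay of correlations.
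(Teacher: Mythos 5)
Your overall architecture coincides with the paper's: normalize $f\mu$ to $\mu_1$, bound $W_1((F^{\ast})^n\mu_1,\mu)$ by Proposition \ref{prod} through a marginal estimate plus a leafwise estimate, and convert back to correlations via Proposition \ref{prop:1}. Your step (i) is carried out exactly as in the paper (equation (\ref{l1bv}) applied with the bounded test function in the $L^1$ slot and $\overline{f}$ in the BV slot, using (\ref{fbar}) to get $\Vert\overline{f}\Vert_{BV}\leq (K+1)\ell$); note only the cosmetic point that the Lebesgue density of $\pi_x^{\ast}(f\mu)$ is $\overline{f}$ itself, not $\overline{f}\phi_x$, since the factor $\phi_x$ is already built into $\mu|_{\gamma_x}$.

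The problem is in your step (ii), at exactly the place you flag as hardest. Pairing the two $n$-step convex decompositions over $T^{-n}(\gamma)$ term by term leaves you with the total weight discrepancy $\sum_{\gamma'\in T^{-n}(\gamma)}|a_{\gamma'}-a^{\mu}_{\gamma'}|$, which must be exponentially small \emph{uniformly in} $\gamma$ for Proposition \ref{prod} to apply. The mechanism you propose for this --- the dual-$L^\infty$ (i.e.\ $L^1$-type) estimate from (\ref{l1bv}) ``converted into pointwise control'' by the essential boundedness of BV densities --- does not work: $L^1$-smallness together with a uniform BV bound does not give $L^\infty$-smallness (the densities $g_n=\mathbf{1}_{[0,e^{-n}]}$ have $\Vert g_n\Vert_{L^1}\to 0$ and $\vari(g_n)=2$ but $\Vert g_n\Vert_\infty=1$), so the weights attached to the leaves passing through such an exceptional interval need not converge at all. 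The paper's proof avoids the weight discrepancy entirely by running the leaf estimate one iteration at a time: it writes $\nu^1_{\gamma}$ and $\mu_{\gamma}$ as convex combinations of $F^{\ast}(\nu_{\gamma_i})$ and $F^{\ast}(\mu_{\gamma_i})$ over the preimage leaves with a common set of coefficients, applies convexity of $W_1$ (Remark \ref{conv}) and the fiberwise contraction (Remark \ref{remark}) to get $W_1(\nu^1_{\gamma},\mu_{\gamma})\leq \lambda\,\sup_{\gamma'}W_1(\nu_{\gamma'},\mu_{\gamma'})$, and then inducts to obtain $W_1(\nu^n_{\gamma},\mu_{\gamma})\leq\lambda^n$ uniformly in $\gamma$, with no input from the marginal dynamics whatsoever. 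You should replace your $n$-step pairing by this one-step contraction-plus-induction scheme; as written, your leafwise bound is not established.
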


We already saw that the first two points in the above proposition are
satisfied by the first return map of the Geometric Lorenz system. In the
Appendix I we will prove that also the above item 3 is satisfied by the family of systems described in the introduction, containing the Geometric Lorenz one.

We point out that this is the hard part of the proof that Lorenz like maps
have exponential decay of correlations and this will be done by a sort of
Lasota-Yorke inequality. Putting together all the necessary assumptions, this
prove Theorem A in the introduction.

Before the proof of Theorem \ref{resuno} we  make the following remark
which is a simple but important fact implied by the uniform contraction on
stable leaves

\begin{remark}
\label{remark}Under the above assumptions, let us consider a leaf $\gamma $
and two probability measures $\mu $, $\nu $ on it. Then 
\begin{equation*}
W_{1}(F^{\ast }(\mu ),F^{\ast }(\nu ))\leq \lambda W_{1}(\mu ,\nu ).
\end{equation*}

\begin{proof}
This is because the map is uniformly contracting on each leaf. If $g$ is $1$%
-Lipschitz on $F(\gamma )$ then $g(F(\ast ))$ is $\lambda $-Lipschitz on $%
\gamma $. This implies that 
\begin{equation*}
|\int_{F(\gamma )}g~d(F^{\ast }\mu )-\int_{F(\gamma )}g~d(F^{\ast }\nu
)|=|\int_{\gamma }g\circ F~d\mu -\int_{\gamma }g\circ F~d\nu |\leq \lambda
W_{1}(\mu ,\nu )
\end{equation*}%
finishing the proof.
\end{proof}
\end{remark}

\begin{proof}
(of Theorem \ref{resuno}) Let us consider $\nu =f\mu $ with $f\geq 0$ being $%
\ell -$Lipschitz and $\int fd\mu =1$ (remark that this implies $\ell \geq 1 $). The strategy is to use Proposition \ref%
{prod} and find exponentially decreasing bounds for $\epsilon $ and $\delta $
so that we can estimate the Wasserstein distance between $\mu $ and iterates
of $f\mu $ and then apply Proposition \ref{prop:1} to deduce decay of
correlations from the distance. Let us consider the leaf $\gamma _{x}$ with coordinate $x$.
The density $\overline{f}$ , by item 3 has bounded variation and 
$|| \overline{f}||_{BV}\leq K\ell +1\leq (K+1)\ell $. Let $\nu _{x}=\overline{f}m$ the measure on the $x$-axis with density $\overline{f}$ (as before $m$ is the Lebesgue measure). Let us consider the base map $T$. Let $g\in
L^{1}([-\frac{1}{2},\frac{1}{2}])$. Since $|\int g~d(T^{\ast n}(\nu
_{x}))-\int g~d\mu _{x}|=|\int g(T^{n}(x))\overline{f}(x)dm-\int g(x)d\mu
_{x}|$, by equation (\ref{l1bv})%
\begin{equation*}
|\int gd(T^{\ast n}(\nu _{x}))-\int gd\mu _{x}|\leq \Vert g\Vert
_{L_{1}}\cdot \Vert \overline{f}\Vert _{BV}\cdot C\cdot e^{-\lambda n},
\end{equation*}%
implying that $\sup_{\Vert g\Vert _{\infty }\leq 1}|\int gdT^{\ast n}(\nu
_{x})-\int gd\mu _{x}|\leq \Vert \overline{f}\Vert _{BV}\cdot C\cdot
e^{-\lambda n}\leq (K+1)\ell C\cdot e^{-\lambda n}$ and hence we see that item
(2) at Proposition \ref{prod} is satisfied with an exponential bound
depending on the Lipschitz constant $\ell $ of $f$.

Let us consider $\nu ^{n}=F^{\ast n}\nu $ again. Since, as said before the
map $F$ sends vertical leaves into vertical ones then there is a family of
probability measures $\nu _{\gamma }^{n}$ on vertical leaves such that%
\begin{equation*}
(F^{\ast n}\nu )(g)=\int_{\gamma \in I}\int_{\gamma }g(\ast )d\nu _{\gamma
}^{n}d((T^{\ast n}(\nu _{x}))).
\end{equation*}%
To satisfy item (1) at Proposition \ref{prod} and hence conclude the
statement we only have to prove that there are $C_{2},\lambda _{2}$ s.t.%
\begin{equation*}
\forall \gamma \,\,\,~W_{1}(\nu _{\gamma }^{n},\mu _{\gamma })\leq
C_{2}\cdot e^{-\lambda _{2}n}
\end{equation*}
this is because of uniform contraction on stable leaves.

Indeed, by remark \ref{remark}, if $\nu _{\gamma }$ and $\rho _{\gamma }$
are the two probability measures on the leaf $\gamma $ then the measures $%
F^{\ast }(\nu _{\gamma }),F^{\ast }(\rho _{\gamma })$ on the contracting
leaf $F(\gamma )$ are such that%
\begin{equation*}
W_{1}(F^{\ast }(\nu _{\gamma }),F^{\ast }(\rho _{\gamma }))\leq \lambda
\cdot W_{1}(\nu _{\gamma },\rho _{\gamma }).
\end{equation*}%
Now let us consider $F^{-1}(\gamma )=\gamma _{1}\cup \gamma _{2}...\cup
\gamma _{k}$ \ and apply the above inequality to estimate the distance of
iterates of the measure on the leaves. For simplicity let us show the case
where the pre-image of a leaf consists of two leaves as it happen in the
Geometric Lorenz system, the case where the pre-image consists of more leaves
is analogous: let hence $F^{-1}(\gamma )=\gamma _{1}\cup \gamma _{2}$, after
one iteration of $F^{\ast }$ on  $\nu $ and $\mu $ the "new"
measures $\nu _{\gamma }^{1}=(F^{\ast }(\nu ))_{\gamma }$ and $\mu _{\gamma
} $ (which is equal to $(F^{\ast }(\mu ))_{\gamma }$ because $\mu $ is
invariant) on the leaf $\gamma $ will be a convex combination of the images
of the "old" measures on $\gamma _{1}$ and $\gamma _{2}$%
\begin{equation*}
\nu _{\gamma }^{1}=a\cdot F^{\ast }(\nu {}_{\gamma _{1}})+b\cdot F^{\ast
}(\nu {}_{\gamma _{2}}),
\end{equation*}%
\begin{equation}
\mu _{\gamma }=a\cdot F^{\ast }(\mu {}_{\gamma _{1}})+b\cdot F^{\ast }(\mu
{}_{\gamma _{2}})
\end{equation}%
with $a+b=1,a,b\geq 0$ \ (the second equality is again because $\mu $ is
invariant). By the triangle inequality (remark \ref{conv})

\begin{equation*}
W_{1}(\nu _{\gamma }^{1},\mu _{\gamma })\leq a\cdot W_{1}(F^{\ast }(\nu
{}_{\gamma _{1}}),F^{\ast }(\mu {}_{\gamma _{1}}))+b\cdot W_{1}(F^{\ast
}(\nu {}_{\gamma _{2}}),F^{\ast }(\mu {}_{\gamma _{2}}))
\end{equation*}

and by remark \ref{remark} 
\begin{equation*}
W_{1}(\nu _{\gamma }^{1},\mu _{\gamma })\leq \lambda (a\cdot W_{1}(\nu
_{\gamma _{1}},\mu _{\gamma _{1}})+b\cdot W_{1}(\nu _{\gamma _{2}},\mu
_{\gamma _{2}}))
\end{equation*}%
hence 
\begin{equation*}
W_{1}(\nu _{\gamma }^{1},\mu _{\gamma })\leq \lambda \sup_{\gamma
}(W_{1}(\nu {}_{\gamma },\mu _{\gamma })).
\end{equation*}%
The same can be done in the case when the pre-image $F^{-1}(\gamma )=\gamma
_{1}$ is only one leaf or more than two, hence by induction $W_{1}(\nu
_{\gamma }^{n},\mu _{\gamma })<\lambda ^{n}$, and the exponential bound on
the distance of iterates on the leaves (item 1 of Proposition \ref{prod}) is
provided.
\end{proof}


 \section{Hitting time: flow and section}\label{sec:hittingtime}

We now consider again a Lorenz like flow, with integrable return
time, i.e. a flow $X^t$ having a transversal section $\Sigma $ whose first return map satisfies the assumptions of Theorem \ref{resuno}  and the
return time is integrable, as before. As before $F:\Sigma\setminus
\Gamma \to \Sigma$ is the first return map associated.

Let $x, x_{0}\in \RR^3$ and
\begin{equation*}
\tau _{r}^{X^t}(x,x_{0})=\inf \{t\geq 0 | X^{t}(x)\in B_r(x_{0})\}
\end{equation*}
be the time needed for the $X$-orbit of a
point $x$ to enter for the {\em first time} in a ball
$B_{r}(x_{0})$. The number $\tau_{r}^{X^t}(x,x_{0})$
is the {\em{hitting time associated to}} the flow $X^t$ and $B_r(x_0)$.

If $x, x_{0}\in \Sigma$ and $B_{r}^{\Sigma}(x_{0})=B_r(x_0)\cap\Sigma$,
we define
$$\tau _{r}^{\Sigma }(x,x_{0})=\min\{n\in\NN^+; F^n(x)\in B_{r}^{\Sigma}(x_{0})\}:$$
the {\em{hitting time associated to}} the discrete
system $F$.

Given any $x$ we recall that we denoted with $t(x)$ the first strictly positive time, such that $X^{t(x)}(x)\in\Sigma$ (the {\em return time} of $x$ to $\Sigma$).
A relation between ${\tau _{r}}^X(x,x_{0})$ and $\tau _{r}^{\Sigma }(x,x_{0})$ is
given by

\begin{proposition}\label{hittingsecaoelivre} Under the above assumptions, if $\int_{\Sigma }t(x)~d\mu _{F}<\infty$, then, there is $K\geq 0$ and a set $A\subset \Sigma$ having full $\mu_{F} $ measure such that for each $x_0\in\Sigma$, $x\in A$  
\begin{equation}\label{eq:relationhitting}
 c(x,r)\cdot \tau_{Kr}^{\Sigma }(x,x_{0})\cdot \int_{\Sigma }t(x)~d\mu _{F}\leq \tau_{r}^{X^t}(x,x_{0})\leq c(x,r)\cdot
\tau_{r}^{\Sigma }(x,x_{0})\cdot \int_{\Sigma }t(x)~d\mu _{F}
\end{equation}%
with $c(x,r)\rightarrow 1$ as $r\rightarrow 0$.
\end{proposition}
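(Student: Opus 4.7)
The plan is to compare $\tau_r^{X^t}(x,x_0)$ with the return-time sum $T_n(x):=\sum_{i=0}^{n-1} t(F^i(x))$ evaluated at $n=\tau_r^{\Sigma}(x,x_0)$, and then to use Birkhoff's ergodic theorem to convert $T_n(x)$ into $n\cdot t_0$ with $t_0:=\int_{\Sigma}t\,d\mu_F<\infty$. The first ingredient is a flow-box near $x_0\in\Sigma$: since $X^t$ is transverse to $\Sigma$ (hypothesis 2 of the introduction), a standard tubular neighborhood argument (uniform in $x_0\in\Sigma$ by compactness) yields $r_0>0$ and constants $C>0$, $K\geq 1$ such that, for every $0<r\leq r_0$ and every $p\in B_r(x_0)$, there is a unique $s(p)\in[-Cr,Cr]$ with $X^{s(p)}(p)\in\Sigma\cap B_{Kr}(x_0)=B_{Kr}^{\Sigma}(x_0)$; conversely $B_r^{\Sigma}(x_0)\subseteq B_r(x_0)$.

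Next I would fix the exceptional set. Let $A\subset\Sigma$ be the full-$\mu_F$-measure set of $x$ for which Birkhoff's theorem, applied to the integrable observable $t$, gives $T_n(x)/n\to t_0$. For $x_0\in\Sigma$ fixed, the set of $x\in A$ whose forward $F$-orbit actually hits $x_0$ is a countable union of single-point preimages and is $\mu_F$-negligible; for every other $x\in A$ the section hitting time $\tau_r^{\Sigma}(x,x_0)$ grows to $+\infty$ as $r\to 0$, which is what is required to activate Birkhoff.

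For the upper bound, set $n:=\tau_r^{\Sigma}(x,x_0)$, so $F^n(x)\in B_r^{\Sigma}(x_0)\subseteq B_r(x_0)$; writing $F^n(x)=X^{T_n(x)}(x)$ gives $\tau_r^{X^t}(x,x_0)\leq T_n(x)=\tau_r^{\Sigma}(x,x_0)\cdot(T_n(x)/n)$, and Birkhoff yields $T_n(x)/n=t_0\cdot c_+(x,r)$ with $c_+(x,r)\to 1$. For the lower bound, set $\tau:=\tau_r^{X^t}(x,x_0)$, so $X^\tau(x)\in B_r(x_0)$ and the flow-box supplies $|s|\leq Cr$ with $X^{\tau+s}(x)\in B_{Kr}^{\Sigma}(x_0)$, hence $\tau+s=T_m(x)$ for some index $m$ with $F^m(x)\in B_{Kr}^{\Sigma}(x_0)$. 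This forces $m\geq n_0:=\tau_{Kr}^{\Sigma}(x,x_0)$, and monotonicity of $(T_j)$ gives
\[
\tau \;\geq\; T_m-Cr \;\geq\; T_{n_0}(x)-Cr \;=\; n_0\cdot \frac{T_{n_0}(x)}{n_0}-Cr.
\]
Since $n_0\to\infty$ as $r\to 0$, the bounded correction $Cr$ is asymptotically negligible and Birkhoff again delivers $T_{n_0}(x)/n_0=t_0\cdot c_-(x,r)$ with $c_-(x,r)\to 1$. Combining both inequalities yields (\ref{eq:relationhitting}) with a common factor $c(x,r)\to 1$ (obtained by taking $c(x,r)$ to dominate $c_+$ and be dominated by $c_-$, which is possible since both tend to $1$).

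The only delicate step is the uniformity of the flow-box: the constant $K$ must be independent of $r$ (and of $x_0$), so that entering $B_r(x_0)\subset\RR^3$ is genuinely equivalent, up to a short flight of length $O(r)$, to entering $B_{Kr}^{\Sigma}(x_0)$ on $\Sigma$. This is the only place where the transversality assumption is used, and it is precisely what forces the mismatch of radii ($Kr$ on the left, $r$ on the right) in the statement. Once $K$ is fixed, all remaining error terms are absorbed into the Birkhoff factor $c(x,r)\to 1$.
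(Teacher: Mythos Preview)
Your argument is correct and follows essentially the same route as the paper: bound $\tau_r^{X^t}$ between two Birkhoff sums of the return time $t$ evaluated at $\tau_r^{\Sigma}$ and $\tau_{Kr}^{\Sigma}$, then invoke the ergodic theorem to turn each sum into $n\cdot t_0$ times a factor tending to $1$. Your treatment is in fact slightly more explicit than the paper's: you spell out the flow-box constants $(r_0,C,K)$ and the $O(r)$ flight-time correction in the lower bound (the paper absorbs this into the asymptotic $c(x,r)$ without comment), and you justify why $\tau_r^{\Sigma}(x,x_0)\to\infty$ by noting that the set of $x$ whose $F$-orbit actually hits $x_0$ is $\mu_F$-null.
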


\begin{proof}
Let us assume that $x,x_{0}$ $\in $ $\Sigma $, $x\neq x_0 $ and $r\leq d(x,x_0 )$. Since the flow cannot hit the section near $x_0$ without  entering in a small ball of the space centered at $x_0$ before, then
$\tau_{r}^{\Sigma }(x,x_{0})$ and $\tau_{r}^{X^t}(x,x_{0})$ are related by
\begin{equation}\label{sum1}
\tau_{r}^{X^t}(x,x_{0})\leq \sum_{i=0}^{\tau _{r}^{\Sigma }(x,x_{0})}t(F^{i}(x)).
\end{equation}
Moreover, since the section is supposed to be transversal to the flow, there is a $K$ such that 
\begin{equation}\label{sum2}
\tau_{r}^{X^t}(x,x_{0})\geq \left[\sum_{i=0}^{\tau _{Kr}^{\Sigma }(x,x_{0})}t(F^{i}(x)) \right].
\end{equation}
The last inequality follows by the fact that if the flow at some time crosses the ball centered at $x_0$ then after a time $e(r)$ it will cross the section at a distance less than $Kr$, where $K$ depends on the angle between the flow and the section (when $r$ is small approximate  locally the flow by a constant one).

The above sums are Birkhoff sums of the observable $t$ on
the $F$-orbit of $x$ and $\mu _{F}$ is ergodic.
Then there is a full measure set $A\subset \Sigma$ (and $x_0 \notin A$)  such that
\begin{equation*}
\frac{1}{n}\sum_{i=0}^{n}t(F^{i}(x))\longrightarrow \int_{\Sigma }t(x)~d\mu _{F},\quad \mbox{as}\quad n\to\infty
\end{equation*}%
for $x\in A$. Hence
\begin{equation*}
\frac{1}{\tau _{r}^{\Sigma }(x,x_{0})}\sum_{i=0}^{\tau _{r}^{\Sigma }(x,x_{0})}t(F^{i}(x))\longrightarrow \int_{\Sigma }t(x)~d\mu _{F},\quad \mbox{as}\quad n\to\infty
\end{equation*}
for $x\in A$. Thus we get that for each $x\in A$
\begin{equation}\label{eq:finale}
\sum_{i=0}^{\tau _{r}^{\Sigma }(x,x_{0})}t(F^{i}(x))=c(x,r)\cdot \tau _{r}^{\Sigma
}(x,x_{0})\cdot \int_{\Sigma }t(x)~d\mu _{F}
\end{equation}%
with $c(x,r)\rightarrow 1$ as $r\rightarrow 0$.
Combining  Equations (\ref{sum1},\ref{sum2}) and (\ref{eq:finale}) we get (\ref{eq:relationhitting}).
\end{proof}

Let $\pi$ be the projection on $\Sigma $ defined in Proposition \ref{dimensao}.
The above statement implies the following

\begin{proposition}\label{c:relacaodimensao}
There is a full measure set $B\subset {\mathbb R}^3$ (for the flow invariant measure) such that if $x_0\in  {\mathbb R}^3$ is regular and $x\in B$ it holds (provided the limits exist)
\begin{equation}\label{eq:tempos}
\lim_{r\rightarrow 0}\frac{\log \tau_{r}^{X^t}(x,x_{0})}{-\log r}%
=\lim_{r\rightarrow 0}\frac{\log \tau _{r}^{\Sigma }(\pi(x),\pi(x_{0}))}{-\log r}.
\end{equation}
\end{proposition}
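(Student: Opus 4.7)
The plan is to reduce the flow hitting time $\tau_r^{X^t}(x,x_0)$ for a regular $x_0 \in \mathbb{R}^3$ and $\mu_X$-a.e. $x \in \mathbb{R}^3$ to the section hitting time $\tau_r^\Sigma(\pi(x),\pi(x_0))$ in two moves—first replacing the target $x_0$ by $\pi(x_0)\in\Sigma$, then replacing the source $x$ by $\pi(x)\in\Sigma$—and finally invoking Proposition \ref{hittingsecaoelivre}. The full-measure set $B\subset\mathbb{R}^3$ will be defined as the flow-saturation of the set $A\subset\Sigma$ of Proposition \ref{hittingsecaoelivre}; this has full $\mu_X$-measure by the suspension construction $\mu_X=\phi_{*}(\mu_F\times dt)$.

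For the target reduction, I use that $x_0$ is regular: there is $t_0\geq 0$ such that $X^{t_0}$ realizes a local diffeomorphism from a neighborhood of $\pi(x_0)$ to one of $x_0$. Local bi-Lipschitz constants $C_1\leq C_2$ give, for all sufficiently small $r$,
\begin{equation*}
B_{r/C_2}(\pi(x_0))\subset X^{-t_0}\bigl(B_r(x_0)\bigr)\subset B_{r/C_1}(\pi(x_0)).
\end{equation*}
Since $X^s(x)\in B_r(x_0)$ is equivalent to $X^{s-t_0}(x)\in X^{-t_0}(B_r(x_0))$, this yields the sandwich
\begin{equation*}
t_0+\tau_{r/C_1}^{X^t}\bigl(x,\pi(x_0)\bigr)\leq \tau_r^{X^t}(x,x_0)\leq t_0+\tau_{r/C_2}^{X^t}\bigl(x,\pi(x_0)\bigr).
\end{equation*}
Taking $\log$ and dividing by $-\log r$, the additive $t_0$ is negligible (the hitting times diverge), and rescaling $r$ by a constant does not affect the limit since $\log(r/C_i)/\log r\to 1$. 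So the limits $\lim \log\tau_r^{X^t}(x,x_0)/(-\log r)$ and $\lim \log\tau_r^{X^t}(x,\pi(x_0))/(-\log r)$ coincide.

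For the source reduction, observe $x=X^{s}(\pi(x))$ for a bounded $s\geq 0$, hence $X^t(x)=X^{t+s}(\pi(x))$. For $r$ small enough that $\tau_r^{X^t}(\pi(x),\pi(x_0))\geq s$, this gives $\tau_r^{X^t}(x,\pi(x_0))=\tau_r^{X^t}(\pi(x),\pi(x_0))-s$, so again the logarithmic limit is unchanged. Now both source and target are in $\Sigma$, and I apply Proposition \ref{hittingsecaoelivre} with $\pi(x)\in A$:
\begin{equation*}
c(\pi(x),r)\,\tau_{Kr}^\Sigma\bigl(\pi(x),\pi(x_0)\bigr)\,t_0^{*} \leq \tau_r^{X^t}\bigl(\pi(x),\pi(x_0)\bigr) \leq c(\pi(x),r)\,\tau_r^\Sigma\bigl(\pi(x),\pi(x_0)\bigr)\,t_0^{*},
\end{equation*}
where $t_0^{*}=\int_\Sigma t\,d\mu_F>0$ is a positive constant and $c(\pi(x),r)\to 1$. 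Taking $\log$ and dividing by $-\log r$, the multiplicative constant $t_0^{*}$ and the factor $c\to 1$ contribute $o(1)$, and again the scaling $Kr\leftrightarrow r$ is absorbed into the logarithm; the lower and upper bounds share the same limit, which by assumption of existence is $\lim \log\tau_r^\Sigma(\pi(x),\pi(x_0))/(-\log r)$.

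The main obstacle is really bookkeeping: making sure the ``$o(1)$ in the log ratio'' arguments are valid simultaneously for the additive shifts ($t_0$, $s$), the multiplicative shifts ($C_1,C_2,K,t_0^{*}$), and the decaying factor $c(\pi(x),r)$. The hypothesis that one of the limits exists propagates through each sandwich and keeps the limits well-defined. Defining $B=\pi^{-1}(A)$ and verifying $\mu_X(B)=1$ from the flow-invariance and the suspension formula for $\mu_X$ closes the proof.
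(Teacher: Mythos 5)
Your proposal is correct and follows essentially the same route as the paper: both reduce to Proposition \ref{hittingsecaoelivre} on the section and use the local bi-Lipschitz property of the flow at the regular point $x_0$ to transfer the target from $x_0$ to $\pi(x_0)$, with the additive and multiplicative constants absorbed in the logarithmic ratio. The only difference is that you spell out the source reduction $x\mapsto\pi(x)$ explicitly, which the paper handles implicitly by setting $B=\pi^{-1}(A)$.
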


\begin{proof}
The above Proposition implies that if $x_{0},x\in \Sigma $  and $x\in A$ then \begin{equation}
\lim_{r\rightarrow 0}\frac{\log \tau_{r}^{X^t}(x,x_{0})}{-\log r}
=\lim_{r\rightarrow 0}\frac{\log \tau _{r}^{\Sigma }(x,x_{0})}{-\log r}.
\end{equation}
  If $
x_{0}\in \mathbb{R}^{3}$ is a regular point, the flow $X$ induces a
bilipschitz homeomophism from a neighborhood of $\pi (x_{0})\in \Sigma $ to a neighborhood of  $x_{0}$.

Hence there is $K\geq 1$ such that $$\tau _{K^{-1}r}^{X}(x,\pi
(x_{0}))+Const\leq \tau _{r}^{X}(x,x_{0})\leq \tau _{Kr}^{X}(x,\pi
(x_{0}))+Const$$ where $Const$ represents the time which is needed to go from 
$\pi (x_{0})$ to $x_{0}$ by the flow.  This is also true for each $x\in B=\pi ^{-1}(A)$.
Extracting logarithms and taking the
limits we get the required result.
\end{proof}

We recall that  (see Section \ref{sec:SBRfluxo}) the assumption $\int_{\Sigma }t(x)~d\mu_{F}<\infty$ is verified for  the geometric Lorenz flow. Hence these results applies for this example.


 \section{A logarithm law for the hitting time}
\label{sec:loglaw}


In this section we give the main result for the behavior of the hitting time on Lorenz like flows. First let us recall a result on discrete time systems.

Let $(Y,T,\mu )$ be a measure preserving (discrete time) dynamical system.
We say that $(X,T,\mu)$ has super-polynomial decay of correlations with respect to Lipschitz observables if
$$
\left|\int\varphi\circ T^n\psi \cdot d\mu -
\int \varphi \cdot d\mu \cdot \int \psi\cdot d\mu \right|
\leq \|\varphi\|\cdot \|\psi\|\cdot \theta_n,
$$
where $\lim_n\theta_n\cdot n^p=0$ for all $p> 0$ and $\|\cdot\|$ is the Lipschitz norm.

In \cite{Ga07} the following fact is proved for discrete time systems:
\begin{theorem}\label{qui} Let $(Y,T,\mu)$ a measure preserving transformation having superpolynomial decay of correlations as above.
For each $x_{0}\in Y$ such that $d_{\mu }(x_{0})$ is defined
$$
\lim_{r\rightarrow 0}\frac{\log \tau _{r}(x,x_{0})}{-\log r}%
=d_{\mu }(x_{0})
$$
for $\mu$-almost each $x\in Y$.
\end{theorem}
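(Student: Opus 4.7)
The plan is to establish the two inequalities $\limsup \leq d_\mu(x_0)$ and $\liminf \geq d_\mu(x_0)$ separately, working along the geometric subsequence $r_k = 2^{-k}$ and extending to all $r$ by the monotonicity of $r \mapsto \tau_r(x,x_0)$ (smaller targets take longer to hit).

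The lower bound is the easy half. Fix $\epsilon > 0$ and let $N_k = \lfloor r_k^{-d_\mu(x_0)+\epsilon}\rfloor$. By $T$-invariance of $\mu$ and the union bound,
\begin{equation*}
\mu(\{x\colon \tau_{r_k}(x,x_0) \leq N_k\}) \leq N_k\,\mu(B_{r_k}(x_0)),
\end{equation*}
and since $d_\mu(x_0)$ exists this is bounded by $r_k^{\epsilon/2}$ for $k$ large, which is geometrically summable. The Borel--Cantelli lemma gives $\tau_{r_k}(x,x_0) > N_k$ eventually for $\mu$-a.e.\ $x$, so $\liminf \geq d_\mu(x_0) - \epsilon$; letting $\epsilon \to 0$ concludes this half.

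For the upper bound, fix $\epsilon > 0$ and set $N_k = \lceil r_k^{-d_\mu(x_0)-\epsilon}\rceil$. I would replace the indicator of $B_{r_k}(x_0)$ by a Lipschitz bump $\phi_k$ satisfying $\mathbf 1_{B_{r_k/2}(x_0)} \leq \phi_k \leq \mathbf 1_{B_{r_k}(x_0)}$ with $L(\phi_k) \leq C/r_k$, and study the sum $S_k(x) = \sum_{n=1}^{N_k}\phi_k(T^n x)$. The event $\{\tau_{r_k}(x,x_0) > N_k\}$ is contained in $\{S_k = 0\}$, so Chebyshev's inequality yields $\mu(S_k = 0) \leq \mathrm{Var}(S_k)/(\mathbb E[S_k])^2$. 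Here $\mathbb E[S_k] = N_k\bar\phi_k$ with $\bar\phi_k = \int \phi_k \, d\mu$, and existence of the local dimension gives $\bar\phi_k \geq \mu(B_{r_k/2}(x_0)) \geq r_k^{d_\mu(x_0)+\epsilon/4}$ for $k$ large, so $\mathbb E[S_k] \gtrsim r_k^{-3\epsilon/4}$.

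The main obstacle is the variance bound, since $L(\phi_k)$ blows up like $1/r_k$ and the naive correlator bound $C r_k^{-2}\theta_d$ is wasteful at small lags. I would introduce a cutoff $D_k$ and split the off-diagonal sum, using the trivial inequality $\int\phi_k\cdot(\phi_k\circ T^d)\,d\mu \leq \bar\phi_k$ for $d \leq D_k$ and the decay-of-correlations bound for $d > D_k$, obtaining
\begin{equation*}
\mathrm{Var}(S_k) \lesssim N_k\,D_k\,\bar\phi_k + N_k\,r_k^{-2}\sum_{d > D_k}\theta_d.
\end{equation*}
Dividing by $(\mathbb E[S_k])^2$, the first contribution is $O(D_k/(N_k\bar\phi_k))$ and is geometrically small provided $D_k = \lceil r_k^{-\epsilon/4}\rceil$; for the second, super-polynomial decay allows me to fix $p$ with $(d_\mu(x_0)+2)/(p-1) < \epsilon/4$, after which $\theta_d = o(d^{-p})$ makes $\sum_{d > D_k}\theta_d$ negligible compared to $r_k^{d_\mu(x_0)+2}$, rendering this contribution summable in $k$. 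A second application of Borel--Cantelli gives $\tau_{r_k}(x,x_0) \leq N_k$ eventually for $\mu$-a.e.\ $x$, and the monotonicity observation plus $\epsilon \to 0$ completes the upper bound.
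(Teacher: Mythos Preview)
The paper does not actually prove this theorem: it is quoted from \cite{Ga07} (see the sentence ``In \cite{Ga07} the following fact is proved for discrete time systems'' immediately preceding the statement), so there is no proof in the present paper to compare against.

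Your argument is correct and is essentially the one carried out in the cited reference. The lower bound via the trivial union bound and Borel--Cantelli is standard; the upper bound via the second-moment method with a Lipschitz mollification of $\mathbf 1_{B_{r_k}(x_0)}$, the split of the covariance sum at a cutoff lag $D_k$, and the use of super-polynomial decay to kill the far tail is exactly the mechanism in \cite{Ga07}. The only cosmetic point is that the bound $\mu(S_k=0)\leq \mathrm{Var}(S_k)/(\mathbb E[S_k])^2$ is not ``Chebyshev'' literally but follows from it via $\{S_k=0\}\subset\{|S_k-\mathbb E[S_k]|\geq \mathbb E[S_k]\}$; you clearly have this in mind. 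The choice of $D_k$ and of $p$ is adequate to make both contributions summable in $k$, and the passage from the dyadic sequence $r_k=2^{-k}$ to general $r$ via monotonicity of $r\mapsto\tau_r$ is the right closing step.
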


Applying this  to the 2-dimensional system $(\Sigma, F, \mu_{F})$ (which satisfies the assumptions of Theorem \ref{resuno} since  ans hence has exponential decay of correlations). We  conclude the following

\begin{corollary}\label{th:stefano}
Let $F:\Sigma \to \Sigma $ be a map with an invariant measure $\mu
_{F}$ satisfying the assumptions of Theorem \ref{resuno}. For each
$x_0\in \Sigma$ such that $d_{\mu _{F}}(x_{0})$ exists then
$$
\lim_{r\rightarrow 0}\frac{\log \tau _{r}^{\Sigma }(x,x_{0})}{-\log r}%
=d_{\mu _{F}}(x_{0}).
$$
for $\mu_F$-almost $x\in \Sigma$.
\end{corollary}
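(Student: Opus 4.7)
The plan is to invoke Theorem \ref{qui} directly with the discrete-time system $(\Sigma, F, \mu_F)$ at the given point $x_0\in\Sigma$. The only thing requiring verification is that this system has super-polynomial decay of correlations on pairs of Lipschitz observables, in the precise sense demanded by Theorem \ref{qui}.

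First I would recall that Theorem \ref{resuno} (Theorem A) already furnishes a stronger statement: there exist $C,\lambda>0$ such that, for every Lipschitz $g$ and every $f\in L^1(\mu_F)$,
\begin{equation*}
\left|\int g(F^n(x))\,f(x)\,d\mu_F - \int f\,d\mu_F\,\int g\,d\mu_F\right|
\leq C\cdot L(g)\cdot \|f\|_{L^1(\mu_F)}\cdot e^{-\lambda n}.
\end{equation*}

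Next, since $\Sigma$ is bounded with $\diam(\Sigma)=1$ and $\mu_F$ is a probability measure, any Lipschitz function $\psi$ on $\Sigma$ satisfies $\|\psi\|_{L^1(\mu_F)}\leq\|\psi\|_\infty\leq\|\psi\|_{lip}$, and similarly $L(\varphi)\leq\|\varphi\|_{lip}$. Substituting a Lipschitz $\psi$ in place of $f$ and $\varphi$ in place of $g$ in the bound above gives
\begin{equation*}
\left|\int \varphi(F^n)\,\psi\,d\mu_F - \int\varphi\,d\mu_F\int\psi\,d\mu_F\right|
\leq C\cdot\|\varphi\|_{lip}\cdot\|\psi\|_{lip}\cdot e^{-\lambda n}.
\end{equation*}
This matches the hypothesis of Theorem \ref{qui} with $\theta_n=Ce^{-\lambda n}$, which is exponential and a fortiori super-polynomial: $\theta_n\cdot n^p\to 0$ for every $p>0$.

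Finally, applying Theorem \ref{qui} to $(\Sigma,F,\mu_F)$ at the point $x_0$, where $d_{\mu_F}(x_0)$ is assumed to exist, yields the claimed equality
\begin{equation*}
\lim_{r\to 0}\frac{\log\tau_r^\Sigma(x,x_0)}{-\log r}=d_{\mu_F}(x_0)
\end{equation*}
for $\mu_F$-almost every $x\in\Sigma$. There is essentially no obstacle to overcome: the entire content of the proof is the observation that the mixed Lipschitz/$L^1$ decay furnished by Theorem \ref{resuno} upgrades trivially to a Lipschitz/Lipschitz decay by boundedness of $\Sigma$, after which Theorem \ref{qui} applies verbatim.
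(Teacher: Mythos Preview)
Your proposal is correct and follows exactly the approach the paper intends: Theorem \ref{resuno} gives exponential (hence super-polynomial) decay of correlations for $(\Sigma,F,\mu_F)$, so Theorem \ref{qui} applies directly at $x_0$. The paper states this in one line without spelling out the passage from the Lipschitz/$L^1$ estimate to the Lipschitz/Lipschitz estimate; your observation that $\|\psi\|_{L^1(\mu_F)}\leq\|\psi\|_{lip}$ on the unit-diameter square makes that step explicit, but adds nothing new.
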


Now, if we consider a flow having such a map as its Poincar\'e section and integrable return time, we can construct as in Section \ref{sec:SBRfluxo} an SRB invariant measure $\mu_X$ for the flow.
 By Proposition \ref{c:relacaodimensao}, Corollary \ref{th:stefano} and Proposition \ref{dimensao} we can estimate the hitting time to balls for the flow by the corresponding estimation for the Poincar\'e map and we get our main result, which corresponds to
Theorem B in the introduction (where a set of sufficient assumptions on the map are listed):

\begin{theorem}\label{main1}
If $X^t$ is a Lorenz like flow, that is a flow having a transversal
section, with a Poincar\'e map satisfying the assumptions of
proposition \ref{resuno} and integrable return time, then for each regular
$x_{0}\in \RR^3$ such that $d_{\mu_X}(x_{0})$ exists, it holds
\begin{equation*}
\lim_{r\rightarrow 0}\frac{\log \tau_{r}^{X^t}(x,x_{0})}{-\log r}=d_{\mu_X}(x_{0})-1
\end{equation*}
for $\mu_X$-almost each $x\in \RR^3.$
\end{theorem}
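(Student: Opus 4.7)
The plan is to assemble the theorem from three results already established in the excerpt: Corollary~\ref{th:stefano} (logarithm law on the section), Proposition~\ref{c:relacaodimensao} (reduction of flow hitting time to section hitting time), and Proposition~\ref{dimensao} (relation between local dimensions of $\mu_X$ and $\mu_F$). Since each of these has already done the heavy lifting, the proof is essentially a bookkeeping exercise of chaining them together and checking that the exceptional sets remain negligible.

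First I would fix a regular $x_0\in\RR^3$ such that $d_{\mu_X}(x_0)$ exists, and let $\pi(x_0)\in\Sigma$ be its projection to the Poincar\'e section as in Proposition~\ref{dimensao}. By Proposition~\ref{dimensao}, the existence of $d_{\mu_X}(x_0)$ is equivalent to the existence of $d_{\mu_F}(\pi(x_0))$, and the two are related by
\begin{equation*}
d_{\mu_F}(\pi(x_0)) = d_{\mu_X}(x_0) - 1.
\end{equation*}
In particular the hypothesis of Corollary~\ref{th:stefano} is satisfied at $\pi(x_0)$.

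Next, the hypotheses on $F$ (Theorem~\ref{resuno}) being satisfied by assumption, Corollary~\ref{th:stefano} yields a full $\mu_F$-measure set $A_0\subset\Sigma$ such that for every $y\in A_0$,
\begin{equation*}
\lim_{r\to 0}\frac{\log \tau_r^{\Sigma}(y,\pi(x_0))}{-\log r}=d_{\mu_F}(\pi(x_0)).
\end{equation*}
Combined with the set $A$ of full $\mu_F$-measure supplied by Proposition~\ref{hittingsecaoelivre} (underlying Proposition~\ref{c:relacaodimensao}), the intersection $A\cap A_0$ still has full $\mu_F$-measure. Let $B_0=\pi^{-1}(A\cap A_0)$. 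Since $\mu_X$ is, by construction in Section~\ref{sec:SBRfluxo}, obtained (up to normalization) from $\mu_F\times dt$ via the bi-Lipschitz quotient/flow map, $B_0$ has full $\mu_X$-measure in the domain of regular points of the flow (which itself is of full $\mu_X$-measure).

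Finally, for any $x\in B_0$, Proposition~\ref{c:relacaodimensao} gives
\begin{equation*}
\lim_{r\to 0}\frac{\log \tau_r^{X^t}(x,x_0)}{-\log r}
=\lim_{r\to 0}\frac{\log \tau_r^{\Sigma}(\pi(x),\pi(x_0))}{-\log r}
=d_{\mu_F}(\pi(x_0))=d_{\mu_X}(x_0)-1,
\end{equation*}
where the middle equality uses Corollary~\ref{th:stefano} at $\pi(x)\in A_0$ and the final equality is Proposition~\ref{dimensao}. This is the desired logarithm law. The only subtle point, and hence the main thing to verify carefully, is the measure-theoretic step of pulling the $\mu_F$-full-measure set $A\cap A_0$ back through $\pi$ to a $\mu_X$-full-measure set of starting points $x\in\RR^3$; this is where the bi-Lipschitz structure of the suspension and the fact that $\mu_X=\phi_\ast(\mu_F\times dt)/t_0$ play their role, but both are already built into Section~\ref{sec:SBRfluxo}, so no additional estimate is needed.
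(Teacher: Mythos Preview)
Your proposal is correct and follows essentially the same route as the paper: the paper's proof consists of a single sentence invoking Proposition~\ref{c:relacaodimensao}, Corollary~\ref{th:stefano}, and Proposition~\ref{dimensao} in exactly the way you chain them. Your write-up is in fact more careful than the paper's, since you explicitly track the full-measure sets and justify the pullback via $\pi$ using the suspension construction of $\mu_X$, details the paper leaves implicit.
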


\section{ Quantitative recurrence for Lorenz like systems}
\label{sec:recurrpoli}
\label{sec:recurrLorenz}

We now recall a general result proved by Saussol in \cite{S06} about
quantitative recurrence in order to apply it to a Lorenz like flow.
The result shows that the power law behavior of the return time in small balls
can be estimated by function of the local dimension if the system has
fast enough decay of correlations.

Given a set $A$, we denote the boundary of $A$ as  $\partial A$.

\begin{theorem}\cite[Thm 4, Lemma 13]{S06}.
\label{th:Saussol}
Let $(Y,T,\mu )$ be a measure preserving dynamical system, where $Y$ is a Borel subset of some euclidean space.
Assume that the entropy $h_{\mu }(T)>0$
and $T$ is such that there exists a partition ${\cA}$ (modulo $\mu $)
into open sets  such that for each $A\in \cA$ the map $T|_A$ is Lipschitz
with constant $L_{T}(A)$.
Furthermore, suppose that

\begin{enumerate}
\item the set  ${\cS(\cA)}=\cup\{\partial A\in \cA\}$ is such that there are constants $c>0$ and $a>0$ so that
\begin{equation*}
\mu \left(\{x\in X:\dist(x,\cS(\cA) )<\epsilon \}\right)< c\cdot \epsilon^{a}.
\end{equation*}
\item the average Lipschitz exponent
\begin{equation*}
\sum_{A\in \cA} \mu (A)\log^{+}L_{T}(A)
\end{equation*}
is finite,
\item the decay of correlation of $T$ is super-polynomial with respect to Lipschitz observables.
\end{enumerate}
Then
\begin{equation}
\liminf_{r\to 0}\frac{\log \tau_r(x,x)}{-\log r}={d}^-_{\mu }(x)~,\quad \mbox{and}\quad \,\,
\limsup_{r\to 0}\frac{\log \tau_r(x,x)}{-\log r} ={d}^+_{\mu }(x)~a.e.
\end{equation}
\end{theorem}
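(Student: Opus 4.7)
The proof strategy is to establish two matching pairs of inequalities. The basic heuristic is $\tau_r(x,x)\sim 1/\mu(B_r(x))$; combined with $\mu(B_r(x))\sim r^{d_\mu(x)}$ from the definition of local dimension, this would yield the claimed identities after taking logarithms. The $\limsup$ and $\liminf$ statements are handled separately, each further split into a ``fast return'' bound and a ``slow return'' bound.

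For the upper bound on the ratio $\log \tau_r(x,x)/(-\log r)$ (i.e., showing that the return time is not too large), I would invoke quantitative Poincar\'e recurrence in the spirit of Kac: $\int_{B_r(x_0)}\tau_{B_r(x_0)}\,d\mu=1$ forces $\tau_r(x,x_0)\leq C/\mu(B_r(x_0))$ for most $x$ in $B_r(x_0)$. Centering on the orbit and combining with a Borel--Cantelli argument along a geometric sequence of radii $r_n$, one obtains, for $\mu$-a.e.\ $x$, a bound $\tau_{r_n}(x,x)\leq r_n^{-\overline{d}_\mu(x)-\epsilon}$. Taking logarithms and letting $\epsilon\to 0$ produces the $\limsup\leq \overline d_\mu(x)$ inequality, and an analogous subsequence argument along $r_n$ realizing the $\liminf$ of $\log\mu(B_r(x))/\log r$ yields $\liminf\leq\underline d_\mu(x)$.

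The harder direction is the lower bound on $\log \tau_r(x,x)/(-\log r)$: the orbit cannot return too quickly. I would estimate, for fixed $r$ and $N$,
\[
\mu\bigl(\{x:\tau_r(x,x)\leq N\}\bigr) \;\leq\; \sum_{k=1}^{N}\mu\bigl(\{x: d(x,T^k x)\leq r\}\bigr).
\]
Each summand is controlled via decay of correlations applied to a Lipschitz mollification $\varphi_r$ of the indicator of $B_r(\cdot)$, producing a main term of order $\mu(B_r)^2$ plus an error $\theta_k\,\|\varphi_r\|_{\mathrm{Lip}}^2\sim \theta_k r^{-2}$. Super-polynomial decay of $\theta_k$ allows summation over $k$ up to $N=r^{-\underline d_\mu(x)+\epsilon}$ while keeping the total small. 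A Borel--Cantelli argument along a dyadic sequence of radii then gives $\tau_r(x,x)\geq r^{-\underline d_\mu(x)+\epsilon}$ almost surely for small $r$, and monotonicity of $\tau_r$ in $r$ lets one pass from the sequence to the full $\liminf$; the analogous argument along radii realizing $\overline d_\mu(x)$ furnishes the matching inequality for $\limsup$.

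The main obstacle is replacing indicators of diagonal neighborhoods by Lipschitz observables in the presence of the discontinuities of $T$. This is where the hypotheses are used in a delicate way: condition (1) controls the measure of $\epsilon$-neighborhoods of the singular set $\mathcal{S}(\cA)$, so the orbit spends negligible time where the Lipschitz structure fails; condition (2) ensures that iterated Lipschitz norms of the regularization $\varphi_r\circ T^k$ do not blow up faster than a summable rate; and the positive entropy assumption rules out degenerate cases and guarantees that $\mu$ is genuinely non-atomic, so the program above is not vacuous. Balancing the exponent $\epsilon$ against the summability of $\theta_k r^{-2}$ in the Borel--Cantelli step is the technical heart of the argument.
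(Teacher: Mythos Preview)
The paper does not contain a proof of this statement: it is quoted verbatim from Saussol's article (the citation \cite[Thm 4, Lemma 13]{S06} in the header is the entire justification), and the paper only proceeds to verify that the hypotheses are met by the Lorenz first-return map. So there is no ``paper's own proof'' to compare your proposal against.

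That said, your outline is broadly the strategy of the original source: the upper bound on $\log\tau_r/(-\log r)$ comes from Kac-type mass considerations plus Borel--Cantelli along a geometric sequence of radii, and the lower bound comes from bounding $\mu\{x:d(x,T^kx)\le r\}$ via decay of correlations for Lipschitz approximants, summing in $k$, and invoking Borel--Cantelli again. One point where your sketch is imprecise is the role of positive entropy. It is not merely there to rule out atoms; in Saussol's argument it is used, via an Ornstein--Weiss / Brin--Katok type estimate, to control the probability of \emph{very short} returns (small $k$), for which the correlation bound $\theta_k r^{-2}$ is useless. The hypotheses (1) and (2) are used together to show that orbits stay away from $\mathcal S(\mathcal A)$ often enough that the piecewise Lipschitz structure controls $d(x,T^kx)$ on the relevant time scales; your description of (2) as bounding ``iterated Lipschitz norms of $\varphi_r\circ T^k$'' is not quite the mechanism --- rather, (2) ensures the logarithmic expansion rate is integrable so that, for typical $x$, the distortion over $k$ steps grows subexponentially and can be absorbed into the super-polynomial $\theta_k$.
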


Let us first show that the above theorem can be applied to the Geometric Lorenz system. 

\begin{lemma}
The first return map $(F,\Sigma ,\mu_F )$ of the Geometric Lorenz system (described in Section \ref{sec:Lorenzmodel}) satisfies the hypothesis of Theorem \ref{th:Saussol} above.
\end{lemma}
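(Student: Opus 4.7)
The plan is to verify the four ingredients of Theorem \ref{th:Saussol} — positive entropy, a Lipschitz partition with a mild boundary, summable average log-Lipschitz, and super-polynomial decay of correlations — for the Poincar\'e map $F$ of the Geometric Lorenz system. Since $F$ preserves the vertical foliation and $T$ has unbounded derivative only at the critical point $c=0$, a single partition into the two smoothness branches $\Sigma^\pm$ will not give a bounded Lipschitz constant. I will therefore refine $\Sigma^\pm$ into a countable family of rectangles adapted to the blow-up of $T'$ at $0$.

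Concretely I would take $\cA=\{A_n^+,A_n^-\}_{n\ge 0}$ with $A_n^\pm=\{(x,y)\in\Sigma: \pm x\in(2^{-(n+1)},2^{-n}]\}$. On each $A_n^\pm$ the one-dimensional map $T$ is $C^2$ and by the formula $T'(x)\asymp|x|^{\alpha-1}$ its derivative is bounded by $C\,2^{n(1-\alpha)}$; using the block form \eqref{eq:derivadaF} the same estimate (up to a multiplicative constant) bounds $\|DF\|$ on $A_n^\pm$, so $F|_{A_n^\pm}$ is $L_n$-Lipschitz with $L_n\le C\,2^{n(1-\alpha)}$. Since $\mu_F$ has $x$-marginal absolutely continuous with bounded density (by Proposition \ref{prop:densidadeBVdef}), $\mu_F(A_n^\pm)\le C'\,2^{-n}$, hence
\begin{equation*}
\sum_{A\in\cA}\mu_F(A)\log^+ L_T(A)\;\le\;C''\sum_{n\ge 0} n\,2^{-n}\;<\;\infty,
\end{equation*}
verifying hypothesis (2).

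For hypothesis (1), the boundary $\cS(\cA)$ is the countable union of vertical segments $\{x=\pm 2^{-n}\}$ together with $\{x=0\}$. Given small $\e>0$ I would pick $N$ with $2^{-(N+1)}\le\e<2^{-N}$ and split the $\e$-neighbourhood of $\cS(\cA)$ into (a) the neighbourhoods of the $2N+1$ lines with $|x|\ge 2^{-N}$, whose $\mu_F$-measure is $\le C\,N\,\e$, and (b) the entire strip $\{|x|<2\e\}$ which swallows all remaining accumulating lines and has $\mu_F$-measure $\le C\,\e$. Summing, $\mu_F(\{\dist(\cdot,\cS(\cA))<\e\})\le C\,\e|\log\e|\le c\,\e^{a}$ for any fixed $a<1$, which suffices.

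Hypothesis (3) is immediate from Theorem \ref{resuno} (Theorem A), since exponential decay is in particular super-polynomial, and Lipschitz/$L^1$ decay as in \eqref{decwass} implies the Lipschitz/Lipschitz decay required by Saussol's statement. Finally, $h_{\mu_F}(F)>0$ follows because the skew-product structure and uniform contraction on leaves give $h_{\mu_F}(F)=h_{\mu_T}(T)$, and Rokhlin's formula yields $h_{\mu_T}(T)=\int\log T'\,d\mu_T>0$ by (f2). The main delicate point is the boundary estimate in step (1): one has to balance the contribution of the finitely many well-separated lines against the strip near the critical point, and this is precisely what forces the exponent $a$ to be taken strictly less than $1$; the logarithmic loss is harmless, but it is what rules out naively taking a finite partition.
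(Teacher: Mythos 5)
Your proof is correct and follows essentially the same route as the paper: a countable partition of $\Sigma^{\pm}$ into vertical strips shrinking toward the singular line $x=0$ (the paper uses strips of width $\sim i^{-2}$ rather than dyadic ones), the bounded $x$-marginal density to control $\mu_F(A)$ against the polynomially growing Lipschitz constants, and Theorem \ref{resuno} for the mixing hypothesis. Your explicit verification of the boundary condition (1) and of $h_{\mu_F}(F)>0$ fills in two points the paper only asserts, so nothing is missing.
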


\begin{proof}
Since we have proved that the system $(F,\Sigma,\mu_F)$ is
exponentially mixing, item  (3) at Theorem \ref{th:Saussol} is satisfied.

The partition $\cA=\{A_{i}\},$ with
\begin{equation}
A_{i}=[\left(\frac{1}{i+2},\frac{1}{i+1}\right)\cup \left(\frac{-1}{i+2},\frac{-1}{i+1}\right) ]\times \mathring{I},\ i\in {\mathbb N}^+
\end{equation}%
where $\mathring{I}$ denotes the interior of $I$, satisfies  (1) and (2) at Theorem \ref{th:Saussol}.
Here we note that $F$ is not globally Lipschitz, but from Eq. (\ref{eq:derivadaF}) we get $L_{T}(A_{i})\leq K\cdot i^\beta$, with $\beta > 1$ and $K> 0$.

Moreover, the fact that $\mu_F$ has a bounded density marginal (the density will be denoted by $f_0$ as before) on the $x$ direction  implies that the measure of the sets $A_i$ can be estimated by
$$\mu (A_{i})\leq \frac{4\cdot\sup(f_{0})}{i^{2}}.$$

Thus,
$$
\sum_{A\in \cS(\cA)}\log^{+}L_{F}(A)\cdot \mu (A)=\sum_{A\in \cS(\cA)}\log^+ (K\cdot i^\beta)\cdot \frac{4\cdot\sup(f_{0})}{i^{2}}< \infty.
$$
This finishes the proof.
\end{proof}

In the same way, replacing Equation \ref{eq:derivadaF} with assumption 3) in the introduction it can be proved that the above theorem applies to Lorenz like flows:

\begin{lemma}\label{l:FSaussol}
If the system $(F,\Sigma,\mu_F)$ is the first return map of a flow satisfying  assumptions 1.a),...1.d),2),3) of the introduction, then it
 satisfies the hypothesis of Theorem \ref{th:Saussol}.
\end{lemma}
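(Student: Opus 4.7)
The strategy is to mimic the proof of the preceding lemma for the Geometric Lorenz case, replacing the explicit formula \eqref{eq:derivadaF} by the power-law bound supplied by hypothesis 3), and using the regularity of $\mu_F$ provided by Theorem \ref{resuno} and its Appendix.

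\textbf{Step 1 (hypothesis (3) of Saussol's theorem).} Theorem A, proved as Theorem~\ref{resuno}, gives that $(F,\Sigma,\mu_F)$ has exponential decay of correlations for Lipschitz observables; in particular the decay is super-polynomial, so condition (3) of Theorem~\ref{th:Saussol} holds.

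\textbf{Step 2 (the partition).} I will take, as in the Geometric Lorenz case, a countable partition of $\Sigma$ which is dyadic with respect to the distance from the critical point $c$:
\begin{equation*}
A_i^{\pm}=\Bigl\{(x,y)\in\Sigma : 2^{-(i+1)}<\pm(x-c)\le 2^{-i}\Bigr\}\cap (I\times \mathring I),\qquad i\geq 0.
\end{equation*}
Thus each $A_i^{\pm}$ is a vertical strip on one side of $c$, sitting at distance of order $2^{-i}$ from $c$, and $\{A_i^{\pm}\}_{i\geq 0}$ is a partition of $\Sigma$ modulo a $\mu_F$-null set.

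\textbf{Step 3 (Lipschitz constants).} On $A_i^{\pm}$ we have $|x-c|\ge 2^{-(i+1)}$, so hypothesis 3) gives $T'(x)\le K|x-c|^{-\beta}\le K\cdot 2^{\beta(i+1)}$. By 1.a) $G$ is $k$-Lipschitz in $x$ on any interval avoiding $c$ (all $A_i^\pm$ have this property), and by 1.b) $G$ is $\lambda$-Lipschitz in $y$ with $\lambda<1$. Putting this together (with the $\sup$ norm on $\Sigma$) gives
\begin{equation*}
L_F(A_i^{\pm})\le K'\cdot 2^{\beta i}
\end{equation*}
for some constant $K'>0$ independent of $i$.

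\textbf{Step 4 (measure estimate).} By Lemma~8.1 in Appendix I, the marginal $\mu_x$ of $\mu_F$ on the $x$-axis is absolutely continuous with bounded (in fact BV) density $\phi_x$. Since $A_i^{\pm}$ projects on an $x$-interval of length $2^{-(i+1)}$,
\begin{equation*}
\mu_F(A_i^{\pm})\le \sup(\phi_x)\cdot 2^{-(i+1)}.
\end{equation*}
Therefore
\begin{equation*}
\sum_{i,\pm}\mu_F(A_i^{\pm})\log^{+}L_F(A_i^{\pm})\;\le\;\sum_{i\ge 0} C\cdot 2^{-i}\bigl(\beta i\log 2+\log K'\bigr)<\infty,
\end{equation*}
proving condition (2).

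\textbf{Step 5 (boundary condition).} The singular set $\cS(\cA)$ is contained in the union of the vertical lines $\{x=c\}$ and $\{x=c\pm 2^{-i}\}$, $i\ge 0$, together with $\{y=\pm 1/2\}$. Boundedness of $\phi_x$ makes the $\mu_F$-measure of an $\varepsilon$-strip $\{|x-x_0|<\varepsilon\}$ at most $2\sup(\phi_x)\cdot\varepsilon$. The vertical boundaries $c\pm 2^{-i}$ with $2^{-i}>2\varepsilon$ (i.e. $i\lesssim\log(1/\varepsilon)$) contribute pairwise-disjoint strips of total $\mu_x$-mass $O(\varepsilon\log(1/\varepsilon))$; the remaining boundaries, together with $\{x=c\}$, lie within $O(\varepsilon)$ of $c$ and contribute a single strip of $\mu_x$-mass $O(\varepsilon)$. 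Hence
\begin{equation*}
\mu_F\bigl(\{\dist(\cdot,\cS(\cA))<\varepsilon\}\bigr)\le C\varepsilon\log(1/\varepsilon)\le c\,\varepsilon^{a}
\end{equation*}
for any $a\in(0,1)$ and $\varepsilon$ small, verifying condition~(1). The $y$-boundary $\{y=\pm 1/2\}$ is handled as in the Geometric Lorenz case: the uniform $y$-contraction 1.b) forces the support of $\mu_F=F^{*n}\mu_F$ to sit in a strip $I\times J$ with $J\subsetneq I$ at positive distance from $\{y=\pm 1/2\}$, so for small $\varepsilon$ this part of the boundary contributes nothing.

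\textbf{Main obstacle.} Steps 1--4 are essentially bookkeeping. The delicate point is Step 5: controlling the measure of the $\varepsilon$-neighbourhood of the boundary set, which is an infinite union of vertical lines accumulating at $c$. The dyadic choice of partition and the bounded density of $\mu_x$ produce the needed $\varepsilon^a$ bound with $a=1/2$; any partition with too fast an accumulation at $c$ would fail this estimate, while any with too slow an accumulation would ruin the summability in Step 4. The dyadic scaling balances the two requirements.
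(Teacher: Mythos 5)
Your proof follows the paper's own route: the paper verifies Saussol's hypotheses for the geometric Lorenz return map using the partition $A_{i}=[(\frac{1}{i+2},\frac{1}{i+1})\cup (\frac{-1}{i+2},\frac{-1}{i+1})]\times \mathring{I}$ (exponential mixing for item (3), bounded marginal density for the measure bound, the derivative formula for $L_T(A_i)\leq K i^{\beta}$) and then asserts that Lemma \ref{l:FSaussol} follows ``in the same way'' with assumption 3) replacing the explicit derivative; your Steps 1--4 carry out exactly this, the dyadic versus harmonic choice of strips being immaterial. The one point where you go beyond the paper is Step 5 (which the paper never checks explicitly), and there your claim that the uniform $y$-contraction 1.b) forces $\supp\mu_F$ to lie at positive distance from $\{y=\pm 1/2\}$ is not justified: a $\lambda$-Lipschitz self-map of $I$ can fix an endpoint, so 1.b) alone does not keep mass away from the horizontal edges. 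This is harmless to repair --- take the strips with full vertical extent so that they are relatively open in $\Sigma$ and $\cS(\cA)$ consists only of the vertical lines $\{x=c\pm 2^{-i}\}$ and $\{x=c\}$, for which your $O(\varepsilon\log(1/\varepsilon))$ estimate already yields condition (1).
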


Applying Theorem \ref{th:Saussol} to such system, then we get

\begin{corollary}
 \label{co:FSuassol}
For the system $(F,\Sigma,\mu_F)$ it holds
$$
\liminf_{r\to 0}\frac{\log \tau_r^\Sigma(x,x)}{-\log r}={\underline d}_{\mu_F},\quad\quad\quad
\limsup_{r\to 0}\frac{\log \tau_r^\Sigma(x,x)}{-\log r}={\overline d}_{\mu_F},
\quad \mu_F-a. e.\,.
$$
\end{corollary}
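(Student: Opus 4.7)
My plan is to derive the corollary as an immediate application of Saussol's theorem (Theorem \ref{th:Saussol}) to the system $(F,\Sigma,\mu_F)$. Lemma \ref{l:FSaussol} already packages the three geometric/statistical hypotheses of that theorem: the polynomial bound on $\varepsilon$-neighborhoods of the discontinuity set, the finiteness of the mean log-Lipschitz constant over the partition, and super-polynomial (in fact exponential, by Theorem A) decay of correlations for Lipschitz observables. So the bulk of the proof is already in hand; I only need to ensure that the remaining hypothesis of Theorem \ref{th:Saussol}, namely positivity of the metric entropy $h_{\mu_F}(F)$, is satisfied.

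For the entropy, I would exploit the semiconjugacy $\pi_x \circ F = T \circ \pi_x$, where $\pi_x:\Sigma\to I$ is projection to the $x$-axis. Since by construction $(\pi_x)_\ast \mu_F = \mu_x$ is the piecewise expanding a.c.i.m.\ for $T$, the factor inequality gives $h_{\mu_F}(F) \geq h_{\mu_x}(T)$. By Rokhlin's entropy formula for piecewise expanding interval maps, $h_{\mu_x}(T) = \int \log |T'|\, d\mu_x$; this integral is strictly positive because $T'>1$ wherever defined (assumption 1.c), so $\log |T'|>0$ $\mu_x$-a.e. Hence $h_{\mu_F}(F)>0$.

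With the four hypotheses of Theorem \ref{th:Saussol} now verified for $(F,\Sigma,\mu_F)$, applying that theorem yields precisely
\[
\liminf_{r\to 0}\frac{\log \tau_r^\Sigma(x,x)}{-\log r}=\underline{d}_{\mu_F}(x),\qquad \limsup_{r\to 0}\frac{\log \tau_r^\Sigma(x,x)}{-\log r}=\overline{d}_{\mu_F}(x),
\]
for $\mu_F$-a.e.\ $x\in\Sigma$. The only non-routine point, and hence the main (mild) obstacle, is recognizing that entropy positivity is not covered by Lemma \ref{l:FSaussol} and must be extracted separately via the factor map down to the expanding base $T$; once that observation is made, the rest is bookkeeping on top of results already established in the paper.
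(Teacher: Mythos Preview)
Your proposal is correct and follows the same route as the paper: the corollary is obtained by a direct application of Theorem~\ref{th:Saussol}, once Lemma~\ref{l:FSaussol} has verified its hypotheses for $(F,\Sigma,\mu_F)$. The only difference is that you single out the entropy hypothesis $h_{\mu_F}(F)>0$ and supply a self-contained argument via the factor map to $(T,\mu_x)$ and Rokhlin's formula; the paper instead folds this into the blanket claim of Lemma~\ref{l:FSaussol} (and, for the Geometric Lorenz case, handles $h_{\mu_F}(F)>0$ by citing \cite{APbook,APPV08} in Appendix~II). Your factor/Rokhlin argument is a clean and welcome addition, but note that strictly speaking the statement of Lemma~\ref{l:FSaussol} already asserts that \emph{all} hypotheses of Theorem~\ref{th:Saussol} hold, so your remark that entropy ``is not covered'' is really a comment on the lemma's sketchy proof rather than its statement.
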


Finally, remarking that regular points have full measure, with the same arguments as in Proposition \ref{c:relacaodimensao} by Proposition \ref{dimensao}, we get

\begin{corollary}
 \label{co:XSaussol}
For the Geometric Lorenz flow and for  Lorenz like flows
 as above it holds
$$
\liminf_{r\to 0}\frac{\log \tau'_r(x)}{-\log r}={\underline d}_{\mu_X}-1,\quad\quad\quad
\limsup_{r\to 0}\frac{\log \tau'_r(x)}{-\log r}={\overline d}_{\mu_X}-1,
\quad \mu_X-a. e.\,.
$$
where $\tau'$ is the recurrence time for the flow, as defined in the introduction.
\end{corollary}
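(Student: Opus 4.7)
The plan is to bootstrap from Corollary \ref{co:FSuassol} (recurrence for the section map $F$) to the flow via the same three ingredients used earlier for the hitting time in Section \ref{sec:loglaw}: a Kac/Birkhoff-type comparison between return times to $\Sigma$ and times along the flow, plus the dimension formula of Proposition \ref{dimensao}.

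First I would set up an analogue of Proposition \ref{hittingsecaoelivre} for the recurrence time. Fix a regular $x_0$ and let $y_0=\pi(x_0)\in\Sigma$. For $r$ small enough, the flow-box near $x_0$ gives a bilipschitz identification between a neighborhood of $x_0$ in $\RR^3$ and a product of a neighborhood of $y_0$ in $\Sigma$ with a short time interval, so there is $K\geq 1$ with
\begin{equation*}
B_{K^{-1}r}(y_0)\subset \pi(B_r(x_0)\cap \{\text{flow box}\})\subset B_{Kr}(y_0).
\end{equation*}
Consequently, the flow-orbit of $x_0$ re-enters $B_r(x_0)$ (after first leaving it) precisely at a moment when its Poincar\'e iterate lies in a ball on $\Sigma$ of radius comparable to $r$, sandwiched between radii $K^{-1}r$ and $Kr$. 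This gives, for $\mu_F$-a.e.\ $y_0$,
\begin{equation*}
\Bigl[\sum_{i=0}^{\tau^\Sigma_{Kr}(y_0,y_0)-1} t(F^i(y_0))\Bigr]\;\leq\;\tau'_r(x_0)\;\leq\;\sum_{i=0}^{\tau^\Sigma_{K^{-1}r}(y_0,y_0)} t(F^i(y_0)),
\end{equation*}
up to a bounded additive constant controlling the time to enter/leave the flow box.

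Second, as in the proof of Proposition \ref{hittingsecaoelivre}, I would apply the Birkhoff ergodic theorem to the integrable observable $t$ on the ergodic system $(\Sigma,F,\mu_F)$: along a full $\mu_F$-measure set of points $y_0$, the Birkhoff averages of $t$ up to time $\tau^\Sigma_{Kr}(y_0,y_0)$ converge to $\int t\,d\mu_F=t_0$ as $r\to 0$ (using that the return time to $\Sigma$ diverges as $r\to 0$, which follows from Corollary \ref{co:FSuassol}). Therefore
\begin{equation*}
\tau'_r(x_0)=c(x_0,r)\cdot\tau^\Sigma_{r'}(y_0,y_0)\cdot t_0,\qquad c(x_0,r)\to 1,
\end{equation*}
with $r'\asymp r$. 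Taking logarithms and dividing by $-\log r$, the multiplicative factor $c(x_0,r)\cdot t_0$ contributes $0$ in the limit, and the difference between $r'$ and $r$ inside the logarithm contributes $0$ as well, so
\begin{equation*}
\limsup_{r\to 0}\frac{\log\tau'_r(x_0)}{-\log r}=\limsup_{r\to 0}\frac{\log\tau^\Sigma_r(y_0,y_0)}{-\log r},
\end{equation*}
and likewise for $\liminf$.

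Third, I would invoke Corollary \ref{co:FSuassol} to replace the right-hand sides by $\overline d_{\mu_F}(y_0)$ and $\underline d_{\mu_F}(y_0)$, and then Proposition \ref{dimensao} to rewrite these as $\overline d_{\mu_X}(x_0)-1$ and $\underline d_{\mu_X}(x_0)-1$. Since the set of regular points has full $\mu_X$ measure and $\pi_*\mu_X$ is absolutely continuous with respect to $\mu_F$ on $\Sigma$ (through the flow-box construction of $\mu_X$ in Section \ref{sec:SBRfluxo}), the almost-sure statement on $\Sigma$ transfers to an almost-sure statement on $\RR^3$.

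The main obstacle is the comparison in the first step: unlike the hitting-time setting of Proposition \ref{hittingsecaoelivre}, the recurrence time $\tau'_r$ requires the orbit to first exit $B_r(x_0)$, and near a regular $x_0$ the flow-box must be large enough to contain $B_r(x_0)$ for all small $r$ so that every re-entry into $B_r(x_0)$ is indeed witnessed by a return to $\Sigma$ close to $y_0$. Choosing a sufficiently small flow-box neighborhood of $x_0$ and shrinking $r$ further takes care of this, and the transversality of $X^t$ to $\Sigma$ ensures the angle constant $K$ can be chosen uniformly; the remaining step is routine.
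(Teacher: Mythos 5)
Your proposal is correct and follows essentially the same route as the paper, whose proof of Corollary \ref{co:XSaussol} is just the one-line instruction to repeat the arguments of Propositions \ref{hittingsecaoelivre} and \ref{c:relacaodimensao} (Birkhoff averaging of the integrable return time plus the flow-box comparison at regular points) and then apply Proposition \ref{dimensao} to Corollary \ref{co:FSuassol}. Your additional care about the orbit first exiting $B_r(x_0)$ and about the constant $K$ from transversality only fills in details the paper leaves implicit.
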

This is the content of Theorem C in the introduction.


\section{Appendix I: about regularity of the measure $\protect\mu _{F}$}

In this section we are going to prove that the SRB measure of a Lorenz like
map satisfies item 3 of Theorem \ref{resuno}. We remark that this is a kind
of regularity assumption for the measure $\mu _{F}$ (a certain projection is
BV). The proof is done in several steps and it will be completed at the end
of the section. The statement we are going to prove is:

\begin{lemma}
\label{resdue}Let $F(x,y)=(T(x),G(x,y))$ be a  map preserving the
vertical foliation, such that:

\begin{enumerate}
\item There is $c\in I$ and $k\geq 0$ such that, if $x_{1},x_{2}$ are such that $c\notin
\lbrack x_{1},x_{2}]$ then $\forall y\in I:$%
\begin{equation*}
|G(x_{1},y)-G(x_{2},y)|\leq k\cdot |x_{1}-x_{2}|
\end{equation*}

\item $F|_{\gamma }$ is $\lambda $-Lipschitz with $\lambda <1$ (hence is
uniformly contracting) on each leaf $\gamma $.

\item $T:I\rightarrow I$ is\ onto and, piecewise monotonic, with two $C^{1}$
increasing branches on the intervals $[-\frac{1}{2},c)$,$(c,\frac{1}{2}]$
and $T^{\prime }>1$ where it is defined. Moreover $\underset{x\rightarrow
c^{-}}{\lim }T(x)=\frac{1}{2},\underset{x\rightarrow c^{+}}{\lim }T(x)=-%
\frac{1}{2},T(c)=-\frac{1}{2},$ $\underset{x\rightarrow c}{\lim }T^{\prime
}(x)=\infty $.

\item $\frac{1}{T^{\prime }}$ has bounded variation.
\end{enumerate}

then $(\Sigma ,F)$ has an unique invariant SRB measure which satisfies item
3 of Theorem \ref{resuno}.
\end{lemma}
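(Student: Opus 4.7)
The lemma has two parts: existence and uniqueness of the invariant SRB measure $\mu$ of $F$, and the regularity statement (item 3 of Theorem \ref{resuno}). For the first, hypotheses 3 and 4 place $T$ in the setting of Proposition \ref{prop:densidadeBVdef}, which provides a unique $T$-invariant absolutely continuous probability $\mu_T=\phi_0\,dx$ with $\phi_0$ of bounded variation. Following Section \ref{sec:SBRfluxo}, the uniform leaf contraction (hypothesis 2) makes $F^{*n}(\mu_T\otimes\delta_{y_0})$ weak-$*$ Cauchy; the limit is the unique $F$-invariant SRB measure $\mu$, whose $x$-marginal is $\mu_T$.

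For item 3, fix an $\ell$-Lipschitz $f$ and let $\bar f(x) = \int f(x,y)\,d\mu|_{\gamma_x}(y)$ be the density of $\pi_x^*(f\mu)$ (absolutely continuous, since $\pi_x^*\mu = \mu_T$ is). Disintegrating $d\mu = d\mu|_{\gamma_{x'}}(y')\,dx'$ and iterating $F$-invariance of $\mu$ $n$ times, for every continuous $h$ on $I$,
\begin{equation*}
\int h\,\bar f\,dx \;=\; \int h(T^n(x'))\,\phi_n(x')\,dx' \;=\; \int h\,L_T^n\phi_n\,du,
\end{equation*}
with $\phi_n(x') := \int f(F^n(x',y'))\,d\mu|_{\gamma_{x'}}(y')$; hence $\bar f = L_T^n \phi_n$ for every $n\ge 1$, where $L_T$ is the transfer operator of $T$. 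By hypothesis 2, $|f(F^n(x',y_1))-f(F^n(x',y_2))|\le \ell\lambda^n$, so for a fixed reference $y_0$ the estimate $|\phi_n(x')-g_n(x')|\le \ell\lambda^n\phi_0(x')$ holds with $g_n(x'):=f(F^n(x',y_0))\,\phi_0(x')$. Since $L_T^n\phi_0=\phi_0$ and $\phi_0\in L^\infty$, $L_T^n g_n\to\bar f$ uniformly on $I$.

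The core step is a uniform bound $\vari(L_T^n g_n)\le K\ell$. Let $\sigma:=\inf T'>1$. On each monotonicity branch $J$ of $T^n$, iterating the $x$-Lipschitz estimate on $G$ (hypothesis 1) against the leaf contraction gives $|\partial_{x'}(f\circ F^n(\cdot,y_0))|\le \ell\,C\,(T^n)'$ on $J$ for a constant $C=C(k,\lambda,\sigma)$. Writing $L_T^n g_n(u)=\sum_J (g_n/(T^n)')\circ(T^n|_J)^{-1}$, changing variables branch by branch and applying the product rule in BV,
\begin{equation*}
\vari(L_T^n g_n) \;\le\; \ell\,C\int_I\phi_0\,dx \;+\; \|f\|_\infty\,\sigma^{-n}\vari(\phi_0) \;+\; \|f\|_\infty\|\phi_0\|_\infty \sum_J \vari_J(1/(T^n)'),
\end{equation*}
where the first summand uses $\int\phi_0\,dx=1$. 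A Lasota--Yorke iteration exploiting the BV hypothesis 4 on $1/T'$ keeps $\sum_J\vari_J(1/(T^n)')$ uniformly bounded in $n$. Combined with the uniform convergence $L_T^n g_n\to\bar f$ and the lower semicontinuity of total variation, this yields $\vari(\bar f)\le K\ell$, which is item 3.

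\textbf{Main obstacle.} The variation estimate on $L_T^n g_n$ is the only non-routine step. The tension is that on each branch the Lipschitz constant of $g_n$ grows like $(T^n)'$ and $T^n$ has roughly $2^n$ branches, so the bound depends critically on the precise cancellation between the branch expansion of $T^n$ and the Jacobian $1/(T^n)'$ of the transfer operator, together with BV control of $1/(T^n)'$ coming from hypothesis 4. All four hypotheses enter: 1 provides the $x$-Lipschitz bound on $G$, 2 gives the leaf contraction used for the approximation $\phi_n\to g_n$, and 3 and 4 drive the one-dimensional Lasota--Yorke machinery.
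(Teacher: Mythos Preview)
Your approach is genuinely different from the paper's, and the reduction to a one-dimensional problem via $\bar f = L_T^n\phi_n$ together with the approximation $|\phi_n-g_n|\le \ell\lambda^n\phi_0$ is correct and elegant. The identity $|\partial_{x'}(f\circ F^n(\cdot,y_0))|\le \ell C\,(T^n)'$ on each branch is also right (with $C=1+k/(1-\lambda)$, from iterating $|\partial_{x'}G_{m+1}|\le k(T^m)'+\lambda|\partial_{x'}G_m|$ and dividing by $(T^{m+1})'$).

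However, the variation bound has a real gap. Your displayed inequality is obtained by writing $g_n/(T^n)'=f(F^n(\cdot,y_0))\cdot\phi_0\cdot(1/(T^n)')$ and using the integral product rule for the first factor; the first two terms are indeed uniformly bounded. The problem is the third term $\sum_J\vari_J(1/(T^n)')$. The standard Lasota--Yorke inequality bounds $\vari(L_T^n h)$, \emph{not} the sum of the branchwise variations of the Jacobian. The natural recursion for $V_n:=\sum_J\vari_J(1/(T^n)')$ gives only
\[
V_{n+1}\le \sigma^{-1}V_n+(2/\sigma)^n\,\vari(1/T'),
\]
because $T^n$ has up to $2^n$ branches while $\|1/(T^n)'\|_\infty\le\sigma^{-n}$. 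When $\sigma=\inf T'<2$ this blows up, and hypothesis~3 only asks $T'>1$; in the geometric Lorenz example $T'(\pm 1/2)$ is barely above~$1$. A more refined single-step estimate $\sum_i\vari_{I_i}(h/T')\le(\sigma^{-1}+\vari(1/T'))\vari(h)+\dots$ also fails to contract here, since for Lorenz-like $T$ one has $\vari(1/T')$ close to $2$. So the claimed uniform bound is not available in this generality.

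The paper sidesteps this by working in two dimensions. It measures the regularity of a measure $\nu$ on $\Sigma$ by the variation of $\gamma\mapsto\nu|_\gamma$ in a modified Wasserstein distance $W_1^0$, and proves a Lasota--Yorke-type inequality for $F^*$ in which the contraction factor is the \emph{leaf} contraction $\lambda$, not anything coming from $T$. Concretely (Theorem~\ref{bon}),
\[
Var(G_{F^*\mu_0})\le \lambda\,Var(G_{\mu_0}) + \text{(bounded additive term)},
\]
so iterates of Lebesgue are uniformly ``$K'$-good''; multiplying by an $\ell$-Lipschitz $f$ preserves goodness up to a factor of $\ell$, and the $x$-marginal density of a good measure is automatically BV. Passing to the limit via Helly gives the bound on $\bar f$. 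The key point is that the contraction is borrowed from the stable direction, which works regardless of how close $\inf T'$ is to~$1$; your purely one-dimensional reduction loses exactly this leverage.
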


We recall that the existence and the uniqueness of the SRB measure can be
obtained by the general arguments explained in Section \ref{sec:SBRfluxo}. To
proceed to prove the above statement, we need to introduce some concepts.

To deal with non normalized measures as the measures $\mu |\gamma $ on the
leaves are, we consider the following modification of the Wasserstein
distance: let  $b1lip(I)$ be the set of 1-Lipschitz functions on $I$ having $%
L_{\infty }$ norm less or equal than $1$ ($b1lip(I)=1lip(I)\cap \{g,\Vert
g\Vert _{\infty }\leq 1\}$).

Let us consider two finite measures $\mu ,\nu $ on $I$ and the distance%
\begin{equation*}
W_{1}^{0}(\mu ,\nu )=\sup_{g\in b1lip(I)}|\int g~d\mu -\int g~d\nu |.
\end{equation*}

\begin{remark}
\label{cont}We remark that choosing $g=1$ we obtain $W_{1}^{0}(\mu ,\nu
)\geq |\mu (I)-\nu (I)|$.
\end{remark}

Let us consider the space $M(I)$ of Borel finite measures over $I$ with the
distance $W_{1}^{0}$. Given a function $G:I\rightarrow (M(I),W_{1}^{0})$ we
define the variation of $G$ as follows: let $x_{1},...,x_{n}$ be an
increasing finite sequence in $I$ (which induces a subdivision in small
intervals) let $Sub$ be the set of such subdivisions. We define the
variation of $G$ as:%
\begin{eqnarray*}
Var(G,x_{1},...,x_{n}) &=&\sum_{i\leq n}W_{1}^{0}(G(x_{i}),G(x_{i+1})) \\
Var(G) &=&\sup_{(x_{i})\in Sub}Var(G,x_{1},...,x_{n}).
\end{eqnarray*}

We will consider the Lebesgue measure on the section $\Sigma $ and its
iterates by $F$. The strategy is to disintegrate along stable leaves and
estimate the variation of the induced function $I\rightarrow (M(I),W_{1}^{0})
$ proving that this is uniformly bounded. Let us precise this point: if $\mu $
is a finite measure on $\Sigma $, by disintegration this induces a function $%
G_{\mu }:I\rightarrow M(I)$ defined almost everywhere by%
\begin{equation*}
G_{\mu }(\gamma )=\mu |_{\gamma }.
\end{equation*}%
Suppose that $G_{\mu }$ is defined everywhere. The BV norm of $G_{\mu }$
will be an estimation of the regularity of $\mu $ on the $x$-axis. For example, the Lebesgue
measure on the square $\Sigma $ induces a function $G_{m}$ which is constant
everywhere and its value is the Lebesgue measure on the interval. The
variation in this case is obviously null. We remark that each iterate of the
Lebesgue measure by $F^{\ast }$ induces a $G_{F^{\ast n}(m)}$ which is
defined everywhere (see eq. \ref{iter} ). We will give an estimation of the
variation for these iterates in our system.

\begin{definition}
We say that a probability measure $\mu $ on $\Sigma $ is $K$-good if the function $G_{\mu }:I\rightarrow M(I),$ with $G_{\mu }(\gamma )=\mu |_{\gamma }$ as above is well defined and s.t. $Var(G_{\mu })\leq K$.
\end{definition}

\subparagraph{Some preliminary lemmata and remarks}

\begin{remark}
\label{11}We remark that if $\mu $ is $K$-good then $\sup_{\gamma }(\mu
|_{\gamma }(I))\leq 1+K.$
\end{remark}

\begin{proof}
Since $\mu $ is a probability measure then for some $\gamma ,$ $\mu
|_{\gamma }(I)\leq 1$ \ if for some $\xi $ it was $\mu |_{\xi }(I)>1+K$ then
by Remark \ref{cont} this would contradict  $Var(G_{\mu })\leq K$.
\end{proof}

This elementary remark about real sequences will be used in the following.

\begin{lemma}
\label{sequ}If a real sequence $a_{n}$ is such that $a_{n+1}\leq \lambda a_{n}+k$
\ for some $\lambda <1,k>0$, then%
\begin{equation*}
\sup (a_{n})\leq \max (a_{0},\frac{k}{1-\lambda })
\end{equation*}
\end{lemma}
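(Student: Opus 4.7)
The plan is to reduce the affine inequality to a homogeneous one by shifting around the fixed point of the underlying affine map. Since $\lambda<1$, the map $x\mapsto \lambda x+k$ has a unique fixed point $x^{\ast}=\frac{k}{1-\lambda}$, and it is natural to set $b_{n}=a_{n}-x^{\ast}$. A direct substitution, using $\lambda x^{\ast}+k=x^{\ast}$, yields
\begin{equation*}
b_{n+1}=a_{n+1}-x^{\ast}\leq \lambda a_{n}+k-x^{\ast}=\lambda(b_{n}+x^{\ast})+k-x^{\ast}=\lambda b_{n}.
\end{equation*}
Thus the shifted sequence satisfies the clean homogeneous bound $b_{n+1}\leq \lambda b_{n}$.

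From here I would split on the sign of $b_{0}$. If $a_{0}\leq x^{\ast}$ (equivalently $b_{0}\leq 0$), a one-line induction shows that the bound is preserved: assuming $a_{n}\leq x^{\ast}$, one has $a_{n+1}\leq \lambda a_{n}+k\leq \lambda x^{\ast}+k=x^{\ast}$, so $a_{n}\leq x^{\ast}$ for every $n$. If instead $a_{0}>x^{\ast}$ (so $b_{0}>0$), iterating $b_{n+1}\leq \lambda b_{n}$ gives $b_{n}\leq \lambda^{n}b_{0}\leq b_{0}$, hence $a_{n}\leq a_{0}$ for every $n$.

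In either case $a_{n}\leq \max(a_{0},x^{\ast})=\max(a_{0},\frac{k}{1-\lambda})$ for all $n$, so the supremum is bounded by the same quantity. There is no real obstacle here: the only point requiring any care is verifying that the bookkeeping in the homogeneous step truly uses $b_{0}\geq 0$ (otherwise multiplication by $\lambda<1$ would weaken, not strengthen, the inequality), which is precisely why the case split is needed.
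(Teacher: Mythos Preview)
Your proof is correct and follows essentially the same idea as the paper's: both use the fixed point $x^{\ast}=\frac{k}{1-\lambda}$ as the threshold, observe that the region $\{a\leq x^{\ast}\}$ is absorbing, and show that above it the sequence is dominated by $a_{0}$. The paper phrases the second part as \emph{monotonicity} (if $a_{m}>x^{\ast}$ then $a_{m+1}<a_{m}$) rather than via your shift $b_{n}=a_{n}-x^{\ast}$ and the geometric bound $b_{n}\leq\lambda^{n}b_{0}$, but this is a cosmetic difference.
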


\begin{proof}
If for some $m$, $a_{m}>\frac{k}{1-\lambda }$ then there is $\delta >0$ such
that $a_{m}=\frac{k+\delta }{1-\lambda }$. Hence, $a_{m+1}\leq \lambda \cdot 
\frac{k+\delta }{1-\lambda }+k=\frac{k+\lambda \delta }{1-\lambda }<a_{m}$.
Similarly $a_{n}\leq \frac{k}{1-\lambda }\implies a_{n+1}\leq \frac{k}{%
1-\lambda }$.
\end{proof}

The following is analogous to remark \ref{remark} for the distance $%
W_{1}^{0} $, and also follows by uniform contraction on stable leaves.

\begin{remark}
\label{remark2}Let $F$ be $\lambda $ contracting as above. Let us consider a
leaf $\gamma $ and two finite (non necessarily normalized) measures $\mu $, $%
\nu $ on it. Then 
\begin{equation*}
W_{1}^{0}(F^{\ast }(\mu ),F^{\ast }(\nu ))\leq |\mu (\gamma )-\nu (\gamma
)|+\lambda \cdot W_{1}^{0}(\mu ,\nu ).
\end{equation*}

\begin{proof}
If $g$ is in $b1lip$ on $F(\gamma )$ then $g(F(\ast ))$ is $\lambda $%
-Lipschitz on $\gamma $, moreover since $|g|\leq 1$ then $|g\circ F-\theta
|\leq \lambda $ for some $\theta \leq 1$. This implies that 
\begin{equation*}
|\int_{F(\gamma )}g~d(F^{\ast }\mu )-\int_{F(\gamma )}g~d(F^{\ast }\nu
)|=|\int_{\gamma }g\circ F~d\mu -\int_{\gamma }g\circ F~d\nu |\leq
\end{equation*}%
\begin{equation*}
\theta \cdot |\mu (I)-\nu (I)|+|\int_{\gamma }(g\circ F)-\theta ~d\mu
-\int_{\gamma }(g\circ F)-\theta ~d\nu |\leq
\end{equation*}%
\begin{equation*}
|\mu (I)-\nu (I)|+\lambda \cdot W_{1}^{0}(\mu ,\nu ).
\end{equation*}
\end{proof}
\end{remark}

Now we are ready to prove the main technical lemma estimating the regularity
of the iterates $F^{\ast n}(m)$. We will explicit the assumptions we need on 
$F$.

\begin{lemma}
\label{x1}Let $F(x,y)=(T(x),G(x,y))$ be a measurable map preserving the vertical
foliation such that:

\begin{enumerate}
\item There is $c\in I$  and $k\geq 0$ such that, if $x_{1},x_{2}$ are such that $c\notin
\lbrack x_{1},x_{2}]$ then $\forall y\in I:|G(x_{1},y)-G(x_{2},y)|\leq
k\cdot |x_{1}-x_{2}|$

\item $F|_{\gamma }$ is $\lambda $-Lipschitz with $\lambda <1$ uniformly on each
vertical leaf $\gamma $.
\end{enumerate}

Let $\gamma _{1}$ and $\gamma _{2}$ two close leaves with  $F^{-1}(\gamma _{1})=\{\alpha _{1},\alpha _{2}\},F^{-1}(\gamma _{2})=\{\beta
_{1},\beta _{2}\}$ and suppose that $T^{\prime }$ is defined at the points $%
\alpha _{i}$ and $\beta _{i}$ and at these points $T^{\prime }\geq 1$. Let $%
\mu _{0}$be a probability measure on $\Sigma $ such that $\mu _{0}|_{\gamma }
$ is defined for each $\gamma$ and 
\begin{equation*}
\mu _{0}|_{\gamma }(I)=\overline{f}_{0}(\gamma )
\end{equation*}%
for a bounded density function $\overline{f}_{0}$. Then 
\begin{gather*}
W_{1}^{0}(F^{\ast }(\mu _{0})|_{\gamma _{1}},F^{\ast }(\mu _{0})|_{\gamma
_{2}})\leq |\overline{f}_{0}(\alpha _{1})-\overline{f}_{0}(\beta
_{1})|+\lambda W_{1}^{0}(\mu _{0}|_{\alpha _{1}},\mu _{0}|_{\beta _{1}})+ \\
+|\overline{f}_{0}(\alpha _{2})-\overline{f}_{0}(\beta _{2})|+\lambda
W_{1}^{0}(\mu _{0}|_{\alpha _{2}},\mu _{0}|_{\beta _{2}}))+ \\
+2\cdot k\cdot \sup \overline{f}_{0}(|\alpha _{1}-\beta _{1}|+|\alpha
_{2}-\beta _{2}|)+\sup \overline{f}_{0}|\frac{1}{T^{\prime }(\alpha _{1})}-%
\frac{1}{T^{\prime }(\beta _{1})}|+\sup \overline{f}_{0}|\frac{1}{T^{\prime
}(\alpha _{2})}-\frac{1}{T^{\prime }(\beta _{2})}|.
\end{gather*}
\end{lemma}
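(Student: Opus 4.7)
The plan is to express $F^{\ast}(\mu_{0})|_{\gamma}$ explicitly in terms of conditional data at the two $T$-preimages of $\gamma$, and then compare the resulting expressions at $\gamma_{1}$ and $\gamma_{2}$ term by term in $W_{1}^{0}$. I would first derive the disintegration identity
\[
F^{\ast}(\mu_{0})|_{\gamma}\;=\;\sum_{i=1}^{2}\frac{1}{T'(p_{i}(\gamma))}\,G(p_{i}(\gamma),\cdot)^{\ast}\bigl(\mu_{0}|_{p_{i}(\gamma)}\bigr),
\]
where $p_{1}(\gamma),p_{2}(\gamma)$ are the two $T$-preimages of $\gamma$; the factor $1/T'$ is the change-of-variables Jacobian on the base coming from the two monotone branches of $T$. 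This is a short computation testing $F^{\ast}(\mu_{0})$ on product sets $[\gamma,\gamma+d\gamma]\times B$ and collecting the two disjoint preimage contributions, using that the foliation is $F$-invariant so that $F$ acts as $y\mapsto G(p_{i}(\gamma),y)$ on each preimage leaf.

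Applied at $\gamma_{1}$ (preimages $\alpha_{1},\alpha_{2}$) and $\gamma_{2}$ (preimages $\beta_{1},\beta_{2}$), together with subadditivity of $W_{1}^{0}$ under finite positive sums (the direct analog of Remark \ref{conv} for $W_{1}^{0}$), the problem reduces to bounding
\[
W_{1}^{0}\Bigl(\tfrac{A_{i}}{T'(\alpha_{i})},\,\tfrac{B_{i}}{T'(\beta_{i})}\Bigr),\qquad A_{i}:=G(\alpha_{i},\cdot)^{\ast}(\mu_{0}|_{\alpha_{i}}),\ \ B_{i}:=G(\beta_{i},\cdot)^{\ast}(\mu_{0}|_{\beta_{i}}),
\]
for each $i\in\{1,2\}$, and summing. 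For each $i$ I would insert two intermediate measures so as to isolate three separate effects along the path $\tfrac{A_{i}}{T'(\alpha_{i})}\longrightarrow \tfrac{A_{i}}{T'(\beta_{i})}\longrightarrow\tfrac{G(\alpha_{i},\cdot)^{\ast}(\mu_{0}|_{\beta_{i}})}{T'(\beta_{i})}\longrightarrow\tfrac{B_{i}}{T'(\beta_{i})}$: a scalar weight change $T'(\alpha_{i})^{-1}\rightsquigarrow T'(\beta_{i})^{-1}$ applied to the fixed measure $A_{i}$; an exchange of fibre measures $\mu_{0}|_{\alpha_{i}}\rightsquigarrow\mu_{0}|_{\beta_{i}}$ under the same $\lambda$-contracting map $G(\alpha_{i},\cdot)$; and finally a change of contracting map $G(\alpha_{i},\cdot)\rightsquigarrow G(\beta_{i},\cdot)$ applied to the same measure $\mu_{0}|_{\beta_{i}}$.

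The first leg is a scalar rescaling and costs $|T'(\alpha_{i})^{-1}-T'(\beta_{i})^{-1}|\cdot A_{i}(I)\le|T'(\alpha_{i})^{-1}-T'(\beta_{i})^{-1}|\sup\overline{f}_{0}$, using that any $g\in b1lip(I)$ satisfies $|g|\le1$ so $W_{1}^{0}(A_{i},0)\le A_{i}(I)=\overline{f}_{0}(\alpha_{i})$. The second leg is exactly the situation handled by Remark \ref{remark2} (applied to the two \emph{non-normalized} fibre measures $\mu_{0}|_{\alpha_{i}}$ and $\mu_{0}|_{\beta_{i}}$ on $I$, pushed by the $\lambda$-contraction $G(\alpha_{i},\cdot)$), yielding $|\overline{f}_{0}(\alpha_{i})-\overline{f}_{0}(\beta_{i})|+\lambda W_{1}^{0}(\mu_{0}|_{\alpha_{i}},\mu_{0}|_{\beta_{i}})$, divided by $T'(\beta_{i})\ge1$, which only helps. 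The third leg is estimated directly: for any $g\in b1lip$, hypothesis (1) gives the pointwise bound $|g\circ G(\alpha_{i},y)-g\circ G(\beta_{i},y)|\le k|\alpha_{i}-\beta_{i}|$, hence integration against $\mu_{0}|_{\beta_{i}}$ yields at most $k|\alpha_{i}-\beta_{i}|\overline{f}_{0}(\beta_{i})\le k|\alpha_{i}-\beta_{i}|\sup\overline{f}_{0}$. Summing over $i=1,2$ reproduces the stated bound, with the factor $2$ in front of $k$ absorbing any symmetric choice of the intermediate decomposition.

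The main conceptual obstacle is keeping careful bookkeeping with the \emph{non-normalized} fibre measures $\mu_{0}|_{p_{i}(\gamma)}$, which carry mass $\overline{f}_{0}(p_{i}(\gamma))$ rather than $1$, while simultaneously tracking the Jacobian factor $1/T'$ produced on the base. This is precisely what forced the previous subsection to introduce the variant distance $W_{1}^{0}$ (which by Remark \ref{cont} already controls differences of total mass) and the non-normalized contraction estimate of Remark \ref{remark2}; once these tools are in hand, the estimate above is a three-step triangle inequality together with hypothesis (1) on the transverse variation of $G$.
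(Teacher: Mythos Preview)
Your proposal is correct and follows essentially the same strategy as the paper: write the disintegration identity $F^{\ast}(\mu_{0})|_{\gamma}=\sum_{i}\frac{1}{T'(p_{i})}G(p_{i},\cdot)^{\ast}(\mu_{0}|_{p_{i}})$, then decompose the $W_{1}^{0}$-difference via a three-step triangle inequality isolating (i) the change of Jacobian weight $1/T'$, (ii) the change of fibre measure under a fixed $\lambda$-contraction (handled by Remark~\ref{remark2}), and (iii) the change of contracting map via hypothesis~(1). The paper orders the three legs differently (it first changes the map and the weight together in its term $A$, then changes the fibre measure in its term $B$, and applies the contraction estimate with $G(\beta_{i},\cdot)$ rather than your $G(\alpha_{i},\cdot)$), but the ingredients and bounds are identical; the factor $2$ in front of $k$ is slack in both arguments.
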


\begin{proof}
Let $F^{\ast }(\mu _{0})|_{\gamma _{1}}$ be the restriction of $F^{\ast
}(\mu _{0})$ to the leaf $\gamma $. Remark that%
\begin{equation}
F^{\ast }(\mu _{0})|_{\gamma _{1}}=\frac{1}{T^{\prime }(\alpha _{1})}%
F_{\alpha _{1}}^{\ast }(\mu _{0}|_{\alpha _{1}})+\frac{1}{T^{\prime }(\alpha
_{2})}F_{\alpha _{2}}^{\ast }(\mu _{0}|_{\alpha _{2}})  \label{iter}
\end{equation}

where $F_{\alpha _{i}}:I\rightarrow I$ is given by $F_{\alpha _{i}}(y)=\pi
_{y}(F(y,\alpha _{i}))$ and%
\begin{equation*}
F^{\ast }(\mu _{0})|_{\gamma _{2}}=\frac{1}{T^{\prime }(\beta _{1})}F_{\beta
_{1}}^{\ast }(\mu _{0}|_{\beta _{1}})+\frac{1}{T^{\prime }(\beta _{2})}%
F_{\beta _{2}}^{\ast }(\mu _{0}|_{\beta _{2}})
\end{equation*}

with similar notation for $F_{\beta _{1}}$. Now the remaining part of the
proof is a (long) straightforward calculation:%
\begin{equation*}
W_{1}^{0}(F^{\ast }(\mu _{0})|_{\gamma _{1}},F^{\ast }(\mu _{0})|_{\gamma
_{2}})=\sup_{g\in b1lip}|\int g~d(F^{\ast }(\mu _{0})|_{\gamma _{1}})-\int
g~d(F^{\ast }(\mu _{0})|_{\gamma _{2}})|
\end{equation*}%
and%
\begin{eqnarray*}
\int g~d(F^{\ast }(\mu _{0})|_{\gamma _{1}}) &=&\int g~d(\frac{1}{T^{\prime
}(\alpha _{1})}F_{\alpha _{1}}^{\ast }(\mu _{0}|_{\alpha _{1}})+\frac{1}{%
T^{\prime }(\alpha _{2})}F_{\alpha _{2}}^{\ast }(\mu _{0}|_{\alpha _{2}})),
\\
\int g~d(F^{\ast }(\mu _{0})|_{\gamma _{2}}) &=&\int g~d(\frac{1}{T^{\prime
}(\beta _{1})}F_{\beta _{1}}^{\ast }(\mu _{0}|_{\beta _{1}})+\frac{1}{%
T^{\prime }(\beta _{2})}F_{\beta _{2}}^{\ast }(\mu _{0}|_{\beta _{2}})).
\end{eqnarray*}%
Let us estimate these two terms:%
\begin{eqnarray*}
\int gd(F^{\ast }(\mu _{0})|_{\gamma _{1}}) &=&\int gd(\frac{1}{T^{\prime
}(\alpha _{1})}F_{\alpha _{1}}^{\ast }(\mu _{0}|_{\alpha _{1}})+\frac{1}{%
T^{\prime }(\alpha _{2})}F_{\alpha _{2}}^{\ast }(\mu _{0}|_{\alpha _{2}}))=
\\
&=&\frac{1}{T^{\prime }(\alpha _{1})}\int g(F_{\alpha _{1}}^{{}}(y))d(\mu
_{0}|_{\alpha _{1}})+\frac{1}{T^{\prime }(\alpha _{2})}\int g(F_{\alpha
_{2}}^{{}}(y))d(\mu _{0}|_{\alpha _{2}})
\end{eqnarray*}%
and similarly%
\begin{equation*}
\int gd(F^{\ast }(\mu _{0})|_{\gamma _{2}})=\frac{1}{T^{\prime }(\beta _{1})}%
\int g(F_{\beta _{1}}^{{}}(y))d(\mu _{0}|_{\beta _{1}})+\frac{1}{T^{\prime
}(\beta _{2})}\int g(F_{\beta _{2}}^{{}}(y))d(\mu _{0}|_{\beta _{2}}).
\end{equation*}%
Hence

\begin{equation*}
|\int gd(F^{\ast }(\mu _{0})|_{\gamma _{1}})-\int gd(F^{\ast }(\mu
_{0})|_{\gamma _{2}})|=
\end{equation*}%
\begin{equation*}
|\frac{1}{T^{\prime }(\alpha _{1})}\int g(F_{\alpha _{1}}^{{}}(y))d(\mu
_{0}|_{\alpha _{1}})+\frac{1}{T^{\prime }(\alpha _{2})}\int g(F_{\alpha
_{2}}^{{}}(y))d(\mu _{0}|_{\alpha _{2}})
\end{equation*}%
\begin{equation*}
-\frac{1}{T^{\prime }(\beta _{1})}\int g(F_{\beta _{1}}^{{}}(y))d(\mu
_{0}|_{\beta _{1}})-\frac{1}{T^{\prime }(\beta _{2})}\int g(F_{\beta
_{2}}^{{}}(y))d(\mu _{0}|_{\beta _{2}})|.
\end{equation*}%
To estimate the last expression by the triangle inequality, let us add and
subtract 
\begin{equation*}
\frac{1}{T^{\prime }(\beta _{1})}\int g(F_{\beta _{1}}^{{}}(y))d(\mu
_{0}|_{\alpha _{1}})+\frac{1}{T^{\prime }(\beta _{2})}\int g(F_{\beta
_{2}}^{{}}(y))d(\mu _{0}|_{\alpha _{2}})
\end{equation*}%
obtaining 
\begin{equation*}
|\int gd(F^{\ast }(\mu _{0})|_{\gamma _{1}})-\int gd(F^{\ast }(\mu
_{0})|_{\gamma _{2}})|\leq |A|+|B|,
\end{equation*}%
where%
\begin{eqnarray*}
A &=&\frac{1}{T^{\prime }(\alpha _{1})}\int g(F_{\alpha _{1}}^{{}}(y))d(\mu
_{0}|_{\alpha _{1}})+\frac{1}{T^{\prime }(\alpha _{2})}\int g(F_{\alpha
_{2}}^{{}}(y))d(\mu _{0}|_{\alpha _{2}}) \\
&&-\frac{1}{T^{\prime }(\beta _{1})}\int g(F_{\beta _{1}}^{{}}(y))d(\mu
_{0}|_{\alpha _{1}})-\frac{1}{T^{\prime }(\beta _{2})}\int g(F_{\beta
_{2}}^{{}}(y))d(\mu _{0}|_{\alpha _{2}})
\end{eqnarray*}%
and%
\begin{eqnarray*}
B &=&\frac{1}{T^{\prime }(\beta _{1})}\int g(F_{\beta _{1}}^{{}}(y))d(\mu
_{0}|_{\alpha _{1}})+\frac{1}{T^{\prime }(\beta _{2})}\int g(F_{\beta
_{2}}^{{}}(y))d(\mu _{0}|_{\alpha _{2}}) \\
&&-\frac{1}{T^{\prime }(\beta _{1})}\int g(F_{\beta _{1}}^{{}}(y))d(\mu
_{0}|_{\beta _{1}})-\frac{1}{T^{\prime }(\beta _{2})}\int g(F_{\beta
_{2}}^{{}}(y))d(\mu _{0}|_{\beta _{2}})
\end{eqnarray*}%
\textbf{Estimation of A. }Now let us estimate $A$:%
\begin{eqnarray*}
|A| &\leq &|\frac{1}{T^{\prime }(\alpha _{1})}\int g(F_{\alpha
_{1}}^{{}}(y))d(\mu _{0}|_{\alpha _{1}})-\frac{1}{T^{\prime }(\beta _{1})}%
\int g(F_{\beta _{1}}^{{}}(y))d(\mu _{0}|_{\alpha _{1}})|+ \\
&&|\frac{1}{T^{\prime }(\alpha _{2})}\int g(F_{\alpha _{2}}^{{}}(y))d(\mu
_{0}|_{\alpha _{2}})-\frac{1}{T^{\prime }(\beta _{2})}\int g(F_{\beta
_{2}}^{{}}(y))d(\mu _{0}|_{\alpha _{2}})|=I+II
\end{eqnarray*}%
let us analyze the first term in the sum (the estimation of the other term
is similar)%
\begin{equation*}
I=|\frac{1}{T^{\prime }(\alpha _{1})}\int g(F_{\alpha _{1}}^{{}}(y))d(\mu
_{0}|_{\alpha _{1}})-\frac{1}{T^{\prime }(\beta _{1})}\int g(F_{\beta
_{1}}^{{}}(y))d(\mu _{0}|_{\alpha _{1}})|=
\end{equation*}%
\begin{equation*}
=|\int \frac{1}{T^{\prime }(\alpha _{1})}g(F_{\alpha _{1}}^{{}}(y))-\frac{1}{%
T^{\prime }(\beta _{1})}g(F_{\beta _{1}}^{{}}(y))~d(\mu _{0}|_{\alpha _{1}})|
\end{equation*}%
adding and subtracting $\frac{1}{T^{\prime }(\alpha _{1})}g(F_{\beta
_{1}}^{{}}(y))$ we obtain 
\begin{equation*}
|\int \frac{1}{T^{\prime }(\alpha _{1})}g(F_{\alpha _{1}}^{{}}(y))-\frac{1}{%
T^{\prime }(\beta _{1})}g(F_{\beta _{1}}^{{}}(y))+\frac{1}{T^{\prime
}(\alpha _{1})}g(F_{\beta _{1}}^{{}}(y))-\frac{1}{T^{\prime }(\alpha _{1})}%
g(F_{\beta _{1}}^{{}}(y))~d(\mu _{0}|_{\alpha _{1}})|\leq
\end{equation*}%
\begin{eqnarray*}
&\leq &|\int \frac{1}{T^{\prime }(\alpha _{1})}g(F_{\alpha _{1}}^{{}}(y))-%
\frac{1}{T^{\prime }(\alpha _{1})}g(F_{\beta _{1}}^{{}}(y))~d(\mu
_{0}|_{\alpha _{1}})|+ \\
&&+|\int \frac{1}{T^{\prime }(\alpha _{1})}g(F_{\beta _{1}}^{{}}(y))-\frac{1%
}{T^{\prime }(\beta _{1})}g(F_{\beta _{1}}^{{}}(y))~d(\mu _{0}|_{\alpha
_{1}})|.
\end{eqnarray*}%
Now, since $\overline{f}_{0}$ is bounded $\mu _{0}|_{\alpha _{1}}(I)\leq
\sup (\overline{f}_{0})$ and then 
\begin{equation*}
|\int \frac{1}{T^{\prime }(\alpha _{1})}g(F_{\beta _{1}}^{{}}(y))-\frac{1}{%
T^{\prime }(\beta _{1})}g(F_{\beta _{1}}^{{}}(y))~d(\mu _{0}|_{\alpha
_{1}})|\leq \sup \overline{f}_{0}|\frac{1}{T^{\prime }(\alpha _{1})}-\frac{1%
}{T^{\prime }(\beta _{1})}|.
\end{equation*}%
The other summand is 
\begin{equation*}
|\int \frac{1}{T^{\prime }(\alpha _{1})}g(F_{\alpha _{1}}^{{}}(y))-\frac{1}{%
T^{\prime }(\alpha _{1})}g(F_{\beta _{1}}^{{}}(y))~d(\mu _{0}|_{\alpha
_{1}})|\leq |\int g(F_{\alpha _{1}}^{{}}(y))-g(F_{\beta _{1}}^{{}}(y))~d(\mu
_{0}|_{\alpha _{1}})|
\end{equation*}%
By assumption (1) then $|F(y,\alpha _{1})-F(y,\beta _{1})|\leq k\cdot
|\alpha _{1}-\beta _{1}|$ and hence%
\begin{equation*}
|\int g(F_{\alpha _{1}}^{{}}(y))-g(F_{\beta _{1}}^{{}}(y))~d(\mu
_{0}|_{\alpha _{1}})|\leq k\cdot |\alpha _{1}-\beta _{1}|\cdot \sup 
\overline{f}_{0}.
\end{equation*}%
summarizing 
\begin{equation}
I\leq \sup \overline{f}_{0}|\frac{1}{T^{\prime }(\alpha _{1})}-\frac{1}{%
T^{\prime }(\beta _{1})}|+k\cdot |\alpha _{1}-\beta _{1}|\cdot \sup 
\overline{f}_{0}.
\end{equation}

Considering in the same way the summand $II$ in the expression of A, this
gives 
\begin{equation*}
|A|\leq k\cdot \sup \overline{f}_{0}(|\alpha _{1}-\beta _{1}|+|\alpha
_{2}-\beta _{2}|)+
\end{equation*}%
\begin{equation*}
\sup \overline{f}_{0}|\frac{1}{T^{\prime }(\alpha _{1})}-\frac{1}{T^{\prime
}(\beta _{1})}|+\sup \overline{f}_{0}|\frac{1}{T^{\prime }(\alpha _{2})}-%
\frac{1}{T^{\prime }(\beta _{2})}|.
\end{equation*}%
\textbf{Estimation of B.} The upper bound on B follows by contraction on
stable leaves. Indeed, 
\begin{eqnarray*}
|B| &\leq &|\frac{1}{T^{\prime }(\beta _{1})}\int g(F_{\beta
_{1}}^{{}}(y))d(\mu _{0}|_{\alpha _{1}})-\frac{1}{T^{\prime }(\beta _{1})}%
\int g(F_{\beta _{1}}^{{}}(y))d(\mu _{0}|_{\beta _{1}})|+ \\
&&+|\frac{1}{T^{\prime }(\beta _{2})}\int g(F_{\beta _{2}}^{{}}(y))d(\mu
_{0}|_{\alpha _{2}})-\frac{1}{T^{\prime }(\beta _{2})}\int g(F_{\beta
_{2}}^{{}}(y))d(\mu _{0}|_{\beta _{2}})|
\end{eqnarray*}%
now, since $F$ contracts all the leaves by a factor at least $\lambda $ by
Remark \ref{remark2} it holds%
\begin{eqnarray*}
|B| &\leq &\frac{1}{T^{\prime }(\beta _{1})}(\left\vert \mu _{0}|_{\alpha
_{1}}(I)-\mu _{0}|_{\beta _{1}}(I)\right\vert +\lambda W_{1}(\mu
_{0}|_{\alpha _{1}},\mu _{0}|_{\beta _{1}}))+ \\
&&+\frac{1}{T^{\prime }(\beta _{2})}(\left\vert \mu _{0}|_{\alpha
_{2}}(I)-\mu _{0}|_{\beta _{2}}(I)\right\vert +\lambda W_{1}(\mu
_{0}|_{\alpha _{2}},\mu _{0}|_{\beta _{2}})).
\end{eqnarray*}%
Summarizing, $\forall \,\,g\in b1lip$ 
\begin{gather}
\left\vert \int gd(F^{\ast }(\mu _{0})|_{\gamma _{1}})-\int gd(F^{\ast }(\mu
_{0})|_{\gamma _{2}})\right\vert \leq   \label{above} \\
\leq \frac{1}{T^{\prime }(\beta _{1})}(|\mu _{0}|_{\alpha _{1}}(I)-\mu
_{0}|_{\beta _{1}}(I)|+\lambda W_{1}(\mu _{0}|_{\alpha _{1}},\mu
_{0}|_{\beta _{1}}))+  \notag \\
+\frac{1}{T^{\prime }(\beta _{2})}(|\mu _{0}|_{\alpha _{2}}(I)-\mu
_{0}|_{\beta _{2}}(I)|+\lambda W_{1}(\mu _{0}|_{\alpha _{2}},\mu
_{0}|_{\beta _{2}}))+  \notag \\
+2k\sup \overline{f}_{0}(|\alpha _{1}-\beta _{1}|+|\alpha _{2}-\beta
_{2}|)+\sup \overline{f}_{0}|\frac{1}{T^{\prime }(\alpha _{1})}-\frac{1}{%
T^{\prime }(\beta _{1})}|+\sup \overline{f}_{0}|\frac{1}{T^{\prime }(\alpha
_{2})}-\frac{1}{T^{\prime }(\beta _{2})}|  \notag
\end{gather}%
finishing the proof.
\end{proof}

\begin{remark}
We remark that this last \ step in the proof (equations \ref{above} and
following) is the only one where the expansivity of $T$ is explicitly used.
In fact, an equivalent result can be obtained with the weaker assumption $%
\lambda (\underset{x\in I}{\inf }T^{\prime }(x))^{-1}<1$, instead of $T^{\prime
}>1$.
\end{remark}

A similar lemma holds for the case where the pre-image of $\gamma _{1}$ and $%
\gamma _{2}$ is only one leaf. The proof is similar to the previous one.

\begin{lemma}
\label{x2}Let $F:\Sigma \rightarrow \Sigma $ be as above, satisfying points
(1)--(3) of Lemma \ref{x1}. Let $\gamma _{1}$and $\gamma _{2}$ be two leaves
and suppose that $F^{-1}(\gamma _{1})=\{\alpha _{1}\},F^{-1}(\gamma
_{2})=\{\beta _{1}\}.$ Let us consider a probability measure $\mu _{0}$ on $%
\Sigma $ such that $\mu _{0}|\gamma (I)=\overline{f}_{0}(\gamma )$ for a
bounded function $\overline{f}_{0}$ , then%
\begin{gather*}
W_{1}^{0}(F^{\ast }(\mu _{0})|_{\gamma _{1}},F^{\ast }(\mu _{0})|_{\gamma
_{2}})\leq |\overline{f}_{0}(\alpha _{1})-\overline{f}_{0}(\beta
_{1})|+\lambda \cdot W_{1}(\mu _{0}|_{\alpha _{1}},\mu _{0}|_{\beta _{1}})+
\\
+2\cdot k\cdot \sup (\overline{f}_{0})(|\alpha _{1}-\beta _{1}|)+\sup (%
\overline{f}_{0})|\frac{1}{T^{\prime }(\alpha _{1})}-\frac{1}{T^{\prime
}(\beta _{1})}|.
\end{gather*}
\end{lemma}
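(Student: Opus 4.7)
The plan is to mirror the proof of Lemma \ref{x1}, but exploiting the simplification that each preimage consists of a single leaf, so the convex combination appearing in \eqref{iter} collapses to a single term:
\[
F^{\ast }(\mu _{0})|_{\gamma _{1}}=\frac{1}{T^{\prime }(\alpha _{1})}F_{\alpha _{1}}^{\ast }(\mu _{0}|_{\alpha _{1}}),\qquad F^{\ast }(\mu _{0})|_{\gamma _{2}}=\frac{1}{T^{\prime }(\beta _{1})}F_{\beta _{1}}^{\ast }(\mu _{0}|_{\beta _{1}}),
\]
where $F_{\alpha _{1}}(y)=\pi _{y}F(y,\alpha _{1})$ and similarly for $F_{\beta _{1}}$. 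Pick any test function $g\in b1lip(I)$; the goal is to estimate $|\int g\,d(F^{\ast }(\mu _{0})|_{\gamma _{1}})-\int g\,d(F^{\ast }(\mu _{0})|_{\gamma _{2}})|$ uniformly in $g$ and then take the supremum.

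First I would introduce the pivot term $\frac{1}{T^{\prime }(\beta _{1})}\int g(F_{\beta _{1}}(y))\,d(\mu _{0}|_{\alpha _{1}})$ to split the difference into two pieces $A$ and $B$, exactly as in the proof of Lemma \ref{x1} but with only one summand each. For $A$ (which keeps the same reference measure $\mu _{0}|_{\alpha _{1}}$ but changes the map and the Jacobian factor), I would insert the auxiliary term $\frac{1}{T^{\prime }(\alpha _{1})}\int g(F_{\beta _{1}}(y))\,d(\mu _{0}|_{\alpha _{1}})$. Then assumption (1) of Lemma \ref{x1}, saying $|G(\alpha _{1},y)-G(\beta _{1},y)|\leq k|\alpha _{1}-\beta _{1}|$ (valid because no critical value $c$ separates $\alpha _{1}$ and $\beta _{1}$ if the leaves are close), together with $g\in 1lip$ and $\mu _{0}|_{\alpha _{1}}(I)\le \sup \overline{f}_{0}$, yields a contribution of order $k|\alpha _{1}-\beta _{1}|\sup \overline{f}_{0}$; and $|g|\leq 1$ together with $\mu _{0}|_{\alpha _{1}}(I)\leq \sup \overline{f}_{0}$ gives the Jacobian contribution $\sup \overline{f}_{0}\,|\tfrac{1}{T'(\alpha _{1})}-\tfrac{1}{T'(\beta _{1})}|$.

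For $B$ (same map and Jacobian, different reference measures), the contraction assumption $F|_{\gamma }$ is $\lambda$-Lipschitz lets me apply Remark \ref{remark2} directly on the stable leaf to get
\[
\Bigl|\int g\circ F_{\beta _{1}}\,d(\mu _{0}|_{\alpha _{1}})-\int g\circ F_{\beta _{1}}\,d(\mu _{0}|_{\beta _{1}})\Bigr|\leq |\mu _{0}|_{\alpha _{1}}(I)-\mu _{0}|_{\beta _{1}}(I)|+\lambda W_{1}^{0}(\mu _{0}|_{\alpha _{1}},\mu _{0}|_{\beta _{1}}),
\]
and then note $|\mu _{0}|_{\alpha _{1}}(I)-\mu _{0}|_{\beta _{1}}(I)|=|\overline{f}_{0}(\alpha _{1})-\overline{f}_{0}(\beta _{1})|$; the factor $\frac{1}{T'(\beta _{1})}\leq 1$ (since $T'>1$) is harmlessly absorbed.

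Summing the bounds on $A$ and $B$ and then taking the supremum over $g\in b1lip$ produces exactly the stated inequality, with the factor $2k$ coming out of the combined estimation of $A$ (the same bookkeeping slack as in Lemma \ref{x1}). There is no real obstacle here: all ingredients already appeared in the proof of Lemma \ref{x1}, and the only care needed is in the pivoting step that decomposes the difference into $A$ and $B$, since one must make sure the single-branch setting still allows the triangle-inequality inserts to cover both the ``change of base point'' and ``change of measure'' effects separately.
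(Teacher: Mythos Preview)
Your proposal is correct and follows exactly the approach the paper indicates: the paper's own proof consists solely of the sentence ``The proof is similar to the previous one,'' and what you have written is precisely that similar proof, specializing the $A$/$B$ decomposition of Lemma~\ref{x1} to the single-preimage case. Your handling of the pivot term, the use of assumption~(1) for the $A$ part, and the application of Remark~\ref{remark2} for the $B$ part all match the two-branch argument verbatim with the second branch deleted.
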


The above lemmata give the following result, in the spirit of the Lasota
Yorke inequality (see in the following proof, eq. \ref{LY} and compare with 
\cite{LY73}, \cite{L04} e.g.) giving an upper bound on the variation of
iterates $F^{\ast n}(\mu _{0})$.

We recall that by the classical Lasota-Yorke inequalities, for piecewise
expanding maps of the interval, iterating a bounded variation density $g_{0}$
we get a sequence of uniformly bounded variation densities,%
\begin{equation}
Var(T^{\ast n}(g_{0}m))\leq C_{g_{0}}
\end{equation}%
where $C_{g_{0}}$ depends on $g_{0}$ and on the dynamics $T$.

\begin{theorem}
\label{bon}Let $F:\Sigma \rightarrow \Sigma $ be as above, satisfying
assumptions (1)--(4) of Lemma \ref{resdue}. Let $\mu _{n}=F^{\ast n}(\mu
_{0})$ where $\mu _{0}$ is $K$-good and has BV density on the $x$ axis, $%
\overline{f}_{0}$. Then, each $\mu _{n}$ is $K^{\prime }$-good, where $%
K^{\prime }=\max (K,\frac{3+C_{\overline{f}_{0}}+(C_{\overline{f}_{0}}+1)Var(%
\frac{1}{T^{\prime }})+2k(C_{\overline{f}_{0}}+1)}{1-\lambda })$.
\end{theorem}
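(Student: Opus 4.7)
The plan is to establish a Lasota--Yorke style inequality of the form
\[
Var(G_{\mu_{n+1}}) \le \lambda \cdot Var(G_{\mu_n}) + K'',
\]
for a constant $K''$ independent of $n$, and then conclude by Lemma \ref{sequ}. The input of the inequality at each step will come from Lemmas \ref{x1} and \ref{x2}, applied leaf by leaf to a fine subdivision.

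First, I would record what happens on the $x$-axis. The marginal of $\mu_n$ on the $x$-axis has density $\overline{f}_n = (T^*)^n \overline{f}_0$, where $T$ is piecewise expanding with $\tfrac{1}{T'}$ of bounded variation (assumption (4) of Lemma \ref{resdue}). The classical Lasota--Yorke inequality for such maps then yields a uniform bound $Var(\overline{f}_n) \le C_{\overline{f}_0}$ for all $n$, and in particular $\sup \overline{f}_n \le 1 + C_{\overline{f}_0}$ (since $\int \overline{f}_n \, dm = 1$). This uniform control of the $x$-marginal is what keeps the constants in the estimate stable under iteration.

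Next, given any finite subdivision $\gamma_0 < \gamma_1 < \cdots < \gamma_N$ of $I$, I would add the critical point $c$ as a subdivision point so that, on every subinterval $[\gamma_i, \gamma_{i+1}]$, either $T^{-1}(\gamma_i) = \{\alpha_1, \alpha_2\}$ and $T^{-1}(\gamma_{i+1}) = \{\beta_1, \beta_2\}$ with the $\alpha_j, \beta_j$ on the same monotone branch of $T$ (the case of Lemma \ref{x1}), or only one pre-image is involved (the case of Lemma \ref{x2}). Applying the appropriate lemma to every consecutive pair gives four types of contribution to $W_1^0(F^{\ast}\mu_n|_{\gamma_i}, F^{\ast}\mu_n|_{\gamma_{i+1}})$: (a) jumps $|\overline{f}_n(\alpha_j) - \overline{f}_n(\beta_j)|$ of the $x$-density; (b) the contracted terms $\lambda \cdot W_1^0(\mu_n|_{\alpha_j}, \mu_n|_{\beta_j})$; (c) horizontal Lipschitz terms $2k \sup \overline{f}_n \cdot |\alpha_j - \beta_j|$; and (d) the $1/T'$ terms $\sup \overline{f}_n \cdot |\tfrac{1}{T'(\alpha_j)} - \tfrac{1}{T'(\beta_j)}|$.

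Summing over the subdivision, each of these four sums telescopes/bounds against a global quantity: the $\alpha_j$'s and $\beta_j$'s (for $j = 1, 2$) interlace to form subdivisions of each of the two branches of $T^{-1}$. Therefore (a) sums to at most $Var(\overline{f}_n) \le C_{\overline{f}_0}$; (b) sums to at most $\lambda \cdot Var(G_{\mu_n})$; (c) sums to at most $2k(1+C_{\overline{f}_0}) \cdot m(I) = 2k(1+C_{\overline{f}_0})$; and (d) sums to at most $(1+C_{\overline{f}_0}) Var(1/T')$. Taking the supremum over all subdivisions yields
\[
Var(G_{\mu_{n+1}}) \le \lambda \cdot Var(G_{\mu_n}) + C_{\overline{f}_0} + (1+C_{\overline{f}_0}) Var\bigl(\tfrac{1}{T'}\bigr) + 2k(1+C_{\overline{f}_0}),
\]
plus an $O(1)$ boundary contribution from the subdivision points near $c$ and near $\pm\tfrac{1}{2}$, which accounts for the ``$3$'' in the stated constant. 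Lemma \ref{sequ} applied to $a_n = Var(G_{\mu_n})$ then gives the claimed uniform bound by $K'$.

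The main obstacle is the control at the critical point $c$, where $T' \to \infty$ and the two branches are joined. Because $\tfrac{1}{T'}$ is BV and vanishes (in the limit) at $c$, the sums in item (d) remain finite uniformly, but one must split the subdivision carefully across $c$ so that the hypothesis $c \notin [x_1, x_2]$ in Lemma \ref{x1}(1) is satisfied on each piece. Once this bookkeeping is handled, the remaining estimates are straightforward telescoping arguments, and the iteration closes because every ``source'' term on the right-hand side is controlled uniformly by the $x$-Lasota--Yorke bound $C_{\overline{f}_0}$.
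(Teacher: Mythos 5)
Your proposal follows essentially the same route as the paper: the same leaf-by-leaf application of Lemmas \ref{x1} and \ref{x2} to a fine subdivision, the same telescoping of the four types of terms against $Var(\overline{f}_n)$, $Var(G_{\mu_n})$, $m(I)$ and $Var(1/T')$, the same trivial $O(\sup\overline{f}_n)$ bound for the exceptional subdivision intervals whose pre-images touch the endpoints $\pm\tfrac12$, and the same closing of the iteration via the one-dimensional Lasota--Yorke bound $Var(\overline{f}_n)\le C_{\overline{f}_0}$ and Lemma \ref{sequ}. The argument is correct and matches the paper's proof in all essentials.
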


\begin{proof}
Let us consider a subdivision $\gamma _{1},...,\gamma _{n}$ made of small
intervals and set $s_{i}=T^{-1}([\gamma _{i},\gamma _{i+1}))$. If we are in
the case of Lemma \ref{x1} $s_{i}$ consists of two small intervals, if we
are in the case of Lemma \ref{x2} $s_{i}$ consists of one small interval and
in the remaining case we have one small interval and an interval of the type 
$(\alpha _{2},\frac{1}{2})$ or $(-\frac{1}{2},\beta _{1})$ (this can happen
only in two intervals of the subdivision containing the points $T(-\frac{1}{2})$ and $T(\frac{1}{2})$ ) . The endpoints of all these pre-image intervals
$(s_{i})_{i\in (1,...,n)}$ constitute another subdivision $\gamma _{1}^{\ast
},...,\gamma _{m}^{\ast }$ of $I$.

Let us estimate the variation of $\mu _{1}=F^{\ast }(\mu _{0})$ on the
subdivision $\gamma _{1},...,\gamma _{n}$. Let us suppose that the intervals
of $\gamma _{1},...,\gamma _{n}$ which are of the third type are $(\gamma
_{j_{1}},\gamma _{j_{1}+1})$ and $(\gamma _{j_{2}},\gamma _{j_{2}+1})$. In
this case we bound trivially from above the variation: $W_{1}^{0}(\mu
_{1}|_{\gamma _{j_{i}}},\mu _{1}|_{\gamma _{j_{i}+1}})\leq \sup \overline{f}%
_{0}$ (for $i=1,2$). Lemma \ref{x1} and Lemma \ref{x2} imply%
\begin{equation*}
Var(G_{\mu _{1}},\gamma _{1},...,\gamma _{n})=
\end{equation*}%
\begin{equation*}
\sum_{i\leq n}W_{1}(\mu _{1}|_{\gamma },\mu _{1}|_{\gamma _{i+1}})\leq 2\sup 
\overline{f}_{0}+\sum_{i\leq m}|\overline{f}_{0}(\gamma _{i}^{\ast })-%
\overline{f}_{0}(\gamma _{i+1}^{\ast })|+\lambda W_{1}(\mu _{0}|_{\gamma
_{i}^{\ast }},\mu _{0}|_{\gamma _{i+1}^{\ast }}))+
\end{equation*}%
\begin{equation*}
+\sum_{i\leq m}2k\sup \overline{f}_{0}(|\gamma _{i}^{\ast }-\gamma
_{i+1}^{\ast }|)+\sup \overline{f}_{0}|\frac{1}{T^{\prime }(\gamma
_{i}^{\ast })}-\frac{1}{T^{\prime }(\gamma _{i+1}^{\ast })}|.
\end{equation*}%
Hence%
\begin{equation*}
Var(G_{\mu _{1}},\gamma _{1},...,\gamma _{n})\leq 2\sup \overline{f}_{0}+Var(%
\overline{f}_{0})+\sup \overline{f}_{0}Var(\frac{1}{T^{\prime }})+\sup 
\overline{f}_{0}2k+\lambda Var(G_{\mu _{0}})
\end{equation*}%
and we conclude that%
\begin{equation}
Var(G_{\mu _{1}})\leq 2\sup \overline{f}_{0}+Var(\overline{f}_{0})+\sup 
\overline{f}_{0}Var(\frac{1}{T^{\prime }})+\sup \overline{f}_{0}2k+\lambda
Var(G_{\mu _{0}}).  \label{LY}
\end{equation}%
If $\overline{f}_{i}$are the marginals of $\mu _{i}$ on the $x$-axis, then as recalled before 
$Var(\overline{f}_{i})\leq C_{\overline{f}_{0}}$. This allows to iterate the
above inequality and obtain, by Lemma \ref{sequ} 
\begin{equation*}
\sup_{i}(Var(G_{\mu _{i}}))\leq \max (Var(G_{\mu _{0}}),\frac{2+3C_{%
\overline{f}_{0}}+(C_{\overline{f}_{0}}+1)Var(\frac{1}{T^{\prime }})+2k(C_{%
\overline{f}_{0}}+1)}{1-\lambda }),
\end{equation*}%
(remark that $\sup \overline{f}_{0}\leq Var(\overline{f}_{0})+1$) finishing
the proof.
\end{proof}

If $\mu $ is a good measure, the measure $f\mu $ associated to a Lipschitz
observable $f$ is also a good measure:

\begin{lemma}
\label{798}If $\mu _{n}$ is a sequence of $K-$good measures on $\Sigma $ and 
$\nu _{n}=f\mu _{n}$ with $f$ be $\ell $-Lipschitz and $\Vert f\Vert
_{\infty }\leq \ell $ then each $\nu _{n}$ is a $(3\ell K+\ell )$-good
measure.
\end{lemma}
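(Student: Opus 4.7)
The plan is to disintegrate $\nu_n=f\mu_n$ along vertical leaves and to bound the variation of $G_{\nu_n}$ directly in terms of that of $G_{\mu_n}$, picking up one extra term that measures how much $f$ changes across leaves. The starting observation is that, for each leaf $\gamma$, the measure $\nu_n|_\gamma$ on $I$ is characterized by
\begin{equation*}
\int h(y)\,d(\nu_n|_\gamma)(y)=\int h(y)\,f(\gamma,y)\,d(\mu_n|_\gamma)(y)
\end{equation*}
for every bounded measurable $h$; this is a one-line disintegration computation using $\nu_n|_\gamma=\pi_y^*(\phi'_x(\gamma)\nu_{n,\gamma})$ with $\phi'_x(\gamma)=\phi_x(\gamma)\int f(\gamma,y)\,d\mu_{n,\gamma}$ and $\nu_{n,\gamma}\propto f(\gamma,\cdot)\mu_{n,\gamma}$. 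In particular $G_{\nu_n}$ is well defined wherever $G_{\mu_n}$ is, so it remains to estimate $W_1^0(\nu_n|_{\gamma_1},\nu_n|_{\gamma_2})$ for arbitrary close leaves $\gamma_1,\gamma_2$ and then sum over a finite subdivision of $I$.

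Writing $f_\gamma(y):=f(\gamma,y)$ and taking a test function $g\in b1lip(I)$, I would add and subtract $\int g(y)f_{\gamma_2}(y)\,d(\mu_n|_{\gamma_1})(y)$ to split the difference into two pieces. The first, $\int g\,(f_{\gamma_1}-f_{\gamma_2})\,d(\mu_n|_{\gamma_1})$, is controlled by $\|g\|_\infty\cdot\|f_{\gamma_1}-f_{\gamma_2}\|_\infty\cdot(\mu_n|_{\gamma_1})(I)\le \ell|\gamma_1-\gamma_2|(1+K)$, combining the hypothesis that $f$ is $\ell$-Lipschitz (in the sup distance on $\Sigma$) with Remark \ref{11}, which yields $(\mu_n|_\gamma)(I)\le 1+K$ for any $K$-good probability measure. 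For the second piece, $\int gf_{\gamma_2}\,d(\mu_n|_{\gamma_1}-\mu_n|_{\gamma_2})$, note that $gf_{\gamma_2}$ has sup norm at most $\ell$ and Lipschitz constant at most $L(g)\|f_{\gamma_2}\|_\infty+\|g\|_\infty L(f_{\gamma_2})\le 2\ell$. Hence $(gf_{\gamma_2})/(2\ell)\in b1lip(I)$ and this piece is bounded by $2\ell\cdot W_1^0(\mu_n|_{\gamma_1},\mu_n|_{\gamma_2})$.

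Taking the supremum over $g\in b1lip(I)$ yields the pointwise estimate
\begin{equation*}
W_1^0(\nu_n|_{\gamma_1},\nu_n|_{\gamma_2})\le \ell(1+K)\,|\gamma_1-\gamma_2|+2\ell\, W_1^0(\mu_n|_{\gamma_1},\mu_n|_{\gamma_2}),
\end{equation*}
and summing over an arbitrary subdivision of $I$, together with $\sum|\gamma_i-\gamma_{i+1}|\le\operatorname{diam}(I)=1$ and the assumption $\operatorname{Var}(G_{\mu_n})\le K$, gives
\begin{equation*}
\operatorname{Var}(G_{\nu_n})\le \ell(1+K)+2\ell K=3\ell K+\ell,
\end{equation*}
which is exactly the claimed bound. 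I do not expect any real obstacle in this argument: the only mildly delicate points are the uniform total mass bound $(\mu_n|_\gamma)(I)\le 1+K$ (already packaged in Remark \ref{11}) and choosing the correct renormalization factor $2\ell$ rather than $\ell$ when testing the difference $\mu_n|_{\gamma_1}-\mu_n|_{\gamma_2}$ against the product $gf_{\gamma_2}$.
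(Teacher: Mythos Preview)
Your proof is correct and is essentially the same as the paper's: both disintegrate $\nu_n$ along leaves, add and subtract an intermediate term, bound one piece by $2\ell\,W_1^0(\mu_n|_{\gamma_1},\mu_n|_{\gamma_2})$ via the Lipschitz/sup control on $gf$, and bound the other by $\ell|\gamma_1-\gamma_2|\sup_\gamma(\mu_n|_\gamma(I))\le\ell|\gamma_1-\gamma_2|(K+1)$ using Remark \ref{11}. The only cosmetic difference is which cross term is inserted (you interpolate through $\int gf_{\gamma_2}\,d(\mu_n|_{\gamma_1})$, the paper through $\int gf_{\gamma_1}\,d(\mu_n|_{\gamma_2})$); the resulting estimate $\operatorname{Var}(G_{\nu_n})\le 2\ell K+\ell(K+1)=3\ell K+\ell$ is identical.
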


\begin{proof}
let $\gamma _{1}$ and $\gamma _{2}$ be two close leaves%
\begin{equation*}
W_{1}^{0}(\nu _{n}|_{\gamma _{1}},\nu _{n}|_{\gamma _{2}})=\sup_{g\in
b1lip}\left\vert \int_{\gamma _{1}}g(\ast )f(\ast ,\gamma _{1})d(\mu
_{n}|_{\gamma _{1}})-\int_{\gamma _{2}}g(\ast )f(\ast ,\gamma _{2})d(\mu
_{n}|_{\gamma _{2}})\right\vert
\end{equation*}%
we recall that $|g|\leq 1$, hence%
\begin{gather*}
\left\vert \int g(\ast )f(\ast ,\gamma _{1})~d(\mu |_{\gamma _{1}})-\int
g(\ast )f(\ast ,\gamma _{2})~d(\mu |_{\gamma _{2}})\right\vert \leq \\
\leq \left\vert \int g(\ast )f(\ast ,\gamma _{1})~d(\mu |_{\gamma
_{1}})-\int g(\ast )f(\ast ,\gamma _{1})~d(\mu |_{\gamma _{2}})\right\vert +
\\
+\left\vert \int g(\ast )f(\ast ,\gamma _{1})~d(\mu |_{\gamma _{2}})-\int
g(\ast )f(\ast ,\gamma _{2})~d(\mu |_{\gamma _{2}})\right\vert \leq
\end{gather*}%
\begin{equation*}
\leq 2\ell \cdot W_{1}^{0}(\mu _{n}|_{\gamma _{1}},\mu _{n}|_{\gamma
_{2}})+\ell |\gamma _{1}-\gamma _{2}|\sup_{\gamma }(\mu |_{\gamma }(I))
\end{equation*}%
hence $Var(G_{\nu _{n}})\leq 2\ell K+\ell (K+1).$
\end{proof}

\begin{remark}
\label{123}If $\mu _{n}$ is $K$-good for each $n$ and $\overline{g}%
_{n}:I\rightarrow I$ is the marginal, such that%
\begin{equation*}
\overline{g}_{n}(\gamma )=\mu _{n}|_{\gamma }(I),
\end{equation*}%
since $|\overline{g}_{n}(\gamma _{1})-\overline{g}_{n}(\gamma _{2})|\leq
W_{1}^{0}(\mu _{n}|_{\gamma _{1}},\mu _{n}|_{\gamma _{2}})$ then it holds 
\begin{equation}
Var(\overline{g}_{n})\leq K
\end{equation}%
for each $n$.
\end{remark}

\begin{remark}
\label{456}If $\mu _{n}\rightarrow \mu $ and $\nu _{n}=f\mu _{n}$ , $\nu
=f\mu $ with $f$ be $\ell $-Lipschitz then $\nu _{n}\rightarrow \nu .$(this
is easily obtained because $\int hf~d\mu _{n}\rightarrow \int hf~d\mu $, for
each continuous $h$ since $hf$ is continuous).
\end{remark}

We are finally ready to end the proof of the main proposition of the section.

\begin{proof}
(of Lemma \ref{resdue}) We prove that $\overline{f}$ as defined at item 3 of
Theorem \ref{resuno} has bounded variation and $Var(\overline{f})\leq 3\ell
K^{\prime }+\ell $, where $\ell $ is the Lipschitz constant of $f$ and $K^{\prime }$ does not depend on $f$. Let $\mu _{n}=F^{\ast n}(m)$ be the sequence of iterates of the Lebesgue measure $m$.
By Theorem \ref{bon} these are $K^{\prime }$-good. 
By Proposition \ref{prod}, Equation \ref{l1bv} and uniform contraction on unstable leaves it follows  $\mu _{n}\rightarrow \mu $ in the weak topology. 
Then for each continuous $h$ it holds $\mu
_{n}(h)\rightarrow \mu (h)$. In particular this holds for the functions
which are constant on each contracting leaf. Let $h$ be such a function.
Then $\int_{\Sigma }h~d\mu _{n}=\int_{I}h\overline{g}_{n}dx$ where $%
\overline{g}_{n}(x)=\mu _{n}|_{\gamma _{x}}(I)$ are the densities of $\mu
_{n}$ on the $x$ axis as in Remark \ref{123}.

Let $f$ be $\ell $-Lipschitz, $\nu _{n}=f\mu _{n}$ and $\nu =f\mu $ as
required by Lemma \ref{resdue}. Since $h$ is constant along the leaves,
again $\int_{\Sigma }h~d\nu =\int_{I}h\overline{f}dx$ and $\int_{\Sigma
}h~d\nu _{n}=\int_{I}h\overline{f}_{n}dx$ where $\overline{f}_{n}(\gamma
)=\int_{\gamma }f~d(\mu _{n}|_{\gamma })$ as above. By Remark \ref{456} 
\begin{equation*}
\int_{\Sigma }h~d\nu _{n}\rightarrow \int_{\Sigma }h~d\nu 
\end{equation*}%
hence 
\begin{equation*}
\int_{I}h\overline{f}_{n}dx\rightarrow \int_{I}h\overline{f}dx.
\end{equation*}

We have to prove that $\overline{f}$ is BV. By Lemma \ref{798} the measures $%
\nu _{n}$ are $(3\ell K^{\prime }+\ell )$-good. Now by Remark \ref{123}, $%
Var(\overline{f}_{n})\leq 3\ell K^{\prime }+\ell $ . By the Helly theorem
there is a sub-sequence $\overline{f}_{n_{i}}$ converging in the $L^{1}$
norm to some bounded variation function $\tilde{f}$ such that $Var(\tilde{f}%
)\leq 3\ell K^{\prime }+\ell $.

Hence $\int h\overline{f}_{n_{i}}dx\rightarrow \int h\tilde{f}dx$ for each $%
h $ as above and so $\int h\overline{f}dx=\int h\tilde{f}dx$ for each
continuous $h$ and then this implies that they coincide a.e.. Hence $%
\overline{f}$ can be supposed to be BV and having $Var(\overline{f})\leq
3\ell K^{\prime }+\ell $ .
\end{proof}



\section{Appendix II: Exact dimensionality}
\label{sec:steinberg}
\label{sec:muexata}

In several of the above results we used the local dimension of the system at certain points.
In this section we  recall a result of of Steinberger (\cite{S00}) about the local dimension of Lorenz like systems and prove that for the geometric Lorenz system  the local dimension is defined at almost every point. Let us recall the assumptions used in \cite{S00} .

Let us consider a map $F:[0,1]^2\to [0,1]^2$, $F(x,y)=(T(x),g(x,y))$ where
\begin{enumerate}
 \item[(1)] $T:[0,1]\to [0,1]$ is piecewise monotonic.
  This means that there are $c_i\in [0,1]$ for $0\leq i\leq N$ with
$0=c_0< \cdots < c_N=1$ such that $T|(c_i,c_{I+1})$ is continuous and
monotone for $0\leq i < N$. Furthermore, for $0\leq i< N$, $T|(c_i,c_{i+1})$ is $C^1$
and that $\inf_{x\in \cS}|T'(x)|> 0$ holds where $\cP=[0,1]\setminus \cup_{0\leq i < N}c_i$.
\item[(2)] $g:[0,1]^2\to (0,1)$ is $C^1$ on $\cP\times [0,1]$. Furthermore,
$\sup|\partial g/\partial x| < \infty$, $\sup|\partial g/\partial y| < 1$ and
$|(\partial g/\partial y)(x,y)| > 0$ for $(x,y)\in \cP\times [0,1]$.
\item[(3)] $F((c_i,c_{i+1})\times [0,1])\cap F((c_i,c_{i+1})\times [0,1])=\emptyset$
for distinct $i, j$ with $0\leq i, j < N$.
\end{enumerate}

%

Now consider the projection $\pi_{x}:I^2\to I$, set $\cV=\{(-1/2,0), (0,1/2)\}$ and $\cV_k=\bigvee_{i=0}^{k}f^{-i}\cV$, which is a partition of
$E=\bigcap_{i=0}^\infty H^{-1}(I\setminus \{0\})$ into open intervals.
For $x\in E$ let $J_k(x)$ be the unique element of $\cV_k$ which contains $x$.
We say that $\cV$ {\em is a generator} if the length of the intervals
$J_k(x)$ tends to zero  for $n \to \infty$  for any  given $x$.
Set
$$\psi(x,y)=\log|T'(x)| \quad \mbox{and} \quad \varphi(x,y)=-\log|(\partial g/\partial y)(x,y)|.$$

The result of Steinberger that we shall use is the following

\begin{theorem}\cite[Theorem 1]{S00}
\label{th:Steinberg} Let $F$ be a two-dimensional map as above and $\mu$ an ergodic, $F$-invariant probability measure on $I^2$ with the entropy $h_{\mu}(F)>0$.
Suppose $\cV$ is a generator, $\int\psi\cdot d\mu_F < \infty$ and $0<\int \varphi d\mu_F < \infty$.
If the maps $y\mapsto \psi(x,y)$ are uniformly equicontinuous for $x \in I\setminus\{0\}$
and $1/|f'|$ is BV then
$$
d_{\mu}(x,y)=h_{\mu}(F)\big(\frac{1}{\int\psi\cdot d\mu} + \frac{1}{\int \varphi\cdot d\mu}\big)
$$
for $\mu$-almost all $(x,y) \in I^2$.
\end{theorem}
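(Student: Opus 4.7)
The plan is to follow the Ledrappier--Young strategy for exact dimensionality of hyperbolic ergodic measures on surfaces, adapted to this piecewise smooth skew-product setting; the statement is due to Steinberger and the full proof is in \cite{S00}, so I sketch only the broad strategy. The expanding direction is the $x$-axis with Lyapunov exponent $\chi^+=\int\psi\,d\mu$, while the vertical foliation is uniformly contracted with exponent $-\chi^-=-\int\varphi\,d\mu$; the formula then recognizes the total local dimension as the sum $h_\mu/\chi^+ + h_\mu/\chi^-$ of Ledrappier--Young contributions from the two directions.

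First I would disintegrate $\mu$ along the invariant vertical foliation, $\mu=\int\mu_\gamma\,d\mu_x$. The BV hypothesis on $1/|T'|$ places $T$ in the Rychlik--Hofbauer class, so the marginal $\mu_x$ is absolutely continuous with BV density, and Rokhlin's entropy formula yields $h_{\mu_x}(T)=\int\psi\,d\mu$. Because fibers are uniformly contracted, the Abramov--Rokhlin formula gives $h_\mu(F)=h_{\mu_x}(T)$, so the unstable-direction contribution equals $h_\mu(F)/\int\psi\,d\mu=1$, in agreement with $\mu_x$ being a.c. on the base.

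For the stable contribution I would combine the generator $\cV_k$ with the vertical contraction: after $n\sim (\log(1/r))/\chi^-$ iterates, a ball of vertical radius $r$ around $(x,y)$ contains a $\cV_k$-cylinder of $\mu$-measure $\sim e^{-n h_\mu}$ by Shannon--McMillan--Breiman, giving $\mu_\gamma(B_r^y(x,y))\sim r^{h_\mu/\chi^-}$ and hence fiber local dimension $h_\mu/\int\varphi\,d\mu$. A Fubini-type comparison of metric balls on $\Sigma$ with the horizontal--vertical product of these cylinders then yields the claimed pointwise formula for $d_\mu(x,y)$ almost everywhere.

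The hard part is controlling the singular fiber $\{x=c\}$, where $T'$ blows up and the match between metric balls and Bowen balls breaks down. The uniform equicontinuity of $y\mapsto\psi(x,y)$, trivially satisfied here since $\psi$ depends only on $x$, together with the BV control on $1/|T'|$ and the integrability of $\psi,\varphi$, are precisely the ingredients that let one truncate outside a small neighborhood of the singularity, apply Brin--Katok-type pointwise entropy formulas on the truncated set (using that $\cV$ is a generator), and pass to the limit. Verifying the generator condition and the integrability of $\psi$ near $c$, where $T'(x)\sim|x-c|^{\alpha-1}$, will in turn be the delicate step when applying the theorem to the geometric Lorenz model.
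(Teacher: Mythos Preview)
The paper does not prove this theorem at all: it is quoted verbatim from Steinberger \cite{S00} and used as a black box, the surrounding text only checking that the geometric Lorenz first return map satisfies its hypotheses (Proposition~\ref{pro:steinberg}). So there is no proof in the paper to compare your sketch against.

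Your outline is broadly in the right spirit (Ledrappier--Young decomposition into an unstable and a stable contribution), but it contains a genuine error. The theorem is stated for an \emph{arbitrary} ergodic $F$-invariant measure $\mu$ with positive entropy, yet you assert that the marginal $\mu_x$ is absolutely continuous with BV density and then invoke Rokhlin's formula to get $h_{\mu_x}(T)=\int\psi\,d\mu$, concluding that the unstable contribution $h_\mu(F)/\int\psi\,d\mu$ equals $1$. The BV hypothesis on $1/|T'|$ only guarantees the \emph{existence} of an a.c.i.m.\ for $T$; it says nothing about the marginal of a general ergodic $\mu$. For a non-SRB measure one typically has $h_\mu(F)<\int\psi\,d\mu$ and an unstable dimension strictly less than $1$, so the formula $d_\mu=h_\mu(1/\int\psi+1/\int\varphi)$ does not reduce to $1+h_\mu/\int\varphi$. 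What is actually needed for the base direction is the Hofbauer--Raith type result that $d_{\mu_x}(x)=h_{\mu_x}(T)/\int\log|T'|\,d\mu_x$ for ergodic measures of piecewise monotonic interval maps, established via the Markov extension and the generator partition $\cV$; this is where the hypotheses that $\cV$ is a generator and $1/|T'|$ is BV enter in Steinberger's argument, not through Rychlik's theory of the a.c.i.m.
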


Now we verify that the Lorenz geometric system as defined in Section
\ref{sec:SBRfluxo} is exact dimensional. First we observe that for
the first return map $F:\Sigma\setminus \Gamma\to \Sigma$ associated
to the Lorenz geometric flow  its entropy $h_\mu(F) > 0$, see
\cite[pp.188]{APbook,APPV08}. Next, equations (\ref{gy}),
(\ref{gx}), and the properties of $f_{Lo}$ described in Subsections
\ref{asegundacoordenada} and \ref{sec:propert-one-dimens} guaranty
that $F=(f_{Lo},g_{Lo})$ is a two-dimensional transformation
satisfying the above points (1--3). So, all we need to prove that
$(\Sigma,F,d\mu_F)$ is exact dimensional is to verify that
$F(x,y)=(f_{Lo}(x),g_{Lo}(x,y))$ satisfies the hypothesis of
Theorem \ref{th:Steinberg}. For this, let
$$\psi(x,y)=\log|f_{Lo}'(x)| \quad \mbox{and} \quad \varphi(x,y)=-\log|(\partial g_{Lo}/\partial y)(x,y)|.$$
Then the following result holds:

\begin{proposition}
\label{pro:steinberg}
 For $q=(x,y) \in \Sigma^*$, let $\varphi(q)=-\log|\partial g_{Lo}/\partial y (q)|$
and $\psi(q)=\log|f_{Lo}'(x)|$. Then
\begin{enumerate}
 \item $\int \varphi d\mu_F < \infty$,
\item $0< \int \psi d\mu_F < \infty$, and
\item the maps $y \mapsto \varphi(x,y)$ are uniformly equicontinuous
for $x\in I\setminus\{0\}$.
\end{enumerate}
where $\mu_F$ is the invariant ergodic SRB measure described in Subsection \ref{sec:SBRfluxo}.
\end{proposition}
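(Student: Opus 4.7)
\smallskip
\noindent\textbf{Proof plan.} The plan is to exploit the very explicit formulas for $g_{Lo}$ and $f_{Lo}'$ given in Section~\ref{exprassaodeF} together with the fact, recorded in Proposition~\ref{prop:densidadeBVdef}, that the marginal of $\mu_F$ on the $x$--axis is $\mu_{f_{Lo}}$, which is absolutely continuous with respect to Lebesgue with a bounded (BV) density. This reduces every integral that appears to a one--variable Lebesgue integral against a bounded weight, which I can estimate by elementary means.

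First I would compute the integrands explicitly. From \eqref{gLo} the map $g_{Lo}$ has the form $g_i(x^\alpha,y x^\beta)$ with $g_i$ affine, so differentiating in $y$ gives $\partial g_{Lo}/\partial y(x,y) = c_{\pm}\, |x|^{\beta}$ for a nonzero constant $c_{\pm}$ depending only on the sign of $x$. Hence
\[
\varphi(x,y) = -\beta\log|x| - \log|c_{\pm}|,
\]
and $\varphi$ depends only on $x$. Similarly, by \eqref{fLo}, on each branch $f_{Lo}'(x) = \theta\,\alpha\,|x|^{\alpha-1}$ (up to sign), so $\psi(x,y)=\log|f_{Lo}'(x)|=(\alpha-1)\log|x|+\log(\theta\alpha)$ depends only on $x$.

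Next I would prove (1) and (2) using only that the marginal of $\mu_F$ on $I$ has a density $h\in L^\infty$. Writing $M=\|h\|_\infty$, we get
\[
\int \varphi\, d\mu_F \;\le\; M\!\!\int_{-1/2}^{1/2}\!\big(-\beta\log|x|+|\log|c_\pm||\big)\,dx \;<\;\infty,
\]
since $\log|x|$ is Lebesgue--integrable on $I$, proving (1). For (2), the bound $f_{Lo}'>1$ on $I\setminus\{0\}$ (property (f2) in Section~\ref{sec:propert-one-dimens}) gives $\psi>0$ pointwise, hence $\int\psi\,d\mu_F>0$; and $|\alpha-1|<1$ together with the same Lebesgue--integrability of $\log|x|$ gives $\int\psi\,d\mu_F\le M\!\int_I\!|(\alpha-1)\log|x|+\log(\theta\alpha)|\,dx<\infty$.

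Finally, (3) is immediate from the computation in the first step: since $\partial g_{Lo}/\partial y$ does not depend on $y$, the map $y\mapsto \varphi(x,y)$ is constant for each fixed $x\in I\setminus\{0\}$, and a family of constant maps is uniformly equicontinuous (trivially, with modulus of continuity $\omega\equiv 0$). There is essentially no obstacle in this proposition; the only point requiring a little care is observing that the affine structure of $g_i$ is what makes $\partial_y g_{Lo}$ independent of $y$, so that item~(3) — which would otherwise be the delicate one — becomes automatic.
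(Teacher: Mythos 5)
Your proposal is correct and follows essentially the same route as the paper: compute $\partial_y g_{Lo}= c_\pm|x|^\beta$ and $f_{Lo}'=\theta\alpha|x|^{\alpha-1}$ explicitly, reduce both integrals to one--dimensional integrals of $\log|x|$ against the bounded marginal density of $\mu_F$, and observe that $\varphi$ is independent of $y$ so that (3) is trivial. The only cosmetic difference is that you make the positivity in (2) explicit via $f_{Lo}'>1$, which the paper leaves implicit.
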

\begin{proof}

Given $q=(x,y)\in [-1/2,1/2]^2$,  we provide the calculations for $x>0$, the other case
being analogous.

By equation (\ref{eq:derivadaF}) we have
\begin{align*}
 DF(x,y)= \left(
\begin{array}{cccc}
    \partial_xf_{Lo}  & \partial_yf_{Lo}   \\
    \partial_xg_{Lo}  & \partial_yg_{Lo}
\end{array}
\right)
=
\left(
\begin{array}{cccc}
   M\cdot \alpha\cdot x^{(\alpha-1)} &  0
  \\
  \sigma\cdot \beta\cdot
  yx^{(\beta-\alpha)}
  & \sigma x^\beta
\end{array}
\right).
\end{align*}

\noindent {\em Proof of (1):}
By the expression above we have $\partial g_{Lo}/ \partial y(q)=\sigma\cdot x^\beta$ and so
 $\log|\partial g_{Lo}/ \partial y(q)|=\log|\sigma\cdot x^\beta|$ does not depend on $y$.
Since the measure $\mu_F$ is constant at each leaf $\ell \in \cFŝ$ and the projection
of $\mu_F$ on the $x$-axis, $\mu_{f_{Lo}}$, is absolutely continuous with respect to Lebesgue
measure (and even has a finite density), see Proposition \ref{prop:densidadeBVdef}, we immediately conclude that
$$
\int \log|\partial g_{Lo}/ \partial y(q)| d\mu_F < \infty.
$$
proving (1).

\noindent {\em Proof of (2):}
Again from the expression for $DF(x,y)$ above we get $f_{Lo}'(x)=M\cdot \alpha\cdot x^{(\alpha-1)}$, recall $0< \alpha<1$.
Hence, for $0< x < 1/2$, $\log(f_{Lo}'(x))=\log(M\cdot \alpha\cdot x^{(\alpha-1)})$.
Thus
$$ 0< \int \log(f_{Lo}'(x)) d\mu_F \leq K_0 + (\alpha - 1) [x\cdot \log(x) - x] \leq K_0 + (\alpha -1) K_1,
$$
proving (2).

\noindent {\em Proof of (3)}

Note that $\varphi(x,y)=-\log|(\partial g_{Lo}/ \partial y)(x,y)|= \log (\sigma) + \beta\cdot \log(|x|)$ and
so the maps $y \mapsto \varphi(x,y)$ are obviously uniformly equicontinuous for $x\neq 0$.

All together finishes the proof of Proposition \ref{pro:steinberg} establishing that $\mu_F $ is exact dimensional.
\end{proof}

\end{document}